\def\clap#1{\hbox to 0pt {\hss#1\hss}}
\def\mathclap{\mathpalette\mathclapinternal}
\def\mathclapinternal#1#2{%
	\clap{$\mathsurround=0pt#1{#2}$}%
}
\newtheorem{theorem}{Theorem}[section]
\newtheorem{corollary}[theorem]{Corollary}
\newtheorem{proposition}[theorem]{Proposition}
\newtheorem{lemma}[theorem]{Lemma}
\newtheorem{conjecture}[theorem]{Conjecture}
\theoremstyle{definition}
\newtheorem{definition}[theorem]{Definition}
\newcommand{\set}[1]{\mathbb #1}
\newcommand{\abs}[1]{\left|#1\right|}
\newcommand{\Abs}[1]{\left\|#1\right\|}
\newcommand{\supnorm}[1]{\left\|#1\right\|_{\infty}}
\newcommand{\Lip}[1]{\mathrm{Lip}(#1)}
\newcommand{\Cone}{C_b^1(M)}
\newcommand{\balg}[1]{\mathcal B\big(#1\big)}
\newcommand{\Buc}{B}
\newcommand{\muuc}{\mu}
\newcommand{\Auc}{A}
\newcommand{\Iuc}{I}
\newcommand{\Bc}{B_g}
\newcommand{\muc}{{\mu_g}}
\newcommand{\Ac}{A_g}
\newcommand{\mucae}[1]{$\muc$\hspace{1mm}-\hspace{1mm}a.e. #1}
\newcommand{\hc}[1][]{{h_g^{#1}}}
\newcommand{\Ic}{I_g}
\newcommand{\Ip}{I_p}
\newcommand{\mup}{\muc_{\!\scriptscriptstyle\lambda}}
\newcommand{\hcp}[1][]{{h_g}_{\!\scriptscriptstyle\lambda}^{#1}}
\newcommand{\gp}{g_\lambda}
\title{Coupled skinny baker's maps and the Kaplan-Yorke conjecture}
\author{
Maik Gröger\thanks{groeger@math.uni-bremen.de; Fachbereich 3 - Mathematik,
Universität Bremen, Bibliothekstraße 1, 28359 Bremen, Germany}
\and
Brian R. Hunt\thanks{bhunt@umd.edu; Department of Mathematics and Institute for
Physical Science and Technology, University of Maryland, College Park MD 20742,
USA}}
\date{28. February 2013}
\begin{document}
\renewcommand\dagger{**}
\maketitle
\let\thefootnote\relax\footnotetext{2010 Mathematics Subject Classification.
	Primary 37C45; Secondary 37C40, 37D45, 37C20 and 28C20.}
\begin{abstract}
	The Kaplan-Yorke conjecture states that for ``typical'' dynamical systems
	with a physical measure, the information dimension and the Lyapunov dimension
	coincide.
	We explore this conjecture in a neighborhood of a system for which the two
	dimensions do not coincide because the system consists of two uncoupled
	subsystems.
	We are interested in whether coupling ``typically'' restores the equality of
	the dimensions. 
	The particular subsystems we consider are skinny baker's maps, and we
	consider uni-directional coupling.  For coupling in one of the possible
	directions, we prove that the dimensions coincide for a prevalent set of
	coupling functions, but for coupling in the other direction we show that the
	dimensions remain unequal for all coupling functions.
	We conjecture that the dimensions prevalently coincide for bi-directional
	coupling.
	On the other hand, we conjecture that the phenomenon we observe for a
	particular class of systems with uni-directional coupling, where the
	information and Lyapunov dimensions differ robustly, occurs more generally
	for many classes of uni-directionally coupled systems (also called
	skew-product systems) in higher dimensions.
\end{abstract}

\section {Introduction}

In 1979, Kaplan and Yorke introduced a quantity that is known today as the
Lyapunov or Kaplan-Yorke dimension, denoted by $D_L$ in the following. 
It is defined in terms of the Lyapunov exponents of a differentiable map.
In \cite{KaplanYorke1979} they conjectured that for an attractor of a 
``typical'' dynamical system in a Euclidean space, the box-counting dimension of
the attractor is equal to  $D_L$.
In \cite{FarmerOttYorke1983} and \cite{FredericksonKaplanYorkeYorke1983} the
conjecture was refined to replace the box-counting dimension with several
suitably defined dimensions of the physical measure on the attractor; part of
the conjecture is that ``typically'' these definitions are well-defined and all
coincide with the Lyapunov dimension of this measure.
Nowadays, this conjecture is called the Kaplan-Yorke conjecture.
The most commonly used dimension included in the conjecture is the
information dimension, which we will also use here and denote by $D_1$.
We call the conjectured equation $D_1=D_L$ the Kaplan-Yorke equality.

A physical measure is an invariant probability measure that is ``observable''
from a positive Lebesgue measure set of initial conditions, and thus should be
(approximately) observable in an experiment or in a computer simulation.
An attractor can have many different invariant measures, each with its own
Lyapunov exponents and information dimension, and for arbitrary invariant
measures $D_1$ and $D_L$ generally do not coincide.
For example, the invariant measure supported on an unstable periodic orbit has
$D_1 = 0$ but $D_L>0$.
Existence of physical measures is widely assumed based on empirical
evidence, but known rigorously only in limited cases.
We will consider the Kaplan-Yorke conjecture to apply only to systems for which
physical measures exist.
Common examples of physical measures are so-called SRB measures, see
\cite{Young2002}.
Some authors use the term SRB measure for physical measure, see also 
\cite{Young2002} for a short discussion.
We are following the terminology of this reference.

Under the assumption that the Kaplan-Yorke equality holds, $D_1$ can be
obtained by calculating $D_L$, which is determined by dynamical quantities,
namely, the Lyapunov exponents.
Generally, numerically estimating $D_L$ is easier than estimating $D_1$ directly,
especially in higher dimensions.
By estimating $D_1$, we get geometric measure theoretic information about the
complexity of the attractor.
Further, we quantify the amount of information that is necessary
to specify the state of a system to a certain accuracy.

The Kaplan-Yorke conjecture is broad in the sense that it does not specify a
precise class of systems to which it should apply, and it does not specify what
exactly ``typical'' means.
The conjecture has been proved for some specific classes of systems, for
example, in \cite{Young1982} for surface diffeomorphisms with an SRB measure,
in \cite{LedrappierYoung1988} for compositions of random diffeomorphisms, and in
\cite{AlexanderYorke1984} or \cite{KaplanMallet-ParetYorke1984} for certain
systems depending on a finite number of parameters.

A simple class of systems where the Kaplan-Yorke equality typically does not hold
consists of systems that can be decomposed into two or more uncoupled subsystems.
Indeed, the starting point for this article is such a system.
It consists of two 2-dimensional skinny baker's maps and is given by
\begin{align*}
	x_{n+1}&=2x_n\mod 1\notag\\
	y_{n+1}&=
	\begin{cases}
		\alpha y_n&\textnormal{ if }0\leq x_n<\frac{1}{2}\\
		\alpha y_n+1-\alpha&\textnormal{ if }\frac{1}{2}\leq x_n<1,
	\end{cases}\notag\\
	z_{n+1}&=2z_n\mod 1\\
	w_{n+1}&=
	\begin{cases}
		\beta w_n&\textnormal{ if }0\leq z_n<\frac{1}{2}\\
		\beta w_n+1-\beta&\textnormal{ if }\frac{1}{2}\leq z_n< 1
	\end{cases}\notag
\end{align*}
with $0<\alpha,\beta<\frac{1}{2}$.
For this uncoupled system, we have that the information dimension $D_1$ is
strictly less than the Lyapunov dimension $D_L$, except when $\alpha=\beta$.
That means the conjectured equality does not hold for Lebesgue almost every
$(\alpha,\beta)\in\big(0,\frac{1}{2}\big)^2$.
The reason for this is that $D_1$ is additive but not $D_L$, in the sense of
getting the dimension of the uncoupled system by adding the dimensions of the
subsystems.

Now, the question arises whether we can find a larger class of dynamical systems
that contains the systems described above but where the Kaplan-Yorke equality is 
typically valid.
Because of the independent behavior of the subsystems, coupling seems to be
the natural way to find this larger class of dynamical systems.
In our case we choose the following form of coupling
\begin{align}
	\label{coupled_skinny_bakers_maps}
	x_{n+1}&=2x_n\mod 1\notag\\
	y_{n+1}&=
	\begin{cases}
		\alpha y_n+f(z_n,w_n)&\textnormal{ if }0\leq x_n<\frac{1}{2}\\
		\alpha y_n+1-\alpha+f(z_n,w_n)&\textnormal{ if }\frac{1}{2}\leq x_n<1,
	\end{cases}\notag\\
	z_{n+1}&=2z_n\mod 1\\
	w_{n+1}&=
	\begin{cases}
		\beta w_n+g(x_n,y_n)&\textnormal{ if }0\leq z_n<\frac{1}{2}\\
		\beta w_n+1-\beta+g(x_n,y_n)&\textnormal{ if }\frac{1}{2}\leq z_n< 1
	\end{cases}\notag
\end{align}
where $0<\alpha,\beta<\frac{1}{2}$ and where $g$ and $f$ are $C_b^1$.
The uncoupled system corresponds to $f=g=0$.	
More generally, we could add coupling terms to the $x$- and $z$-equations, but we
think the class of systems \eqref{coupled_skinny_bakers_maps} is sufficiently
rich.
We consider the Kaplan-Yorke conjecture for $f$ and $g$ that are typical in
the sense of prevalence \cite{HuntSauerYorke1992}, described in Section 2.
Our results are for the case where one of these two functions is zero, though as
we discuss below, we think this gives a good indication of the situation when
both are nonzero.
Interestingly, our results depend on which coupling function is nonzero (and the
relative size of $\alpha$ and $\beta$).
They suggest that the usual definition of Lyapunov dimension is not always
appropriate in the case of uni-directionally coupled (skew-product) systems.

Our main result is as follows.
\begin{theorem}\label{main_theorem}
	For the system \eqref{coupled_skinny_bakers_maps}, if $f=0$ and $g$ is
	$C_b^1$, then for
	\begin{enumerate}
		\item[(i)] $\alpha>\beta$: $D_1<D_L$ for all $g$,
		\item[(ii)] $\alpha=\beta$: $D_1=D_L$ for all $g$,
		\item[(iii)] $\alpha<\beta$: $D_1=D_L$ for a prevalent set of $g$'s.
	\end{enumerate}
\end{theorem}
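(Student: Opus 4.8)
The plan is to exploit the skew-product structure of \eqref{coupled_skinny_bakers_maps} when $f=0$: the pair $(x,y)$ evolves autonomously as a skinny baker's map (with SRB measure $\mu_{xy}=\mathrm{Leb}\times\nu_\alpha$, $\nu_\alpha$ the self-similar measure of $\{y\mapsto\alpha y,\ y\mapsto\alpha y+1-\alpha\}$), $z$ evolves as the doubling map with Lebesgue measure, and $w$ is merely driven. Writing $F$ for the baker map on $(x,y)$ and $\Gamma:=\sum_{j\ge 1}\beta^{j-1}\,g\circ F^{-j}$ for the unique bounded solution of $\Gamma\circ F=g+\beta\Gamma$, one checks that the physical measure is $\mu=H_*\big(\mu_{xy}\times\mathrm{Leb}_z\times\nu_\beta\big)$ with $H(x,y,z,s)=(x,y,z,\Gamma(x,y)+s)$, and that conditionally on $(x,y,z)$ the $w$-coordinate is distributed as the translate $\Gamma(x,y)+\nu_\beta$. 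Since the derivative of \eqref{coupled_skinny_bakers_maps} is block-lower-triangular with diagonal $(2,\alpha,2,\beta)$, the Lyapunov exponents are $\log 2,\log 2,\log\alpha,\log\beta$ independently of $g$; a direct computation gives $D_L\ge 2+d_\alpha+d_\beta$ with equality iff $\alpha=\beta$ (here $d_\alpha=\log 2/\log(1/\alpha)$, $d_\beta=\log 2/\log(1/\beta)$), and in all cases $D_1\le D_L$ by the standard Kaplan--Yorke inequality. Finally, $\Gamma$ is Lipschitz in $(x,y)$ precisely when $\beta<\alpha$, while for $\beta\ge\alpha$ it is Hölder with exponent $\rho:=\log(1/\beta)/\log(1/\alpha)$ (note $d_\alpha/\rho=d_\beta$), being genuinely no smoother than this, generically, when $\alpha<\beta$.

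Cases (i) and (ii) then follow quickly. If $\alpha>\beta$, $H$ is bi-Lipschitz, so $\mu$ is exact dimensional with $D_1=\dim_H\mu=\dim_H\big(\mu_{xy}\times\mathrm{Leb}_z\times\nu_\beta\big)=2+d_\alpha+d_\beta<D_L$. If $\alpha=\beta$, I would use only the Fubini-type lower bound $\dim_H\mu\ge\dim_H(\mu_{xy}\times\mathrm{Leb}_z)+\dim_H\nu_\beta=2+d_\alpha+d_\beta$, which equals $D_L$ here; together with $\dim_H\mu\le D_1\le D_L$ all four quantities coincide.

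Case (iii), $\alpha<\beta$, is the substantial one, and since $D_1\le D_L$ is automatic everything reduces to proving $\dim_H\mu\ge D_L$ for a prevalent set of $g$. I would first pass to a marginal: $\dim_H\mu\ge\dim_H(\pi_{xzw}{}_*\mu)$, and since $z$ is independent of $(x,w)$ with $x$ Lebesgue, this is at least $2+\operatorname{ess\,inf}_x\dim_H\big((\Gamma_x)_*\nu_\alpha*\nu_\beta\big)$, where $\Gamma_x=\Gamma(x,\cdot)$ and the conditional law of $w$ given $x$ is this convolution (given $x$, the Bernoulli digits producing $y$ and those producing the $\nu_\beta$-part of $w$ are independent). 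So it suffices to show that for prevalent $g$ this convolution has the maximal possible dimension, namely $\min(1,2d_\beta)=D_L-2$ when $\beta\le\tfrac14$; when $\beta>\tfrac14$ one instead needs this convolution to be absolutely continuous and then, by an analogous argument applied to the disintegration of $\mu$ over $(x,z,w)$, that the conditional dimension of $y$ given $(x,z,w)$ equals the remaining summand $(2\log 2-\log(1/\beta))/\log(1/\alpha)$ of $D_L$. For prevalence I would invoke the Hunt--Sauer--Yorke criterion with a finite-dimensional probe $g\mapsto g+t_1\phi_1+\dots+t_k\phi_k$, reducing to attainment of the target dimension for Lebesgue-a.e.\ $(t_1,\dots,t_k)$ and a.e.\ $x$; here $\Gamma^{(t)}=\Gamma^{(0)}+\sum_i t_i\Psi_i$ with $\Psi_i=\sum_j\beta^{j-1}\phi_i\circ F^{-j}$, so the problem becomes a ``for almost every parameter'' dimension estimate, handled by the potential-theoretic method: bound $\int I_s(\sigma_{t,x})\,dt$ for $s$ below the target and $t$ in a bounded box, using a transversality estimate $\mathrm{Leb}\{t\in J^k:|c+\sum_i t_i c_i|<r\}\lesssim_J r/\max_i|c_i|$ together with the Ahlfors $d_\beta$-regularity of $\nu_\beta$ (valid since $\beta<\tfrac12$), so that finiteness of the energy is reduced to the dimensions of $\nu_\beta$ and of the auxiliary measures $(\Psi_i)_*\nu_\alpha$.

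I expect the last step to be the main obstacle. Two points are delicate: choosing the probes $\phi_i$ so that $(\Psi_i)_*\nu_\alpha$ is genuinely as spread out as the Hölder exponent allows (each $\Psi_i$ is again a $\rho$-Hölder summed cocycle, so $(\Psi_i)_*\nu_\alpha$ can have dimension at most $d_\alpha/\rho=d_\beta$, and one must ensure this bound is achieved --- e.g.\ for $\phi_i$ linear in $y$ one gets explicit generalized Bernoulli convolutions whose weights eventually satisfy the open set condition because $\beta<\tfrac12$); and, more seriously, controlling the interaction between the scaling ratio $\beta$ built into $\Gamma$ (via $\Gamma\circ F=g+\beta\Gamma$) and the scaling ratio $\beta$ of $\nu_\beta$ itself, which is exactly the resonance that a typical coupling $g$ must destroy in order for $(\Gamma_x)_*\nu_\alpha*\nu_\beta$ to realize its full dimension. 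This is where the prevalence hypothesis (rather than a single explicit $g$) is essential, and where the parameter transversality does its real work.
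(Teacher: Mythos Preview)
Your treatment of cases (i) and (ii) is essentially the paper's: the conjugacy $H$ (the paper's $h_g$) is bi-Lipschitz for $\alpha>\beta$ and bi-H\"older with every exponent $\rho<1$ for $\alpha=\beta$, which pins down $D_1$ exactly. One caveat: you invoke ``the standard Kaplan--Yorke inequality $D_1\le D_L$'' as if it were automatic, but for a piecewise-smooth system this needs proof. The paper handles it for $\alpha\ge\beta$ via the conjugacy itself (so $D_1(\mu_g)=D_1(\mu)=2+d_\alpha+d_\beta$ is computed directly, no inequality needed), and for $\alpha<\beta$ via a separate covering argument giving $\overline D_B(A_g)\le D_L$.

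For case (iii) your route diverges from the paper's and, as you yourself anticipate, runs into real trouble. The paper does \emph{not} project to $(x,z,w)$ or pass through the convolution $(\Gamma_x)_*\nu_\alpha*\nu_\beta$; it works directly with the full four-dimensional $s$-potential $\varphi_s(\mu_{g_\lambda},\cdot)$ using a \emph{single} one-dimensional probe $g\mapsto g+\lambda p$ with $p(x,y)=y$. After Tonelli, the inner $\lambda$-integral is bounded in terms of the three-dimensional distance $I$, the $w$-displacement $I_g$, and $I_p=\sum_{i\ge0}\beta^i(y_{-i-1}-\tilde y_{-i-1})$; the key lemma is the two-sided bound $|I_p|\asymp|y-\tilde y|^\rho$, which is exactly the transversality you would need for your $\Psi_i$ with $\phi_i(x,y)=y$. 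The payoff of staying in four dimensions is that a single energy estimate covers both regimes $\beta\le\tfrac14$ and $\beta>\tfrac14$ at once: the bound has two exponential terms, with exponents $2-2\log2/\log\beta-s$ and $\tfrac{s}{s-1+\rho}(3-2\log2/\log\alpha-\rho-s)$, and whichever branch of $D_L$ is active simply determines which term is the bottleneck. No case split is needed.

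Your projection route, by contrast, forces a genuine split at $\beta=\tfrac14$, and the second case is where the gap lies. Even granting that $(\Gamma_x)_*\nu_\alpha*\nu_\beta$ is absolutely continuous for a.e.\ parameter (already a Bernoulli-convolution-flavored statement at the critical scale, harder than a subcritical energy bound), you then need the conditional dimension of $y$ given $(x,z,w)$ to equal $(2\log2-\log(1/\beta))/\log(1/\alpha)$. This is not ``analogous'': it is a dimension-conservation statement for the fibers of $(x,y,s)\mapsto(x,\Gamma(x,y)+s)$, and your probe perturbs only the $w$-output of $\Gamma$, giving no direct leverage on the $y$-conditional once you have conditioned on $w$. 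The paper sidesteps this entirely by never decomposing the dimension: the four-dimensional potential already targets the full $D_L$, and the single transversality in $\lambda$ (via $|I_p|\gtrsim|y-\tilde y|^\rho$) suffices.
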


Since the problem is symmetric for uni-directional coupling, we get the following
as a direct implication.

\begin{corollary}
	If $g=0$ and $f$ is $C_b^1$, then for 
	\begin{enumerate}
		\item[(i)] $\alpha>\beta$: $D_1=D_L$ for a prevalent set of $f$'s,
		\item[(ii)] $\alpha=\beta$: $D_1=D_L$ for all $f$,
		\item[(iii)] $\alpha<\beta$: $D_1<D_L$ for all $f$.
	\end{enumerate}
\end{corollary}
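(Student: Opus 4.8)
The corollary follows from Theorem~\ref{main_theorem} by the symmetry between the two uni-directionally coupled blocks of \eqref{coupled_skinny_bakers_maps}, and the plan is simply to make this symmetry explicit. Write $F_{\alpha,\beta,f,g}$ for the map defined by \eqref{coupled_skinny_bakers_maps} with contraction rates $\alpha,\beta$ and coupling functions $f,g$, and let $\Phi(x,y,z,w)=(z,w,x,y)$. Then $\Phi$ is a linear, measure-preserving diffeomorphism of the phase space, and the first step is to verify the substitution identity $\Phi\circ F_{\alpha,\beta,f,g}\circ\Phi^{-1}=F_{\beta,\alpha,g,f}$; this is a one-line check, since interchanging the coordinate pairs $(x,y)$ and $(z,w)$ turns the $y$-update $\alpha y+\{0,\,1-\alpha\}+f(z,w)$ into the $w$-update of the new system and the $w$-update $\beta w+\{0,\,1-\beta\}+g(x,y)$ into its $y$-update, with $\alpha,\beta$ and $f,g$ transposed and with the locally constant offsets $1-\alpha$, $1-\beta$ following their respective contraction rates. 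Specializing to $g=0$ gives that $F_{\alpha,\beta,f,0}$ is conjugate via $\Phi$ to $F_{\beta,\alpha,0,f}$, a system of exactly the form covered by Theorem~\ref{main_theorem} with the theorem's $(\alpha,\beta)$ replaced by $(\beta,\alpha)$ and its coupling function $g$ replaced by $f$.

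Next I would record that every quantity entering the statement is a conjugacy invariant under $\Phi$. Being a measure-preserving diffeomorphism, $\Phi$ carries the physical measure of $F_{\alpha,\beta,f,0}$ to that of $F_{\beta,\alpha,0,f}$; it conjugates the derivative cocycles, hence preserves the Lyapunov exponents and therefore $D_L$; and it preserves the information dimension $D_1$ of the physical measure, as $D_1$ is invariant under bi-Lipschitz maps. Finally, under the relabeling the theorem's coupling function $g$ \emph{is} the corollary's $f$, so ``a prevalent set of $g$'s'' translates verbatim into ``a prevalent set of $f$'s''.

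It then remains to read off the three cases by applying Theorem~\ref{main_theorem} to $F_{\beta,\alpha,0,f}$: the hypothesis $\alpha>\beta$ means the theorem's parameters satisfy ``$\alpha<\beta$'', i.e.\ case~(iii), giving $D_1=D_L$ for a prevalent set of $f$'s; $\alpha=\beta$ is case~(ii), giving $D_1=D_L$ for all $f$; and $\alpha<\beta$ means the theorem's ``$\alpha>\beta$'', i.e.\ case~(i), giving $D_1<D_L$ for all $f$. This is precisely the corollary. No genuine obstacle arises here: the only point requiring care is that passing through $\Phi$ transposes the inequality between $\alpha$ and $\beta$, so that case~(i) of the corollary corresponds to case~(iii) of the theorem and vice versa; all of the substantive work is contained in Theorem~\ref{main_theorem} itself.
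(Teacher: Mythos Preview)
Your proof is correct and takes the same approach as the paper, which simply notes that ``the problem is symmetric for uni-directional coupling'' and states the corollary as a direct implication of Theorem~\ref{main_theorem}. You have merely made the symmetry explicit via the coordinate swap $\Phi$ and checked that the relevant quantities ($D_1$, $D_L$, physical measure, prevalence) are preserved under this conjugacy, which is exactly the content of the paper's one-line justification.
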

Notice that for each pair $(\alpha,\beta)$, at least one type of uni-directional
coupling can change the attractor enough to make $D_1=D_L$.
As a result, we conjecture that in the case of bi-directional coupling,
$D_1=D_L$ for prevalent $f$ and $g$.

\begin{figure}[H]
	\includegraphics[width=1.0\textwidth]{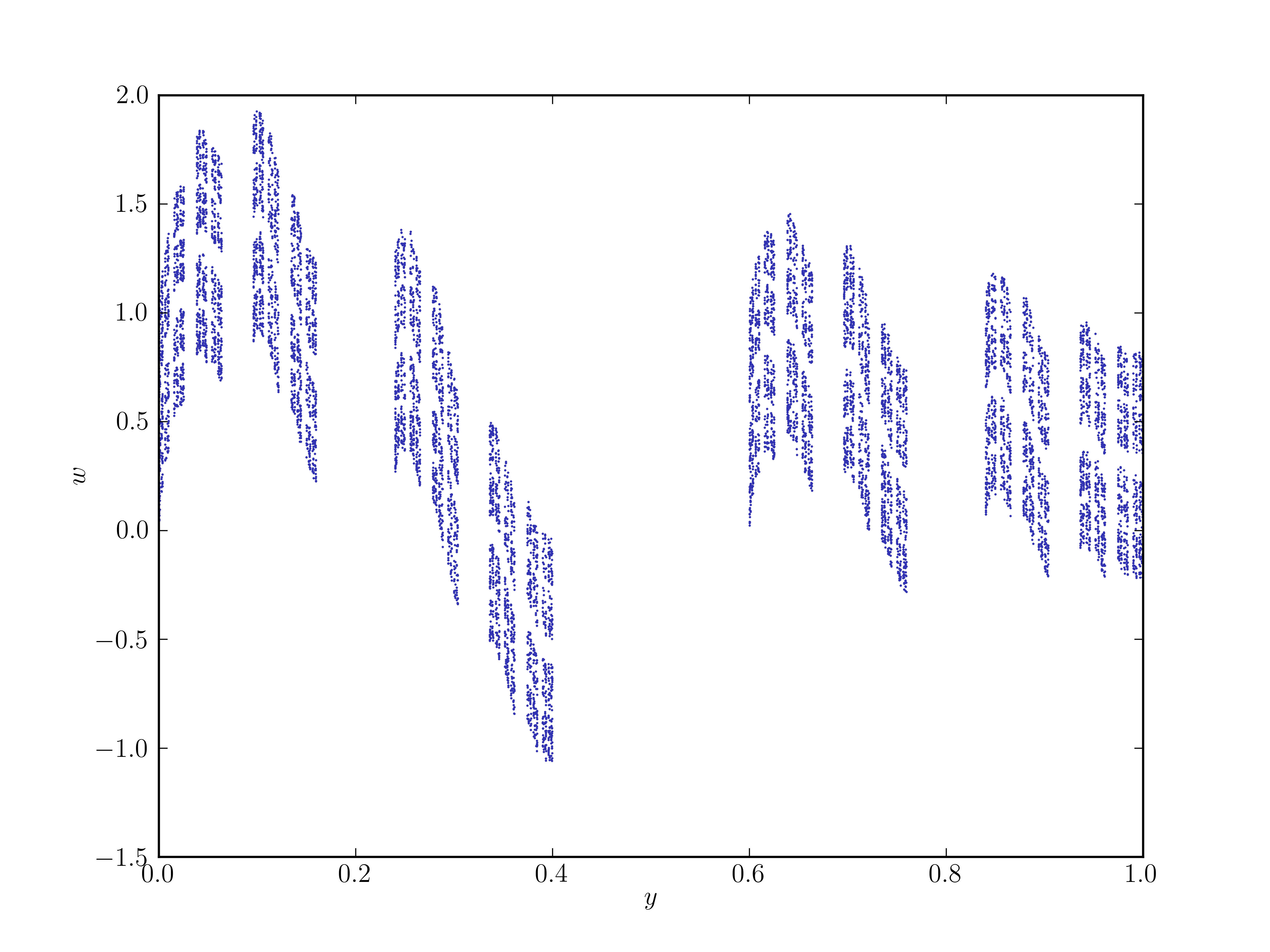}
	\caption{Plot of the intersection of the 4-dimensional attractor of the 
		uni-directionally coupled system for $\alpha=0.4$, $\beta=0.43$ and
		$g(x,y)=\cos\big(\frac{\pi}{2}x\big)\sin\big(\frac{3\pi}{2}y\big)$
		with the $w$-$y$ plane for fixed $x$ and $z$.
		Different values of $x$ and $z$ give similar cross-sections.
		Topologically, the attractor is the product of a Cantor dust-like set,
		as shown in the figure above, and the unit square.}
	\label{sample_attractor}
\end{figure}

With Figure \ref{sample_attractor} the reader can get an impression of what the
attractor of the uni-directional coupled system can look like.
The main task will be to determine the dimension of the physical measure that is 
supported on this attractor.

The outline of this article is as follows.
In the next section, we recall the main notions that are necessary to state the 
properties of the coupled system.
We also restate the Kaplan-Yorke conjecture more precisely, adjusted to our setting.
In the third section, we will prove all the assertions (in particular, Corollary
\ref{corollary_D_L} and Theorem \ref{prevelant_result}) that are needed in order
verify Theorem \ref{main_theorem}.
We conclude the article with a discussion of our results, their implications for
more general systems, and related work on the dimensionality of filtered chaotic
signals.

\bigskip

\noindent{\bf Acknowledgments.} 
The majority of this work was carried out while the first author was undertaking
a Fulbright scholarship at the University of Maryland, College Park.
Also, the first author was partially funded by an Emmy-Noether-grant of the
German Research Council (DFG-grant JA 1721/2-1).
The authors are grateful to Daniel Karrasch and Markus Waurick for many helpful
and enlightening discussions.
This work is related to the activities of the Scientific Network ``Skew product
dynamics and multifractal analysis'' (DFG grant OE 538/3-1).

\section{Preliminaries}

In the following, we will denote by $\mathcal B(M)$ the Borel $\sigma$-algebra 
of a subset $M\subseteq\set R^d$ and with $\lambda^d$ the Lebesgue measure on
$\set R^d$.
Further, $\mathring M$ will denote the interior of $M$ and $\overline M$ the
closure of $M$.

\begin{definition}
	Let $F:M\subseteq\set R^d\to M$ be a map with $M$ locally compact and 
	assume there exist finitely many pairwise disjoint connected subsets 
	$U_i\in\balg M$ such that 
	\[
		M=\bigcup\limits_i U_i
	\]
	and	the map $\left.F\right|_{U_i}$ is continuous for each $U_i$ (with respect
	to the relative topology) plus $\left.F\right|_{\mathring{U_i}}$ is $C^1$.
	Furthermore, we assume that
	\[
		\max_i\Abs{\left. (F\right|_{\mathring{U_i}})'}_\infty<\infty
	\]
	and	$\lambda^d(M_0)=0$ with
	\[
		M_0:=\bigcup\limits_{n\in\set N_0}
			F^{-n}\left(M\backslash\bigcup\limits_i\mathring{U_i}\right).
	\]
	Then, we call $F$ a \emph{piecewise $C^1$ dynamical system}.
\end{definition}

Note that a subset $M\subseteq\set R^d$ is locally compact if and only if there
exist an open subset $A\subseteq\set R^d$ and a closed subset $B\subseteq\set R^d$
such that $M=A\cap B$ \cite[Theorem 6.5]{Dugundji1966}.
This means in particular $M\in\mathcal B(\set R^d)$ and therefore 
$M_0\in\mathcal B(\set R^d)$.
We require $\lambda^d(M_0)=0$ to avoid that orbits could be mapped into an open
subset where the derivative is not defined.
Such an open subset could be considered as a hole and therefore could cause
positive escape rates which in turn would involve a different definition of the
Lyapunov dimension.

\begin{definition}
	Let $F:M\subseteq\set R^d\to M$ be a piecewise $C^1$ dynamical system and
	let $\mu$ be an $F$-invariant Borel probability measure on $M$.
	We call $\mu$ a \emph{physical measure} if there exists a set $V\subseteq M$
	of positive Lebesgue measure such that for every bounded continuous function 
	$\varphi:M\to\set R$,
	\begin{align}
		\label{property_physical_measure}
		\lim\limits_{N\to\infty}\frac{1}{N}\sum\limits_{n=0}^{N-1}\varphi(F^n(x))
		=\int\varphi d\mu,
	\end{align}
	for every $x\in V$.
\end{definition}

Now, we will define the Lyapunov dimension following \cite{AlexanderYorke1984}.

\begin{definition}
	Let $F:M\subseteq\set R^d\to M$ be a piecewise $C^1$ dynamical system.
	Assume that $F$ has an ergodic invariant measure $\mu$ with
	$\mu(M_0)=0$. 
	Let 
	\[
		\chi_1(\mu)\geq\chi_2(\mu)\geq\dots\geq\chi_d(\mu)
	\]
	be the Lyapunov exponents and set
	\[	
		j:=\max\{i:\chi_1(\mu)+\dots+\chi_i(\mu)\geq 0\},
	\]
	(respectively, $0$ if $\chi_1(\mu)<0$).
	We define the \emph{Lyapunov} or \emph{Kaplan-Yorke dimension} as
	\begin{align*}
		D_L(\mu):=
		\begin{cases}
			0&\textnormal{ if }j=0\\
			j+\frac{\chi_1(\mu)+\dots+\chi_j(\mu)}{\abs{\chi_{j+1}(\mu)}}
				&\textnormal{ if }1\leq j<d\\
			d&\textnormal{ if }j=d
		\end{cases}.
	\end{align*}			
\end{definition}

Note that by the required assumptions in the last definition the existence of the
Lyapunov exponents is ensured by Oseledets theorem.
The Kaplan-Yorke conjecture is referring to this situation, see
\cite{FarmerOttYorke1983} or \cite{FredericksonKaplanYorkeYorke1983}.
In these references, one can also find a heuristic derivation and interpretation
of the Lyapunov dimension.
For more information on the Lyapunov exponents, see e.g.\ \cite{BarreiraPesin2002}.

Before we can state the Kaplan-Yorke conjecture for our setting, we will define
all the other dimensions that we need in this paper, following mainly 
\cite{HuntKaloshin1997}.
From now on, let $\mu$ be a Borel probability measure supported on a bounded
subset $A\subset\set R^d$ and let $B(x,\varepsilon)\subset\set R^d$ be an open 
ball around $x\in\set R^d$ with radius $\varepsilon>0$.

\begin{definition}
	The \emph{lower} and \emph{upper box-counting dimension} of $A$ are defined as
	\begin{align*}
		\underline D_B(A):=\liminf\limits_{\varepsilon\to 0}
			\frac{\log N(A,\varepsilon)}{-\log\varepsilon},\\
		\overline D_B(A):=\limsup\limits_{\varepsilon\to 0}
			\frac{\log N(A,\varepsilon)}{-\log\varepsilon},
	\end{align*}
	where $N(A,\varepsilon)$ is the smallest number of balls of radius
	$\varepsilon$ that can cover $A$.
	If $\underline D_B(A)=\overline D_B(A)$, then their common value $D_B(A)$ is
	called the \emph{box-counting dimension} of $A$.
\end{definition}

\begin{definition}
	For each point $x$, we define the \emph{lower} and
	\emph{upper pointwise dimension} of $\mu$ at $x$ to be
	\begin{align*}
		\underline d(\mu,x):=\liminf\limits_{\varepsilon\to 0}
			\frac{\log\mu(B(x,\varepsilon))}{\log\varepsilon},\\
		\overline d(\mu,x):=\limsup\limits_{\varepsilon\to 0}
			\frac{\log\mu(B(x,\varepsilon))}{\log\varepsilon}.
	\end{align*}
	If $\underline d(\mu,x)=\overline d(\mu,x)$, then their common value 
	$d(\mu,x)$ is called the \emph{pointwise dimension} of $\mu$ at $x$.
	We say that the measure $\mu$ is \emph{exact dimensional} if the pointwise
	dimension exists and is constant almost everywhere, i.e.\
	\[
		\underline d(\mu,x)=\overline d(\mu,x)=:d(\mu),
	\]
	for $\mu$-almost every $x\in A$.
\end{definition}

\begin{proposition}[{\cite[Proposition 10.3]{Falconer1997},
	\cite[Proposition 3.8]{Falconer2003}}]
	\label{proposition_relation_pointwise_boxcounting_dimension}
	For $\mu$-almost every $x\in A$ we have
	\[
		\overline d(\mu,x)\leq\overline D_B(A).
	\]
\end{proposition}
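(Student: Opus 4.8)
The plan is to reduce the statement to countably many assertions at fixed exponents and then combine a covering estimate with Borel--Cantelli. Fix any $s>\overline D_B(A)$; it suffices to prove that $\overline d(\mu,x)\le s$ for $\mu$-almost every $x$, since intersecting the resulting full-measure sets over a sequence $s_m\downarrow\overline D_B(A)$ then yields $\overline d(\mu,x)\le\overline D_B(A)$ for $\mu$-almost every $x$. Choose an auxiliary exponent $t$ with $\overline D_B(A)<t<s$. By the definition of $\overline D_B(A)$ as a limit superior there is $\varepsilon_0>0$ with $N(A,\varepsilon)\le\varepsilon^{-t}$ for all $0<\varepsilon\le\varepsilon_0$.

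The heart of the argument is a scale-by-scale mass estimate. For $0<\varepsilon\le\varepsilon_0$, fix a cover of $A$ by balls $B(y_1,\varepsilon),\dots,B(y_N,\varepsilon)$ with $N=N(A,\varepsilon)\le\varepsilon^{-t}$. Call $B(y_i,\varepsilon)$ \emph{light} if $\mu(B(y_i,\varepsilon))<\varepsilon^{s}$ and \emph{heavy} otherwise, and set $E_\varepsilon:=A\setminus\bigcup_{i\text{ heavy}}B(y_i,\varepsilon)$. Since the whole family covers $A$, the set $E_\varepsilon$ is contained in the union of the light balls, hence
\[
\mu(E_\varepsilon)\ \le\ N\varepsilon^{s}\ \le\ \varepsilon^{\,s-t}.
\]
Moreover, if $x\notin E_\varepsilon$ then $x$ lies in some heavy ball $B(y_i,\varepsilon)\subseteq B(x,2\varepsilon)$, so $\mu(B(x,2\varepsilon))\ge\varepsilon^{s}$.

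I would then run this along the dyadic scales $\varepsilon_k:=2^{-k}$, for which $2\varepsilon_{k+1}=\varepsilon_k$. As $t<s$, the series $\sum_k\mu(E_{\varepsilon_k})\le\sum_k 2^{-k(s-t)}$ converges, so Borel--Cantelli shows that the set of $x$ lying outside $E_{\varepsilon_k}$ for all $k\ge K(x)$ has full $\mu$-measure. For such an $x$ and all large $j$ we get $\mu(B(x,\varepsilon_j))=\mu(B(x,2\varepsilon_{j+1}))\ge\varepsilon_{j+1}^{s}=2^{-s}\varepsilon_j^{s}$, and interpolating over $\varepsilon\in[\varepsilon_{j+1},\varepsilon_j]$ by monotonicity of $\mu(B(x,\cdot))$ gives $\mu(B(x,\varepsilon))\ge 2^{-2s}\varepsilon^{s}$ for all small enough $\varepsilon$. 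Dividing the inequality $\log\mu(B(x,\varepsilon))\ge -2s\log 2+s\log\varepsilon$ by the negative quantity $\log\varepsilon$ and letting $\varepsilon\to0$ gives $\overline d(\mu,x)\le s$, which is what we needed.

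The argument is essentially routine; the points that require care are (a) choosing $t$ strictly between $\overline D_B(A)$ and $s$, which is exactly what makes the Borel--Cantelli series summable; (b) the factor-$2$ loss when passing from a covering ball containing $x$ to a ball centered at $x$, together with the interpolation between the dyadic scales $\varepsilon_k$ that it forces, since the covering estimate controls only those scales; and (c) keeping the inequalities correctly reversed when dividing through by $\log\varepsilon<0$. I do not expect a deeper obstacle; the measurability of $\{x:\overline d(\mu,x)\le\overline D_B(A)\}$ is standard and could if necessary be sidestepped using outer measure.
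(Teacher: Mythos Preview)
Your argument is correct. The paper does not give its own proof of this proposition; it is stated with a citation to Falconer's texts and used as a black box. Your covering/Borel--Cantelli argument is essentially the standard proof found in those references, so there is nothing to compare.
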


\begin{definition}
	The \emph{lower} and \emph{upper information dimension} of $\mu$ are defined
	as
	\begin{align*}
		\underline D_1(\mu):=\liminf\limits_{\varepsilon\to 0}
			\frac{\int\log\mu(B(x,\varepsilon))d\mu(x)}{\log\varepsilon},\\
		\overline D_1(\mu):=\limsup\limits_{\varepsilon\to 0}
			\frac{\int\log\mu(B(x,\varepsilon))d\mu(x)}{\log\varepsilon}.
	\end{align*}
	If $\underline D_1(\mu)=\overline D_1(\mu)$, then their common value 
	$D_1(\mu)$ is called the \emph{information dimension} of $\mu$.
\end{definition}

Commonly, a grid based definition of the information dimension is used but for
our setting the integral and grid based definition coincide, see
\cite[Theorem 2.1]{BarbarouxGerminetTcheremchantsev2001}.

\begin{theorem}[{\cite[Theorem 2.2]{Cutler1991},\cite[Theorem 1]{MyjakRudnicki2007}}]
	\label{theorem_relation_pointwise_information_dimension}
	We have
	\[
		\int\underline d(\mu,x)d\mu(x)\leq\underline D_1(\mu)
		\leq\overline D_1(\mu)\leq\int\overline d(\mu,x)d\mu(x).
	\]
\end{theorem}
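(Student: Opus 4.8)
The plan is as follows. The middle inequality $\underline D_1(\mu)\le\overline D_1(\mu)$ is immediate from $\liminf\le\limsup$, so the content lies in the two outer inequalities, and I would prove both by reducing them to (forward and reverse) Fatou lemmas. To this end I would rewrite everything in terms of the nonnegative functions
\[
	F_\varepsilon(x):=\frac{\log\mu(B(x,\varepsilon))}{\log\varepsilon}\ge 0\qquad(0<\varepsilon<1),
\]
for which $\underline d(\mu,x)=\liminf_{\varepsilon\to 0}F_\varepsilon(x)$, $\overline d(\mu,x)=\limsup_{\varepsilon\to 0}F_\varepsilon(x)$, $\underline D_1(\mu)=\liminf_{\varepsilon\to 0}\int F_\varepsilon\,d\mu$ and $\overline D_1(\mu)=\limsup_{\varepsilon\to 0}\int F_\varepsilon\,d\mu$. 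The left inequality is then a continuous-parameter instance of Fatou's lemma: choosing a sequence $\varepsilon_n\downarrow 0$ with $\int F_{\varepsilon_n}\,d\mu\to\underline D_1(\mu)$, Fatou gives $\int\liminf_n F_{\varepsilon_n}\,d\mu\le\underline D_1(\mu)$, and since $\liminf_{\varepsilon\to 0}F_\varepsilon(x)\le\liminf_n F_{\varepsilon_n}(x)$ pointwise, integrating yields $\int\underline d(\mu,x)\,d\mu(x)\le\underline D_1(\mu)$. Only $F_\varepsilon\ge 0$ is needed here.

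For the right inequality I will need the \emph{reverse} Fatou lemma, hence a $\mu$-integrable function dominating all $F_\varepsilon$ with $\varepsilon$ small; producing this majorant is the crux of the argument. The idea is a covering/Borel--Cantelli estimate exploiting that $A$ is bounded, say $A\subseteq B(0,R)$. For each $k\in\mathbb N$ cover $A$ by at most $N_k\le C_d R^d 2^{kd}$ balls of radius $2^{-k-1}$. If $x\in A$ satisfies $\mu(B(x,2^{-k}))<2^{-\lambda k}$, then $x$ lies in one such ball $B_j$, and since $B_j\subseteq B(x,2^{-k})$ one gets $\mu(B_j)<2^{-\lambda k}$; summing over the covering balls that meet $\{x\in A:\mu(B(x,2^{-k}))<2^{-\lambda k}\}$ yields
\[
	\mu\bigl(\{x\in A:\mu(B(x,2^{-k}))<2^{-\lambda k}\}\bigr)\le C_d R^d\,2^{-k(\lambda-d)}.
\]
Writing $G_k(x):=\frac{-\log\mu(B(x,2^{-k}))}{k\log 2}$, this reads $\mu(\{G_k>\lambda\})\le\min\bigl(1,\,C_d R^d 2^{-k(\lambda-d)}\bigr)$. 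Fixing $k_0\ge 1$ and $S:=\sup_{k\ge k_0}G_k$, the tail bound $\mu(\{S>\lambda\})\le\sum_{k\ge k_0}\min(1,C_d R^d 2^{-k(\lambda-d)})$ decays exponentially in $\lambda$, so $\int S\,d\mu=\int_0^\infty\mu(\{S>\lambda\})\,d\lambda<\infty$. Finally, for $\varepsilon\in[2^{-k-1},2^{-k})$ with $k\ge k_0$, monotonicity of $\varepsilon\mapsto\mu(B(x,\varepsilon))$ gives $F_\varepsilon(x)\le\tfrac{k+1}{k}G_{k+1}(x)\le 2S(x)$, so $H:=2S$ is a $\mu$-integrable majorant of $F_\varepsilon$ for all $0<\varepsilon<2^{-k_0}$.

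With $H$ in hand, the right inequality follows exactly as the left one: take $\varepsilon_n\downarrow 0$ with $\int F_{\varepsilon_n}\,d\mu\to\overline D_1(\mu)$ and $\varepsilon_n<2^{-k_0}$, and apply Fatou to the nonnegative functions $H-F_{\varepsilon_n}$ to obtain $\overline D_1(\mu)\le\int\limsup_n F_{\varepsilon_n}\,d\mu\le\int\overline d(\mu,x)\,d\mu(x)$. I expect the main obstacle to be the construction of the integrable majorant $H$ — this is where boundedness of $A$ (equivalently, finiteness of the ambient dimension) genuinely enters, and some care is needed with the covering constants — whereas the remainder is routine manipulation of $\liminf$/$\limsup$ together with the monotonicity of $\varepsilon\mapsto\mu(B(x,\varepsilon))$.
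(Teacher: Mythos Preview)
The paper does not supply its own proof of this statement: it is quoted as a known result with references to Cutler and to Myjak--Rudnicki, and is used later only as a black box. There is therefore no in-paper argument to compare against.

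Your proposal is correct and follows the standard route for this result. The left inequality via Fatou on the nonnegative functions $F_\varepsilon$ is straightforward. For the right inequality, your construction of the integrable majorant is sound: the inclusion $B_j\subseteq B(x,2^{-k})$ whenever $x\in B_j$ and $B_j$ has radius $2^{-k-1}$ gives the tail bound $\mu(\{G_k>\lambda\})\le C_d R^d\,2^{-k(\lambda-d)}$, and summing over $k\ge k_0$ indeed yields a tail for $S=\sup_{k\ge k_0}G_k$ that is bounded by $1$ for $\lambda\le d$ and by a geometric series for $\lambda>d$, so $S\in L^1(\mu)$. The passage from $F_\varepsilon$ to $G_{k+1}$ via monotonicity of $\varepsilon\mapsto\mu(B(x,\varepsilon))$ is also correct. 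Two minor points worth making explicit in a final write-up: (i) $x\mapsto\mu(B(x,\varepsilon))$ is lower semicontinuous, hence all the $F_\varepsilon$, $G_k$ and $S$ are Borel measurable; and (ii) the boundedness hypothesis on the support is exactly what produces the finite covering number $N_k\le C_d R^d 2^{kd}$, without which the majorant argument fails --- this is the one place where the ambient dimension $d$ enters (and incidentally recovers the standard fact that $\overline d(\mu,x)\le d$ $\mu$-a.e.).
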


That means, provided the pointwise dimension exists almost everywhere,
\[
	D_1(\mu)=\int d(\mu,x)d\mu(x),
\]
i.e.\ we can interpret the information dimension of the measure $\mu$ as 
the averaged pointwise dimension of $\mu$.
In particular, if $\mu$ is exact dimensional, then $D_1(\mu)=d(\mu)$.
Note that in this case also several other dimensions of the measure coincide
\cite{Young1982}.

Now, we can state the Kaplan-Yorke conjecture for our setting.

\begin{conjecture}
	Given a locally compact subset $M\subseteq\set R^d$. 
	For ``typical'' piecewise $C^1$ dynamical systems $F:M\to M$ with an ergodic
	invariant physical measure $\mu$, we have that
	\[
		D_1(\mu)=D_L(\mu).
	\]
\end{conjecture}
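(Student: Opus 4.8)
The final statement is a conjecture rather than a theorem, and in the generality stated---all ``typical'' piecewise $C^1$ systems on a locally compact $M\subseteq\set R^d$---it is open. The plan below is therefore the strategy one would pursue to verify it within a prescribed class of systems (as we do here for \eqref{coupled_skinny_bakers_maps}), together with an indication of where a fully general argument fails. The first reduction is to replace $D_1(\mu)$ by the pointwise dimension: by Theorem \ref{theorem_relation_pointwise_information_dimension}, once one knows that $\mu$ is exact dimensional with common value $d(\mu)$, the information dimension equals $d(\mu)$, so the target becomes $d(\mu)=D_L(\mu)$.

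Thus the first substantive step is to establish exact dimensionality of the physical measure. The route I would take is the dimension theory of hyperbolic measures: using the Oseledets splitting guaranteed by the hypotheses, decompose a local ball $B(x,\varepsilon)$ into its intersections with the stable and unstable foliations and analyze the conditional measures of $\mu$ on the leaves. For $C^{1+}$ systems this is the content of the Barreira--Pesin--Schmeling theorem (exact dimensionality of hyperbolic measures) together with the Ledrappier--Young description of the stable and unstable pointwise dimensions; one then obtains $d(\mu)=d^s(\mu)+d^u(\mu)$, where the summands are the $\mu$-a.e.\ constant dimensions of the conditional measures. The piecewise-$C^1$ setting requires checking that the discontinuity set $M_0$, which is both $\lambda^d$-null and (by the physical-measure hypothesis) $\mu$-null, does not interfere with the absolute-continuity and Lipschitz-holonomy arguments.

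The second step is to identify the two conditional dimensions with the Kaplan--Yorke data. Because $\mu$ is physical, one expects it to be an SRB measure, i.e.\ to have conditional measures along unstable manifolds that are absolutely continuous; this forces the unstable directions to be ``filled'', giving $d^u(\mu)=\#\{i:\chi_i(\mu)>0\}$ together with the Pesin entropy formula $h_\mu=\sum_{\chi_i>0}\chi_i(\mu)$. Feeding this entropy into the Ledrappier--Young formula on the stable side, and using that the stable exponents are ordered, should yield that the measure fills the leading stable directions completely and occupies the critical direction $j+1$ with the exact fraction $(\chi_1(\mu)+\dots+\chi_j(\mu))/\abs{\chi_{j+1}(\mu)}$. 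This is precisely $D_L(\mu)$, so the equality reduces to showing that the transverse dimensions produced by Ledrappier--Young attain this extremal, ``no-deficiency'' pattern.

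The main obstacle---and the reason the statement is only conjectural---is nonconformality. When the contraction rates along distinct stable directions differ, the conditional measures are genuinely self-affine, and their dimension can drop below the affinity (Kaplan--Yorke) value through alignment of cylinders, arithmetic resonances among the $\chi_i(\mu)$, or overlaps in the induced projections. This is exactly the mechanism that makes $D_1<D_L$ for the uncoupled skinny baker's maps away from $\alpha=\beta$. The role of prevalence is to exclude these degeneracies: one would seek a transversality or exponential-separation condition on the family of systems---in the spirit of the dimension theorems of Falconer and Hochman for self-affine and self-similar measures---under which the transverse dimensions reach their maximal admissible values for a prevalent set of parameters. Proving such a transversality statement in the required generality, rather than for a structured family whose fibered geometry can be computed explicitly, is the hard part and is what remains open.
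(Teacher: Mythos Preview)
You are right that the statement is a conjecture, and the paper does not prove it: it is stated precisely as a conjecture, with no accompanying proof. So there is nothing in the paper to compare your attempt against at the level of this general statement. Your recognition of this, and your decision to outline a strategy rather than claim a proof, is the correct response.

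That said, it is worth noting that the strategy you sketch---Barreira--Pesin--Schmeling exact dimensionality, Ledrappier--Young conditional dimensions, then a Falconer/Hochman-type transversality argument to force the stable dimensions to their maximal values---is \emph{not} the route the paper takes when it verifies the conjecture for its specific family \eqref{coupled_skinny_bakers_maps}. There the authors build an explicit conjugacy $h_g$ between the uncoupled and coupled attractors (Proposition \ref{proposition_conjugation}), push forward the product measure, and then attack the lower bound on the pointwise dimension directly via the $s$-potential $\varphi_s(\mu_g,\cdot)$ (Theorem \ref{pointwise_dimension_equal_sup_s_potential}). Prevalence enters not through an abstract transversality condition but through a one-parameter probe $g\mapsto g+\lambda p$ and a Tonelli swap that makes the $\lambda$-integral of the potential tractable (Proposition \ref{main_proposition}). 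Your high-level diagnosis of the obstruction---nonconformality and self-affine degeneracies, witnessed already by the uncoupled system for $\alpha\neq\beta$---matches the paper's, but the concrete machinery the paper deploys is more elementary and hands-on than the Pesin-theory framework you propose.
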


In \cite{FredericksonKaplanYorkeYorke1983} it is conjectured that in this context
$\mu$ is exact dimensional with $d(\mu)=D_L(\mu)$ and therefore $D_1(\mu)=D_L(\mu)$.
Indeed, our main result will be of this type.

As already mentioned in the introduction, the word ``typical'' is not precisely
defined in the conjecture.
Thereby, one problem is that in infinite dimensional vector spaces there is no
natural notion of typical phenomena, in the sense of ``Lebesgue almost everywhere'',
respectively, ``Lebesgue measure zero''.
One way to define it is to use the topological notion based on the
category theorem of Baire.
Prevalence is another concept to provide an analog of what typical could mean
in the context of infinite dimensional vector spaces.
In our case, we are dealing with the space of $C^1$ maps where the map and the
derivative are bounded.
We refer to \cite{OttYorke2005} and \cite{HuntKaloshin2010} for more general
definitions and examples regarding the notion of prevalence.

\begin{definition}
	Let $V$ be a completely metrizable topological vector space.
	A Borel measure $\nu$ is said to be \emph{transverse} to a Borel set
	$E'\subset V$ if there exists a compact subset $S\subset V$ with
	$0<\nu(S)<\infty$ and $\nu(E'+v)=0$ for all $v\in V$.
	A subset $E\subset V$ will be called \emph{shy} if there exist a Borel set
	$E'\subset V$ with $E\subseteq E'$ and a measure $\nu$ that is transverse to
	$E'\subset V$.
	The complement of a shy set is called a \emph{prevalent} set.
\end{definition}

Note that if this concept is applied to $V=\set R^d$ the only transverse 
measure	is Lebesgue measure.
This motivates the following definition.

\begin{definition}
	A finite dimensional subspace $P\subset V$ will be called a probe for a set
	$T\subset V$ if Lebesgue measure supported on $P$ is transverse to a Borel set
	containing the complement of $T$.
\end{definition}

One of our main tools will be the potential theoretic method for the pointwise
dimension.

\begin{definition}
	For $s\geq 0$ define the \emph{$s$-potential} of $\mu$ at a point 
	$x\in\set R^d$ as
	\[
		\varphi_s(\mu,x):=\int\frac{1}{\abs{x-y}^s}d\mu(y).
	\]
\end{definition}

\begin{theorem}[\cite{SauerYorke1997}]
	\label{pointwise_dimension_equal_sup_s_potential}
	For $x\in A$ we have
	\[
		\underline d(\mu,x)=\sup\{s:\varphi_s(\mu,x)<\infty\}
		=\inf\{s:\varphi_s(\mu,x)=\infty\}.
	\]
\end{theorem}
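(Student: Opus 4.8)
The plan is to relate the measure of small balls $B(x,\varepsilon)$ to the $s$-potential through two one-sided estimates and then read off the claimed equalities. Write $D:=\underline d(\mu,x)$. A preliminary reduction: if $x\notin\mathrm{supp}\,\mu$ then $\mu(B(x,\varepsilon))=0$ for all small $\varepsilon$, so $D=\infty$, while $\varphi_s(\mu,x)\le\mathrm{dist}(x,\mathrm{supp}\,\mu)^{-s}<\infty$ for every $s$; hence all three quantities equal $+\infty$ (with the convention $\inf\emptyset=+\infty$) and there is nothing to prove. So I assume $x\in\mathrm{supp}\,\mu$.

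First I would show that $\varphi_s(\mu,x)<\infty$ forces $D\ge s$: restricting the potential integral to $B(x,\varepsilon)$ and using $|x-y|^{-s}\ge\varepsilon^{-s}$ there gives $\mu(B(x,\varepsilon))\le\varepsilon^{s}\,\varphi_s(\mu,x)$ for all $\varepsilon>0$; taking logarithms, dividing by $\log\varepsilon<0$, and letting $\varepsilon\to0$ (so that $(\log\varphi_s(\mu,x))/\log\varepsilon\to0$) yields $\liminf_{\varepsilon\to0}\frac{\log\mu(B(x,\varepsilon))}{\log\varepsilon}\ge s$. Taking the supremum over admissible $s$ gives $D\ge\sup\{s:\varphi_s(\mu,x)<\infty\}$. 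Conversely, I would show that $s<D$ forces $\varphi_s(\mu,x)<\infty$: fix $s'\in(s,D)$, so that $\mu(B(x,\varepsilon))\le\varepsilon^{s'}$ for all $0<\varepsilon<\varepsilon_0$ for some $\varepsilon_0\in(0,1]$, and pick $K$ with $2^{-K}<\varepsilon_0$. Splitting $\set R^d\setminus\{x\}$ into the set $\{|x-y|\ge2^{-K}\}$ and the dyadic annuli $\{2^{-(k+1)}\le|x-y|<2^{-k}\}$ for $k\ge K$, and interchanging sum and integral by Tonelli, one gets
\[
\varphi_s(\mu,x)\le 2^{Ks}+\sum_{k\ge K}2^{(k+1)s}\,\mu\big(B(x,2^{-k})\big)\le 2^{Ks}+2^{s}\sum_{k\ge K}2^{-k(s'-s)}<\infty
\]
because $s'-s>0$ and $\mu$ is a probability measure. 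Hence $\sup\{s:\varphi_s(\mu,x)<\infty\}\ge D$, so equality holds.

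Finally, the two implications together show that $\{s\ge0:\varphi_s(\mu,x)<\infty\}$ contains $[0,D)$ and is contained in $[0,D]$; thus it is an interval with supremum $D$, and its complement has infimum $D$, giving $\underline d(\mu,x)=\sup\{s:\varphi_s(\mu,x)<\infty\}=\inf\{s:\varphi_s(\mu,x)=\infty\}$. The argument is essentially a Markov inequality in one direction and a dyadic layer-cake estimate in the other, and I do not anticipate a real obstacle beyond bookkeeping: the only points needing a word of care are the degenerate cases $D=0$ (e.g.\ $x$ an atom of $\mu$, where the first implication is vacuous and no $s$ is admissible in the second) and $D=\infty$ (where the second implication gives $\varphi_s(\mu,x)<\infty$ for every $s$), both of which are handled automatically by the reduction and the estimates above.
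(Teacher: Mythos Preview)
The paper does not give its own proof of this theorem; it is stated with a citation to \cite{SauerYorke1997} and used as a tool. Your argument is the standard one and is correct: the Markov-type inequality $\mu(B(x,\varepsilon))\le\varepsilon^{s}\varphi_s(\mu,x)$ gives $\underline d(\mu,x)\ge s$ whenever the potential is finite, and the dyadic annulus decomposition together with $\mu(B(x,\varepsilon))\le\varepsilon^{s'}$ for $s'<\underline d(\mu,x)$ gives finiteness of $\varphi_s$ for every $s<\underline d(\mu,x)$. Your treatment of the boundary cases ($x\notin\mathrm{supp}\,\mu$, $D=0$, $D=\infty$) is also fine. This is essentially the same argument one finds in Sauer--Yorke and in standard references (e.g.\ Falconer), so there is nothing to contrast.
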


\section{Proofs}

From now on, we assume
\[
	0<\alpha,\beta<\frac{1}{2}
\]
and set 
\[
	M:=[0,1)\times\set R.
\]

First, we define the 2-dimensional and the uncoupled skinny baker's map.

\begin{definition}
	The \emph{(2-dimensional) skinny baker's map} is defined as 
	\begin{align*}
		B_\alpha:M\subset\set R^2\to M:(x,y)\mapsto
		\begin{cases}
			(2x,\alpha y)&\textnormal{ if }0\leq x<\frac{1}{2}\\\\
			(2x-1,\alpha y+1-\alpha)&\textnormal{ if }\frac{1}{2}\leq x<1
		\end{cases}.
	\end{align*}
	The \emph{uncoupled skinny baker's map} is defined as
	\[
		\Buc:M^2\subset\set R^4\to M^2:(x,y,z,w)\mapsto(B_\alpha(x,y),B_\beta(z,w)),
	\]
	where $M^2=M\times M$.
\end{definition}

We will state some basic facts  of the 2-dimensional skinny baker's map, for
more details see \cite{AlexanderYorke1984} and \cite{FarmerOttYorke1983}.
The attractor of $B_{\alpha}$ is just the product of the interval $[0,1)$ 
in the $x$-direction and a Cantor set (determined by the parameter $\alpha$ and
denoted by $A_\alpha$ in the following) in the $y$-direction.
The box-counting dimension of the attractor is $1-\frac{\log 2}{\log\alpha}$
(this also holds for the Hausdorff dimension of the attractor).
The physical measure $\mu_\alpha$ of $B_{\alpha}$ is unique (the basin is
Lebesgue almost every point) and it is the product of the Lebesgue measure in
the $x$-direction and the Cantor measure in the $y$-direction, denoted by
$\nu_\alpha$ in the following.
Furthermore, $\mu_\alpha$ is strong-mixing and is exact dimensional with the 
same value as the box-counting dimension.

From these facts we can deduce some properties of the uncoupled skinny baker's
map.
The product set $\Auc:=[0,1)\times A_\alpha\times[0,1)\times A_\beta$ is
invariant under $\Buc$ and we will see in the first proposition that $\Auc$ is
also the attractor of $\Buc$.
Further, the product measure $\muuc:=\mu_\alpha\times\mu_\beta$ is invariant
under $\Buc$ and is also strong-mixing \cite[Proposition 1.6]{Brown1976}, i.e.\
in particular ergodic.
Later, we will see that it is the unique physical measure of the uncoupled
system, too.
Since the box-counting and Hausdorff dimension of the attractor of $B_\alpha$
coincide and since the pointwise dimension is additive, we have
\[
	D_B(\Auc)=d(\muuc)=2-\frac{\log 2}{\log\alpha}-\frac{\log 2}{\log\beta}.
\]

Next, we will define the coupled skinny baker's maps.
In order to do this, we will need a space of coupling functions.
For that purpose, consider an open subset $U\subseteq\set R^d$ and denote by
$C_b^1(U)$ the space of all $C^1$ maps $g:U\to\set R$ where $g$ and $g'$ are
bounded.
Note that $C_b^1(U)$ equipped with the norm 
$\Abs{g}_{1,\infty}:=\max\{\Abs{g}_\infty,\Abs{g'}_\infty\}$ is a Banach space,
where $\Abs{\cdot}_\infty$ denotes the uniform norm.
Also note that $\mathring M$ is convex and therefore $g\in C_{b}^1(\mathring M)$
is Lipschitz continuous.
Hence, $g$ has a unique continuous extension on $\overline M$ and therefore
$g(0,y)$ with $y\in\set R$ is well-defined (with the convention of using the
same symbol for the extension).
That is why, we will just write $g\in\Cone$ from now on.

\begin{definition}
	For $g\in\Cone$, we define the \emph{coupled skinny baker's map} as
	\[
		\Bc:M^2\subset\set R^4\to M^2:(x,y,z,w)\mapsto
			(B_\alpha(x,y),B_\beta(z,w)+(0,g(x,y))).
	\]
\end{definition}

The uncoupled and coupled skinny baker's map are piecewise $C^1$ dynamical systems,
where the domain $M^2$ has the partition 
\begin{align*}
	U_1&:=\left[0,\frac{1}{2}\right)\times\set R
		\times\left[0,\frac{1}{2}\right)\times\set R,\quad
	U_2:=\left[0,\frac{1}{2}\right)\times\set R
		\times\left[\frac{1}{2},1\right)\times\set R,\\
	U_3&:=\left[\frac{1}{2},1\right)\times\set R
		\times\left[\frac{1}{2},1\right)\times\set R,\quad
	U_4:=\left[\frac{1}{2},1\right)\times\set R
		\times\left[0,\frac{1}{2}\right)\times\set R
\end{align*}
and
\[
	M^2_0=\{(x,y,z,w)\in M^2:x\textnormal{ or }z
		\textnormal{ is a dyadic fraction or }0\},
\]
with $\lambda^4(M^2_0)=0$.
Furthermore, we have for the derivative (piecewise, i.e.\ on each $\mathring U_i$)
\begin{align}
	\label{derivative}
	\Bc'(x,y,z,w)=
	\begin{pmatrix}
		2		&	0		&	0	&	0\\
		0		&	\alpha	&	0	&	0\\
		0		&	0		&	2	&	0\\
		g_x(x,y)&	g_y(x,y)& 	0	&	\beta
	\end{pmatrix}
\end{align}
and the derivative is (piecewise) bounded because $g\in\Cone$ (note that by
$g_x$ we mean the partial derivative of $g$ with respect to the variable $x$).

A direct calculation gives that the Lyapunov exponents of the uncoupled skinny 
baker's map are $\log 2$, $\log 2$, $\log\alpha$ and $\log\beta$ almost
everywhere with respect to the Lebesgue measure and the physical measure.
This implies for $\alpha\leq\beta$
\[
	D_L(\muuc)=
	\begin{cases}
		2-2\frac{\log 2}{\log\beta}&\textnormal{ if }\beta\leq\frac{1}{4}\\
		3-2\frac{\log 2}{\log\alpha}-\frac{\log\beta}{\log\alpha}&
			\textnormal{ if }\beta\geq\frac{1}{4}
	\end{cases}
\]
(for $\alpha\geq\beta$ interchange $\alpha$ with $\beta$).
Observe that $d(\muuc)=D_1(\muuc)<D_L(\muuc)$ for $\alpha\neq\beta$ and
$d(\muuc)=D_1(\muuc)=D_L(\muuc)$ for $\alpha=\beta$, i.e.\ the Kaplan-Yorke
equality fails for Lebesgue almost every
$(\alpha,\beta)\in\big(0,\frac{1}{2}\big)^2$.

In Figure \ref{relation_of_information_dimension_Lyapunov_dimension}, we show
the information dimension and Lyapunov dimension of the uncoupled skinny 
baker's map for fixed $\alpha=0.05$.
Note the relation between the two branches of the Lyapunov dimension
for $\beta\geq 0.05$.

\begin{figure}[H]
	\includegraphics[width=1.0\textwidth]{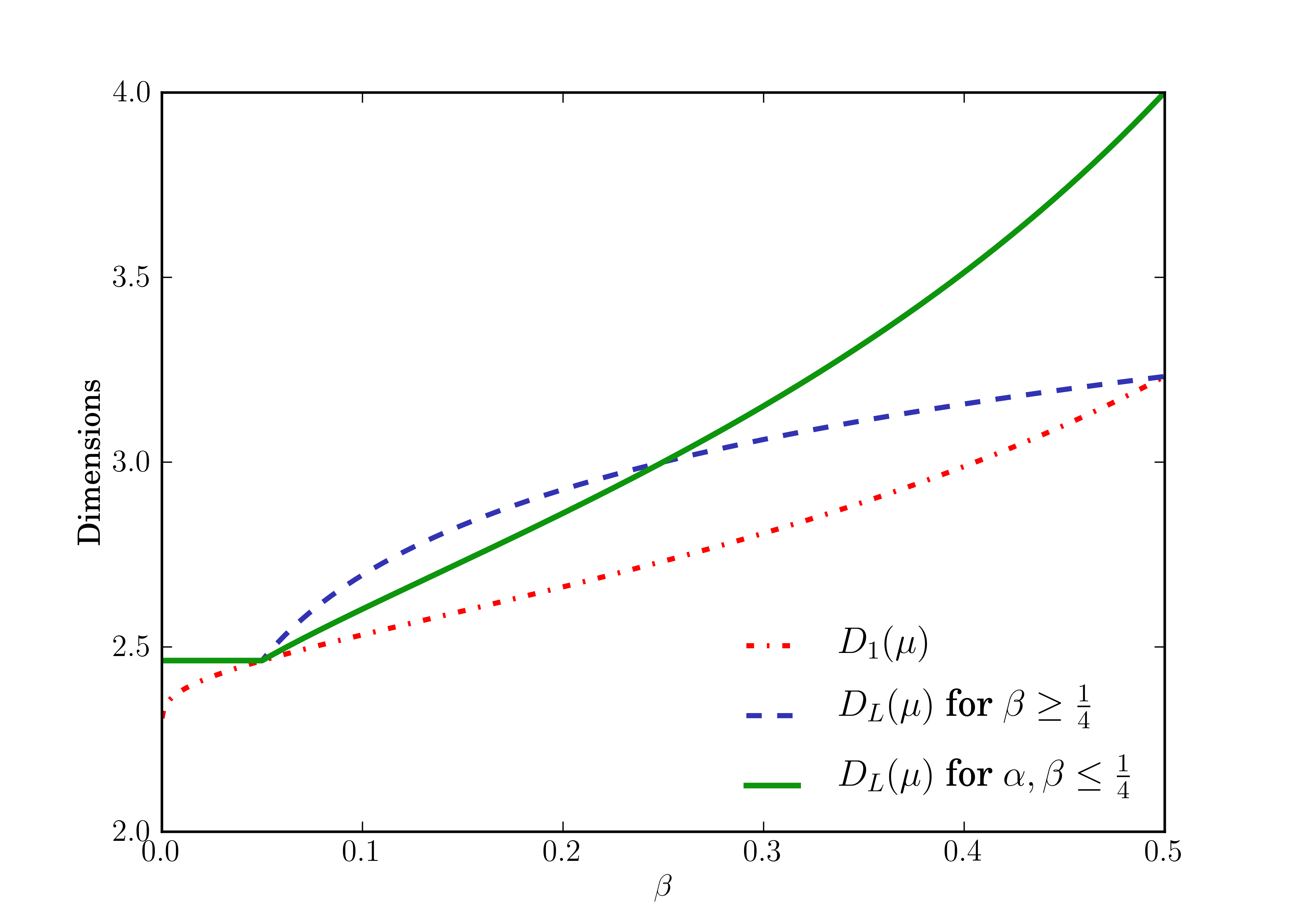}
	\caption{The information dimension $D_1(\muuc)$ and the Lyapunov dimension
		$D_L(\mu)$ for the uncoupled skinny baker's map for fixed $\alpha=0.05$.}
	\label{relation_of_information_dimension_Lyapunov_dimension}
\end{figure}

\begin{proposition}
	\label{proposition_conjugation}
	For $g\in\Cone$, the attractor $\Ac$ of the coupled skinny baker's map is 
	the image of a map $\hc$, where the domain of $\hc$ is the attractor of
	the uncoupled skinny baker's map. 
	The conjugacy $\hc:\Auc\to \Ac$ is defined as
	\begin{align*}
		\hc[\pm 1](x,y,z,w):=\left(x,y,z,w\pm\sum_{i=0}^{\infty}\beta^i 
			g\left(B_\alpha^{-i-1}(x,y)\right)\right).
	\end{align*}
	We have $\Buc(x,y,z,w) = \hc[-1](\Bc(\hc(x,y,z,w)))$ and furthermore, for
	\begin{enumerate}
		\item[(i)] $\alpha>\beta$: $\hc$ is bi-Lipschitz,
		\item[(ii)] $\alpha=\beta$: $\hc[\pm 1]$ is Hölder continuous for
			all Hölder exponents $\rho<1$,
		\item[(iii)] $\alpha<\beta$: $\hc$ is Hölder continuous with 
			Hölder exponent $\rho=\frac{\log\beta}{\log\alpha}$.
	\end{enumerate}
\end{proposition}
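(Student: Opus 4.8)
The plan is to reduce everything to the function $S(x,y):=\sum_{i=0}^{\infty}\beta^{i}g\big(B_\alpha^{-i-1}(x,y)\big)$ on the attractor $[0,1)\times A_\alpha$ of $B_\alpha$. On this set $B_\alpha$ is a bijection: the branch of $B_\alpha^{-1}$ is selected according to whether the $y$-coordinate lies in $\alpha A_\alpha\subseteq[0,\alpha]$ or in $\alpha A_\alpha+(1-\alpha)\subseteq[1-\alpha,1]$, and these are disjoint since $\alpha<1/2$, so $B_\alpha^{-i-1}$ is well-defined. As $g$ is bounded and $0<\beta<1$, the series converges absolutely and uniformly, so $S$ is continuous; consequently $\hc[\pm1]$, which only shears the $w$-coordinate by $\pm S(x,y)$, is well-defined, and $\hc$ and $\hc[-1]$ are continuous and mutually inverse. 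Separating the $i=0$ term and reindexing gives the cohomological identity $S\circ B_\alpha=g+\beta\,S$ on $[0,1)\times A_\alpha$, and I would plug this, together with $B_\beta(z,w+t)=B_\beta(z,w)+(0,\beta t)$, into the definitions of $\Bc$ and $\Buc$ to see that it is precisely what makes $\Buc=\hc[-1]\circ\Bc\circ\hc$ hold: the coupling term $g(x,y)$ added to the $w$-equation of $\Bc$ is exactly $S(B_\alpha(x,y))-\beta S(x,y)$. Since $\hc$ thus conjugates $\Buc$ to $\Bc$ and $\Buc(\Auc)=\Auc$, the bounded set $\hc(\Auc)$ is $\Bc$-invariant; and because the $(x,y)$-dynamics is autonomous and drives every forward orbit into the slice $[0,1)\times A_\alpha\times[0,1)\times\set R$ on which $\hc$ is defined, one checks that $\hc(\Auc)$ attracts a neighborhood and hence equals $\Ac$.

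It then remains to establish the regularity claims, and since $\hc[\pm1]$ is a shear by $\pm S$ it suffices to estimate the modulus of continuity of $S$ on $[0,1)\times A_\alpha$, treating the $x$- and $y$-directions separately. Fixing $y$ and varying $x$: the branch of $B_\alpha^{-1}$ depends only on $y$, so $B_\alpha^{-i-1}(x,y)$ and $B_\alpha^{-i-1}(x',y)$ agree in the $y$-coordinate and are $2^{-i-1}\abs{x-x'}$ apart in the $x$-coordinate; summing $\sum_i\beta^i\Abs{g'}_\infty2^{-i-1}\abs{x-x'}$ shows $S$ is Lipschitz in $x$ for all $\alpha,\beta$. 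Fixing $x$ and varying $y\in A_\alpha$: if $y,y'$ lie in a common level-$\ell$ cylinder of $A_\alpha$ but not in a common level-$(\ell+1)$ one, then (a level-$\ell$ cylinder has diameter $\alpha^{\ell}$ while the gap between its two level-$(\ell+1)$ subcylinders has length $\alpha^{\ell}(1-2\alpha)$) we get $\alpha^{\ell}(1-2\alpha)\leq\abs{y-y'}\leq\alpha^{\ell}$, i.e.\ $\ell=\tfrac{\log\abs{y-y'}}{\log\alpha}+O(1)$; moreover for $i<\ell$ the backward orbits of $(x,y)$ and $(x,y')$ take the same branches, so they retain equal $x$-coordinates and $y$-coordinates $\abs{y-y'}/\alpha^{i+1}$ apart, whereas for $i\geq\ell$ only the crude bound $2\Abs{g}_\infty$ on $\abs{g(B_\alpha^{-i-1}(x,y))-g(B_\alpha^{-i-1}(x,y'))}$ is available. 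Hence
\[
	\abs{S(x,y)-S(x,y')}\ \leq\ \frac{\Abs{g'}_\infty}{\alpha}\,\abs{y-y'}\sum_{i=0}^{\ell-1}(\beta/\alpha)^{i}\ +\ \frac{2\Abs{g}_\infty}{1-\beta}\,\beta^{\ell},
\]
and $\beta^{\ell}\asymp\abs{y-y'}^{\log\beta/\log\alpha}$.

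The three cases follow by inspecting this bound. If $\alpha>\beta$, the geometric sum is bounded and $\log\beta/\log\alpha>1$, so the right-hand side is $O(\abs{y-y'})$; thus $S$ is Lipschitz, whence $\hc$ and $\hc[-1]$ are both Lipschitz and $\hc$ is bi-Lipschitz. If $\alpha=\beta$, the geometric sum is $\ell\asymp\log(1/\abs{y-y'})$, so the bound is $O\big(\abs{y-y'}\log(1/\abs{y-y'})\big)\leq C_\rho\abs{y-y'}^{\rho}$ for every $\rho<1$; together with the Lipschitz bound in $x$ this makes $S$, and therefore $\hc[\pm1]$, Hölder with every exponent $\rho<1$. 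If $\alpha<\beta$, the geometric sum is comparable to $(\beta/\alpha)^{\ell}$, so both terms are $\asymp\beta^{\ell}\asymp\abs{y-y'}^{\log\beta/\log\alpha}$; combining with $\abs{x-x'}\leq\abs{x-x'}^{\rho}$ on $[0,1)$, this makes $S$ — hence $\hc$ — Hölder with exponent $\rho=\log\beta/\log\alpha<1$. The step I expect to be the main obstacle is the $y$-direction analysis: one must pin down the dictionary between the Euclidean distance on $A_\alpha$ and the number of shared cylinder levels (the two-sided gap estimate), verify that the backward iterates of two nearby $y$-values stay in a common cylinder for exactly $\ell$ steps and then separate by a $\Theta(1)$ amount, and check that the tail $\sum_{i\geq\ell}\beta^{i}\asymp\beta^{\ell}$ matches the head — so that case (iii) genuinely produces the exponent $\log\beta/\log\alpha$ and the logarithmic loss in case (ii) is unavoidable. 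The cohomological identity and the identification $\hc(\Auc)=\Ac$, by contrast, are routine.
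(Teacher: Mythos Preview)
Your proposal is correct and follows essentially the same route as the paper: both reduce the regularity of $\hc[\pm1]$ to the modulus of continuity of the shear $S(x,y)=\sum_i\beta^i g(B_\alpha^{-i-1}(x,y))$, split the sum at the index $\ell$ (the paper's $k$) determined by how many backward steps $y$ and $y'$ share the same branch of $B_\alpha^{-1}$, and read off the three cases from the resulting bound $\tfrac{\Abs{g'}_\infty}{\alpha}\abs{y-y'}\sum_{i<\ell}(\beta/\alpha)^i+O(\beta^\ell)$; your cylinder/two-sided-gap language is exactly the paper's inequality $(1-2\alpha)\alpha^k>\abs{y-\tilde y}\geq(1-2\alpha)\alpha^{k+1}$ in different dress, and your cohomological identity $S\circ B_\alpha=g+\beta S$ is the one-line repackaging of the paper's direct computation of the conjugacy. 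The only place where the paper is more explicit than your sketch is the identification $\Ac=\hc(\Auc)$: rather than appealing to attraction of a neighborhood, it takes an arbitrary point of $\Ac$, picks a bounded past history, and lets $n\to\infty$ in the explicit formula for $w_n$ to see that the point lies in $\hc(\Auc)$ --- you may want to spell this out, but it is indeed routine.
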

\begin{proof}
	For $a\in[0,1]$ let $((a)_i)_{i\in\set N_0}$ be the binary representation
	of $a$ with $(a)_i$ zero or one for all $i\in\set N_0$ and
	\[
		a=\sum\limits_{i=0}^{\infty}\frac{(a)_i}{2^{i+1}}.
	\]
	In the case that $a$ is a dyadic fraction, we have to make a choice 
	between the terminating or non-terminating representation to ensure the
	uniqueness of the binary representation.
	For $n\in\set N_0$ and $(x,y,w,z)\in M^2$, define
	\begin{align}\label{definition_xyzw_n}
		(x_n,y_n,z_n,w_n):=\Bc^n(x,y,z,w)
	\end{align}
	and a direct calculation shows
	\begin{align*}
		x_n&=2^n x-\sum\limits_{i=0}^{n-1}2^{n-1-i}(x)_i,\quad
		y_n=\alpha^n y+(1-\alpha)\sum\limits_{i=0}^{n-1}\alpha^{n-1-i}(x)_i,\\
		z_n&=2^n z-\sum\limits_{i=0}^{n-1}2^{n-1-i}(z)_i,\quad
		w_n=\beta^n w+(1-\beta)\sum\limits_{i=0}^{n-1}\beta^{n-1-i}(z)_i
			+\sum\limits_{i=0}^{n-1}\beta^{n-1-i}g(x_i,y_i).
	\end{align*}

	First, we want to explain that the series in the definition of $\hc$ is
	well-defined: for $B_{\alpha}$ restricted to $[0,1)\times[0,1]$ we can
	define an inverse map by
	\begin{align*}
		B_\alpha^{-1}(x,y):=
		\begin{cases}
			\left(\frac{x}{2},\frac{y}{\alpha}\right)
				&\textnormal{ if }y\leq\frac{1}{2}\\\\
			\left(\frac{x+1}{2},\frac{y-(1-\alpha)}{\alpha}\right)
				&\textnormal{ if }y>\frac{1}{2}
		\end{cases},
	\end{align*}
	on $[0,1)\times([0,\alpha]\cup[1-\alpha,1])$.
	Since $[0,1)\times A_{\alpha}$ is the attractor of $B_\alpha$, we have
	\begin{align*}
		(x,y)&\in[0,1)\times A_{\alpha}=B_{\alpha}([0,1)\times A_{\alpha})\\
		&\Longleftrightarrow(x,y)\in [0,1)\times A_{\alpha}, 
			B_{\alpha}([0,1)\times A_{\alpha}), 
			B_{\alpha}^{2}([0,1)\times A_{\alpha}),
			\ldots
	\end{align*}
	and this means
	\[
		\ldots,B_{\alpha}^{-2}(x,y),B_{\alpha}^{-1}(x,y),(x,y)
			\in[0,1)\times A_{\alpha},
	\]
	i.e.\ each $(x,y)\in[0,1)\times A_{\alpha}$ has a unique infinite past 
	history
	\begin{align}\label{definition_xy_n_past_history}
		(x_{-n}, y_{-n}):=B_\alpha^{-n}(x,y),
	\end{align}
	with $n\in\set N$ on the attractor.
	Thus, for $(x,y)\in[0,1)\times A_{\alpha}$ the occurring series in $\hc$ is 
	well-posed and convergent, since $g$ is bounded.
	Furthermore, for each $n\in\set N$
	\[
		y=\alpha^n y_{-n}+(1-\alpha)\sum_{j=0}^{n-1}\alpha^j(x_{-j-1})_0,
	\]
	where $j=n-1-i$ and $(x_{-n})_{n-1-j}=(x_{-j-1})_0$. 
	Therefore, in the limit,
	\[
		y=(1-\alpha)\sum_{i=0}^{\infty}\alpha^i (x_{-i-1})_{0}
	\]
	and we have the analog result for $(z,w)\in[0,1)\times A_{\beta}$ (for 
	the uncoupled system).
	We will need this result and a similar argument to show that $\hc(\Auc)$ is
	the attractor for the coupled skinny baker's map: direct computation gives
	for $(x,y,z,w)\in[0,1)\times A_{\alpha}\times M$ 
	\begin{equation}\label{conjugation}
		 \hc((\Buc(x,y,z,w))=\Bc(\hc(x,y,z,w))
	\end{equation}
	and we will see that only $(x,y,z,w)\in A$ matter.
	For some $\delta>0$, define the set
	\[
		V:=[0,1)\times[-\delta,1+\delta]\times[0,1)\times
			\left[-\delta-\frac{\supnorm g}{1-\beta},
			1+\delta+\frac{\supnorm g}{1-\beta}\right].
	\]
	Note that $\Bc(V)\subset V$, i.e.\ we can define
	\[
		\Ac:=\bigcap\limits_{n\in\mathbb N}\Bc^n(V).
	\]
	Also note that $\hc(\Auc)\subset V$ and due to \eqref{conjugation}, we have 
	$\hc(\Auc)=\Bc^n(\hc(\Auc))\subset \Bc^n(V)$ for each $n\in\set N$, i.e.\
	$\hc(\Auc)\subset\Ac$.
	Now, using a similar argument as above, we will show $\hc(\Auc)=\Ac$. 
	We have that each $(x,y,z,\tilde w)\in \Ac$ has at least one past history, 
	denoted by $(x_{-1},y_{-1},z_{-1},\tilde w_{-1})$ and this point has also at
	least one past history, denoted by $(x_{-2},y_{-2},z_{-2},\tilde w_{-2})$ 
	and so on.
	Every single possible, infinite past history is a subset of $V$, i.e.\ is 
	bounded.
	For each $n\in\set N$ we have
	\[
		\tilde w=\beta^n\tilde w_{-n}
			+(1-\beta)\sum_{j=0}^{n-1}\beta^j(z_{-j-1})_0
			+\sum_{j=0}^{n-1}\beta^j g\left(x_{-j-1},y_{-i-1}\right),
	\]
	again $j=n-1-i$ and $(z_{-n})_{n-1-j}=(z_{-j-1})_0$.
	Thus, in the limit,
	\[
		\tilde w=(1-\beta)\sum_{i=0}^{\infty}\beta^i (z_{-i-1})_{0}
			+\sum_{i=0}^{\infty}\beta^i g\left(B_\alpha^{-i-1}(x,y)\right)
		=w+\sum_{i=0}^{\infty}\beta^i g\left(B_\alpha^{-i-1}(x,y)\right),
	\]
	with $(x,y,z,w)\in A$.
	This shows $\hc(\Auc)=\Ac$. 
	It also follows that $\hc$ is injective, i.e.\ the inverse map 
	$\hc[-1]:\Ac\to\Auc$ exists and is given as stated above.

	Now, we prove the continuity properties of $\hc$.
	Let $(x,y,z,w),(\tilde x,\tilde y,\tilde z,\tilde w)\in\Auc$ and 
	\begin{align*}
		\Abs{\hc(x,y,z,w)-\hc(\tilde x,\tilde y,\tilde z,\tilde w)}
		&=\Abs{(x-\tilde x,y-\tilde y,z-\tilde z,w-\tilde w+\Delta_g)}\\
		&\leq\abs{x-\tilde x}+\abs{y-\tilde y}+\abs{z-\tilde z}
			+\abs{w-\tilde w}+\abs{\Delta_g},
	\end{align*}
	with
	\[
		\Delta_g=\sum_{i=0}^{\infty}\beta^i
			\left[g\left(B_\alpha^{-i-1}(x,y)\right)
			-g\left(B_\alpha^{-i-1}(\tilde x,\tilde y)\right)\right].
	\]
	The estimation of $\Delta_g$ will be the main task.
	First, note that
	\begin{equation}\label{Delta_g_estimation}
		\abs{\Delta_g}\leq\Lip g
			\sum_{i=0}^{\infty}\beta^i\Abs{B_\alpha^{-i-1}(x,y)
			-B_\alpha^{-i-1}(\tilde x,\tilde y)}.
	\end{equation}
	For $y=\tilde y$,
	\[
		\Abs{B_\alpha^{-i-1}(x,y)-B_\alpha^{-i-1}(\tilde x,\tilde y)}
		\leq \abs{x-\tilde x}
	\]
	because
	\[
		B_\alpha^{-i-1}(x,y)=\left(x_{-i-1},y_{-i-1}\right)
		=\left(\frac{x_{-i}+(y_{-i})_0}{2},
			\frac{y_{-i}-(1-\alpha)(y_{-i})_0}{\alpha}\right),
	\]
	and therefore
	\[
		y_{-i-1}-\tilde y_{-i-1}=0
		\quad\textrm{ and }\quad
		x_{-i-1}-\tilde x_{-i-1}=\frac{x-\tilde x}{2^{i+1}}.
	\]
	For $\abs{y-\tilde y}\geq (1-2\alpha)$, we have
	\[
		\Abs{B_\alpha^{-i-1}(x,y)-B_\alpha^{-i-1}(\tilde x,\tilde y)}<2
		\leq \frac{2}{1-2\alpha}\abs{y-\tilde y}.
	\]
	That means, for this two cases,
	\[
		\abs{\Delta_g}\leq\frac{\Lip g}{1-\beta}\left[\abs{x-\tilde x}
			+\frac{2}{1-2\alpha}\abs{y-\tilde y}\right].
	\]
	To handle the remaining case, consider for each $k\in\set N_0$
	\begin{equation}\label{y_slices}
		(1-2\alpha)\alpha^k>\abs{y-\tilde y}\geq (1-2\alpha)\alpha^{k+1}.
	\end{equation}
	If $y$ and $\tilde y$ satisfy this condition for a $k\in\set N_0$, then
	they stay close together (in the sense that they are both in $[0,\alpha]$
	or in $[1-\alpha,1]$) for $k$ steps backward and we can split the sum
	in \eqref{Delta_g_estimation} into the first $k$ parts and the remaining 
	part.
	We get the following upper bound
	\begin{equation}\label{k_estimations}
		\frac{\abs{\Delta_g}}{\Lip g}
		\leq\sum\limits_{i=0}^k\frac{\beta^i}{2^{i+1}}\abs{x-\tilde x}
			+ \sum\limits_{i=0}^k\frac{\beta^i}{\alpha^{i+1}}\abs{y-\tilde y}
			+\frac{2}{1-\beta}\beta^{k+1}.
	\end{equation}
	The factor in front of $\abs{x-\tilde x}$ is smaller than 
	$\frac{1}{2-\beta}<\frac{1}{1-\beta}$, independent of $k$ and the 
	relation between $\alpha$ and $\beta$.

	For $\alpha>\beta$ we can get $k$-independent estimates for the other 
	two terms in \eqref{k_estimations},
	\[
		\sum\limits_{i=0}^k\frac{\beta^i}{\alpha^{i+1}}\abs{y-\tilde y}
		\leq\frac{1}{\alpha-\beta}\abs{y-\tilde y}
		\quad\textrm{ and }\quad
		\frac{2}{1-\beta}\beta^{k+1}
		\leq\frac{2}{(1-\beta)(1-2\alpha)}\abs{y-\tilde y},
	\]
	where we used \eqref{y_slices} for the third term.
	Hence, for all $(x,y,z,w),(\tilde x,\tilde y,\tilde z,\tilde w)\in \Auc$
	\begin{align*}
		\Abs{\hc(x,y,z,w)-\hc(\tilde x,\tilde y,\tilde z,\tilde w)}
		\leq&\left[1+\frac{\Lip g}{1-\beta}\right]\abs{x-\tilde x}\\
		+&\left[1+\Lip g\left[\frac{1}{\alpha-\beta}+
			\frac{2}{(1-\beta)(1-2\alpha)}\right]\right]\abs{y-\tilde y}\\
		+&\abs{z-\tilde z} + \abs{w-\tilde w}.
	\end{align*}
	The same result applies to $\hc[-1]$, i.e.\ $\hc$ is bi-Lipschitz for
	$\alpha>\beta$.

	For $\alpha=\beta$, using \eqref{y_slices}, the second term in 
	\eqref{k_estimations} is bounded above by
	\[
		\sum\limits_{i=0}^k\frac{\beta^i}{\alpha^{i+1}}\abs{y-\tilde y}
		=\frac{k+1}{\alpha}\abs{y-\tilde y}
		<\frac{1}{\alpha}\abs{y-\tilde y}
		+\frac{1}{\alpha\log\alpha}\abs{y-\tilde y}\log\abs{y-\tilde y}.
	\]
	For $\rho\in[0,1)$ we can always find a constant $C(\rho)>0$ such that
	\[
		-\abs{y-\tilde y}^{1-\rho}\log\abs{y-\tilde y}\leq C(\rho),
	\]
	i.e.\ for all $(x,y,z,w),(\tilde x,\tilde y,\tilde z,\tilde w)\in \Auc$
	\begin{align*}
		\Abs{\hc(x,y,z,w)-\hc(\tilde x,\tilde y,\tilde z,\tilde w)}
		\leq&\left[1+\frac{\Lip g}{1-\alpha}\right]\abs{x-\tilde x}\\
		+&\left[1+\frac{\Lip g}{\alpha}\left[1-\frac{C(\rho)}{\log\alpha}+
			\frac{2}{(1-\alpha)(1-2\alpha)}\right]\right]\abs{y-\tilde y}^\rho\\
		+&\abs{z-\tilde z} + \abs{w-\tilde w}.
	\end{align*}
	Again, the same result applies to $\hc[-1]$, i.e.\ $\hc$ and $\hc[-1]$ are 
	Hölder continuous for all Hölder exponents $\rho<1$ for $\alpha=\beta$.
	
	The last case is $\alpha<\beta$.
	Set
	\[
		\rho:=\frac{\log\beta}{\log\alpha}.
	\]
	We observe first that \eqref{y_slices} is equivalent to
	\begin{equation}\label{y_slices_equivalent}
		(1-2\alpha)^\rho\beta^k>\abs{y-\tilde y}^\rho
		\geq(1-2\alpha)^\rho\beta^{k+1}
	\end{equation}
	and therefore
	\[
		\left(\frac{\beta}{\alpha}\right)^{k+1}\abs{y-\tilde y}
		\leq\frac{(1-2\alpha)^{1-\rho}}{\alpha}\abs{y-\tilde y}^{\rho}.
	\]
	Furthermore, by using the relation
	\begin{equation*}
		\sum\limits_{i=0}^k q^k=\frac{q^{k+1}-1}{q-1},
	\end{equation*}
	for $q\neq 1$, we get
	\[
		\sum\limits_{i=0}^k\frac{\beta^i}{\alpha^{i+1}}\abs{y-\tilde y}
		<\frac{1}{\beta-\alpha}
			\left(\frac{\beta}{\alpha}\right)^{k+1}\abs{y-\tilde y}
		\leq\frac{(1-2\alpha)^{1-\rho}}{\alpha(\beta-\alpha)}
			\abs{y-\tilde y}^\rho.
	\]
	Hence, using \eqref{y_slices_equivalent} for the last term in 
	\eqref{k_estimations}, for all 
	$(x,y,z,w),(\tilde x,\tilde y,\tilde z,\tilde w)\in \Auc$
	\begin{align*}
		\Abs{\hc(x,y,z,w)-\hc(\tilde x,\tilde y,\tilde z,\tilde w)}
		\leq&\left[1+\frac{\Lip g}{1-\beta}\right]\abs{x-\tilde x}\\
		+&\left[1+\frac{\Lip g}{(1-2\alpha)^\rho}
			\left[\frac{1}{\alpha(\beta-\alpha)}+
			\frac{2}{(1-\beta)(1-2\alpha)}\right]\right]\abs{y-\tilde y}^\rho\\
		+&\abs{z-\tilde z} + \abs{w-\tilde w}
	\end{align*}
	(note that $(1-2\alpha)^{1-\rho}\leq(1-2\alpha)^{-\rho}$).
\end{proof}

There are two direct consequences of this proposition.
The first one pinpoints the physical measure $\muc$ of the coupled skinny
baker's map.
With the conjugacy $\hc$ we have a natural candidate, namely, the image
measure of $\muuc=\mu_\alpha\times\mu_\beta$.
The second consequence is that for $\alpha\geq\beta$ all the dimensions of the
attractor and the physical measure of the coupled and uncoupled system coincide.

\begin{lemma}
	For $g\in\Cone$ the image measure $\hc(\muuc)=:\muc$ is invariant under 
	$\Bc$ and is ergodic.
	It is the unique physical measure of $\Bc$, too.
\end{lemma}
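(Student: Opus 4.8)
The plan is to read invariance and ergodicity off Proposition \ref{proposition_conjugation} and to reserve the real work for the physical-measure statement. Since $\hc$ is a homeomorphism of $\Auc$ onto $\Ac$ with $\Bc\circ\hc=\hc\circ\Buc$ on $\Auc$ (cf.\ \eqref{conjugation}), since $\muuc$ is concentrated on $\Auc$, and since $\hc$ is thereby a Borel isomorphism of the measure-preserving system $(\Auc,\muuc,\Buc)$ onto $(\Ac,\muc,\Bc)$, the pushforward $\muc=\hc_*\muuc$ is $\Bc$-invariant; and as strong mixing is a property of the abstract system, $\muc$ is strong mixing, in particular ergodic.

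For the physical-measure claim I would prove the stronger statement that Lebesgue-a.e.\ $p\in M^2$ lies in the basin of $\muc$, i.e.\ $\frac1N\sum_{n<N}\varphi(\Bc^n p)\to\int\varphi\,d\muc$ for every bounded continuous $\varphi$ (it suffices to test uniformly continuous $\varphi$, as these determine weak convergence of the empirical measures $\frac1N\sum_{n<N}\delta_{\Bc^n p}$). Begin with the uncoupled map $\Buc$. Its $(x,z)$-component is the doubling map on $[0,1)^2$, which preserves Lebesgue measure ergodically, so by Birkhoff's theorem the $(x,z)$-orbit equidistributes for Lebesgue-a.e.\ $(x_0,z_0)$ (the dyadic points forming a null set); by the $g=0$ case of \eqref{definition_xyzw_n} the remaining coordinates of $\Buc^n p$ are, up to an error of order $\max(\alpha,\beta)^n$ caused by $y_0,w_0$ not lying on the attractor, the images of the reversed $(x,z)$-itinerary under the continuous codings of the Cantor sets $A_\alpha,A_\beta$. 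As, via these codings, $(\Auc,\muuc,\Buc)$ is isomorphic to the two-sided Bernoulli shift with its Bernoulli measure, equidistribution of the $(x,z)$-orbit is, through the coding, equidistribution of $\Buc^n p$ with respect to $\muuc$, and together with the uniform continuity of $\varphi$ and a Cesàro estimate this gives $\frac1N\sum_{n<N}\varphi(\Buc^n p)\to\int\varphi\,d\muuc$ for Lebesgue-a.e.\ $p$.

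To pass to $\Bc$, note from \eqref{definition_xyzw_n} that $\Bc^n p$ and $\Buc^n p$ agree in the $x,y,z$-coordinates and differ in the last one only by $G_n:=\sum_{i=0}^{n-1}\beta^{n-1-i}g(x_i,y_i)$. Writing $q_n\in\Auc$ for the point nearest to $\Buc^n p$, $\hat q_n$ for its $(x,y)$-component, and $\Sigma_g(x,y):=\sum_{i\geq0}\beta^i g(B_\alpha^{-i-1}(x,y))$ for the series defining $\hc$, one has $\dist(\Bc^n p,\hc(q_n))\leq\dist(\Buc^n p,q_n)+\abs{G_n-\Sigma_g(\hat q_n)}$. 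The first term tends to $0$ by the previous paragraph; reindexing $G_n$ and comparing it termwise with $\Sigma_g(\hat q_n)$ — the backward iterates of the genuine orbit and of $\hat q_n$ stay $O(\max(\alpha,\beta)^n)$-close for all but a bounded number of final steps, the latter absorbed by the trivial bound $2\supnorm g$ — the second term is at most $\Lip g$ times a geometric sum of precisely the type controlled in \eqref{Delta_g_estimation}--\eqref{k_estimations}, plus a tail $\supnorm g\,\beta^n/(1-\beta)$, and in each of the three cases of Proposition \ref{proposition_conjugation} this tends to $0$. Hence $\dist(\Bc^n p,\hc(q_n))\to0$; since $q_n$ equidistributes with respect to $\muuc$ and $\hc$ is uniformly continuous on $\Auc$, one more Cesàro/uniform-continuity step yields $\frac1N\sum_{n<N}\varphi(\Bc^n p)\to\int(\varphi\circ\hc)\,d\muuc=\int\varphi\,d\muc$. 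So $\muc$ is physical with a Lebesgue-full basin, and uniqueness is then immediate: the basin of any physical measure has positive Lebesgue measure and so meets the full basin of $\muc$, forcing the measures to coincide.

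The step I expect to be the main obstacle is fixing the basin for the uncoupled system: because $\muuc$ is singular with respect to Lebesgue measure, Birkhoff's theorem applied to $\muuc$ itself is of no use, and one must instead exploit that the underlying $(x,z)$-dynamics carries Lebesgue measure, together with the coding of the baker's attractor, to manufacture a genuinely Lebesgue-full basin; the subsequent transport through $\hc$ is a routine, if slightly lengthy, reprise of the $\Delta_g$-estimates already done for Proposition \ref{proposition_conjugation}.
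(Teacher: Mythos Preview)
Your proposal is essentially correct, but it takes a considerably longer route than the paper, and the detour stems from a misdiagnosis you flag yourself at the end. You write that Birkhoff's theorem applied to $\muc$ (or $\muuc$) is ``of no use'' because the measure is Lebesgue-singular; the paper's proof shows otherwise. The key observation you miss is that the $(x,z)$-marginal of $\muc$ is Lebesgue measure on $[0,1)^2$ (the conjugacy $\hc$ only alters the $w$-coordinate). So the paper applies Birkhoff directly to $(\Bc,\muc)$, obtaining a set $W$ with $\muc(W)=1$ on which averages converge, and then exploits a single elementary estimate: two points sharing the same $(x,z)$ have $\Bc$-orbits that converge to each other (the $y$-difference decays like $\alpha^n$, and the $w$-difference like $\beta^n$ plus a convolution term that also vanishes). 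Hence the basin contains every point whose $(x,z)$-projection lies in $\pi_{x,z}(W)$, and since $\lambda^2(\pi_{x,z}(W))=1$, the basin is Lebesgue-full.

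Compared with this, your approach first rebuilds the Lebesgue-full basin for the \emph{uncoupled} map via the Bernoulli coding of the attractor, and then transports it through $\hc$ by proving $|G_n-\Sigma_g(\hat q_n)|\to 0$ with a termwise comparison of the forward orbit against backward iterates of a nearby attractor point. This works (with the caveat that your phrase ``$O(\max(\alpha,\beta)^n)$-close for all but a bounded number of final steps'' is imprecise---the actual gap at step $j$ is of order $\alpha^{n-1-j}$, so one really needs the full $\sum_j\beta^j\alpha^{n-1-j}$ estimate, which is exactly the computation the paper uses for its fiber-contraction step), but it recreates the $\Delta_g$-type analysis a second time and requires the coding argument on top. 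The paper's route bypasses both: no coding, and the contraction estimate is done once, for $\Bc$ directly, using only that two initial conditions share their expanding coordinates.
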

\begin{proof}
	Since $\hc$ is a conjugacy on the support of $\muuc$, it follows that 
	$\muc$ is invariant under $\Bc$ and ergodic.
	Now, we want to prove that $\muc$ is the unique physical measure.
	We will show that \eqref{property_physical_measure} holds for Lebesgue
	almost every point.
	It is enough to prove \eqref{property_physical_measure} for all continuous
	functions with compact support \cite[Theorem 29.12 and Corollary 30.9]{Bauer1992}.
	Using \cite[Lemma 6.13]{Walters1982} and \cite[Lemma 31.4]{Bauer1992},
	we can find a set $W\in\balg{M^2}$ with $\muc(W)=1$ such that for all
	$(x,y,z,w)\in W$ and for every continuous function $\varphi:M^2\to\set R$
	with compact support,
	\begin{align}
		\label{physical_measure_condition}
		\lim\limits_{N\to\infty}\frac{1}{N}\sum\limits_{n=0}^{N-1}
			\varphi(\Bc^n(x,y,z,w))
		=\int\varphi d\muc.
	\end{align}
	Furthermore, for $(x,y,z,w),(x,\tilde y,z,\tilde w)\in M^2$ we have
	\[
		\lim\limits_{n\to\infty}\Abs{\Bc^n(x,y,z,w)-\Bc^n(x,\tilde y,z,\tilde w)}=0
	\]
	because (using the notation \eqref{definition_xyzw_n} from the previous proof)
	\[
		\abs{y_n-\tilde y_n}=\alpha^n\abs{y-\tilde y}
	\]
	and
	\[
		\abs{w_n-\tilde w_n}\leq\beta^n\abs{w-\tilde w}+\Lip{g}\abs{y-\tilde y}\cdot
		\begin{cases}
			\frac{\alpha^n}{\alpha-\beta}&\textnormal{ if }\alpha>\beta\\
			\alpha^{n-1}n&\textnormal{ if }\alpha=\beta\\
			\frac{\beta^n}{\beta-\alpha}&\textnormal{ if }\alpha<\beta
		\end{cases}.
	\]
	Therefore, we have for every continuous function $\varphi$ with compact
	support
	\[
		\lim\limits_{n\to\infty}\abs{\varphi(\Bc^n(x,y,z,w))
		-\varphi(\Bc^n(x,\tilde y,z,\tilde w))}=0,
	\]
	since $\varphi$ is also uniformly continuous.
	Hence, by using the Cesàro mean, we get
	\[
		\lim\limits_{N\to\infty}\abs{\frac{1}{N}\sum\limits_{n=0}^{N-1}
		\left[\varphi(\Bc^n(x,y,z,w))-
		\varphi(\Bc^n(x,\tilde y,z,\tilde w))\right]}=0.
	\]
	This means condition \eqref{physical_measure_condition} is fulfilled
	for all $(x,y,z,w)\in M^2$ with $(x,z)\in\pi_{x,z}(W)$, where $\pi_{x,z}$ is
	the canonical projection onto $[0,1)^2\subset\set R^2$.
	Note that $\pi_{x,z}(W)$ is Lebesgue measurable 
	\cite[Theorem 1.7.19 and Theorem 1.7.9]{KrantzParks2008}.
	Furthermore,
	\begin{align*}
		1&=\muc(W)\leq
		\mu(\{(x,y,z,w)\in ([0,1)\times[0,1])^2:(x,z)\in\pi_{x,z}(W)\})\\
		&=\lambda^2(\pi_{x,z}(W))\nu_\alpha([0,1])\nu_\beta([0,1])
		=\lambda^2(\pi_{x,z}(W))\leq 1,
	\end{align*}
	i.e.\ $\pi_{x,z}(W)$ has full measure in $[0,1)^2$.
	This and the property that $\lambda^4$ is $\sigma$-finite implies that the
	set of exceptions of \eqref{physical_measure_condition}	has zero measure.
	Therefore, for every continuous function $\varphi:M^2\to\set R$ with compact
	support,
	\[
		\lim\limits_{N\to\infty}\frac{1}{N}\sum\limits_{n=0}^{N-1}
			\varphi(\Bc^n(x,y,z,w))
		=\int\varphi d\muc,
	\]
	for almost every $(x,y,z,w)\in M^2$ with respect to the Lebesgue measure.
\end{proof}

Since the uncoupled case corresponds to $g=0$, we also get that 
$\muuc=\mu_\alpha\times\mu_\beta$ is the unique physical measure for the
uncoupled skinny baker's map $\Buc$.

The second consequence of Proposition \ref{proposition_conjugation} follows
directly for $\alpha>\beta$ from the bi-Lipschitz continuity of the conjugacy
and for $\alpha=\beta$ from the Hölder continuity of the conjugacy and its
inverse with arbitrary Hölder exponent $\rho<1$.

\begin{corollary}
	For $\alpha\geq\beta$ and $g\in\Cone$ we have that	$\muc$ is exact 
	dimensional with $d(\muc)=D_1(\muc)=d(\muuc)$.
	We get $D_B(\Ac)=D_B(\Auc)$, too (note that also other dimensions of the
	the physical measure and the attractor coincide).
\end{corollary}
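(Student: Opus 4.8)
The plan is to transport the dimension-theoretic facts we already have for the uncoupled system through the conjugacy of Proposition~\ref{proposition_conjugation}. Recall that $\muuc=\mu_\alpha\times\mu_\beta$ is exact dimensional with $d(\muuc)=2-\frac{\log 2}{\log\alpha}-\frac{\log 2}{\log\beta}$, that $\Auc$ carries a genuine box-counting dimension equal to the same number, and that by the preceding lemma $\muc=\hc(\muuc)$, so $\muc(B(\hc(p),r))=\muuc\big(\hc[-1](B(\hc(p),r))\big)$ for all $p\in\Auc$ and $r>0$. For $\alpha\geq\beta$, Proposition~\ref{proposition_conjugation} says $\hc$ and $\hc[-1]$ are either bi-Lipschitz ($\alpha>\beta$) or bi-Hölder with every exponent $\rho<1$ ($\alpha=\beta$); in both cases these maps distort radii of small balls in a controlled way, and that is all that is needed.

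For $\alpha>\beta$, let $L\geq 1$ be a common bi-Lipschitz constant. Then $B(p,r/L)\subseteq\hc[-1](B(\hc(p),r))\subseteq B(p,Lr)$, so $\muuc(B(p,r/L))\leq\muc(B(\hc(p),r))\leq\muuc(B(p,Lr))$; taking logarithms, dividing by $\log r$, and letting $r\to 0$ gives $\underline d(\muc,\hc(p))=\underline d(\muuc,p)$ and likewise for the upper pointwise dimension. Since $\hc$ is a Borel bijection with $\hc(\muuc)=\muc$, the $\muuc$-full set on which the pointwise dimension equals $d(\muuc)$ is carried to a $\muc$-full set, so $\muc$ is exact dimensional with $d(\muc)=d(\muuc)$, and then $D_1(\muc)=d(\muc)$ by Theorem~\ref{theorem_relation_pointwise_information_dimension}. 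Applying the same sandwich to covers — the $\hc$-image of a radius-$r$ ball sits inside a radius-$Lr$ ball — gives $N(\Ac,Lr)\leq N(\Auc,r)$ and, symmetrically, $N(\Auc,Lr)\leq N(\Ac,r)$, hence $\underline D_B(\Ac)=\underline D_B(\Auc)$ and $\overline D_B(\Ac)=\overline D_B(\Auc)$; as the uncoupled attractor has a box dimension, so does $\Ac$, and $D_B(\Ac)=D_B(\Auc)$.

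For $\alpha=\beta$, I would run the same argument with the Lipschitz estimate replaced by a Hölder one and then remove the resulting multiplicative loss by optimizing over the exponent. Fix $\rho\in(0,1)$ and let $C_\rho$ be a common $\rho$-Hölder constant for $\hc$ and $\hc[-1]$. Then $B\big(p,(r/C_\rho)^{1/\rho}\big)\subseteq\hc[-1](B(\hc(p),r))\subseteq B(p,C_\rho r^\rho)$, so $\muuc\big(B(p,(r/C_\rho)^{1/\rho})\big)\leq\muc(B(\hc(p),r))\leq\muuc(B(p,C_\rho r^\rho))$. Taking logarithms and dividing by $\log r$, and using $\frac{\log(C_\rho r^\rho)}{\log r}\to\rho$ and $\frac{\log((r/C_\rho)^{1/\rho})}{\log r}\to\frac1\rho$ while the remaining factor stays nonnegative, one obtains $\rho\,\underline d(\muuc,p)\leq\underline d(\muc,\hc(p))\leq\frac1\rho\,\underline d(\muuc,p)$ and the same with $\overline d$. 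Since $\rho<1$ is arbitrary, both pointwise dimensions of $\muc$ at $\hc(p)$ equal those of $\muuc$ at $p$, and the argument concludes as above. For the attractor I would invoke the standard fact that a $\rho$-Hölder map $\Phi$ satisfies $\underline D_B(\Phi(A))\leq\frac1\rho\underline D_B(A)$ and $\overline D_B(\Phi(A))\leq\frac1\rho\overline D_B(A)$; applying this to $\hc$ and $\hc[-1]$ yields $\rho\,D_B(\Auc)\leq\underline D_B(\Ac)\leq\overline D_B(\Ac)\leq\frac1\rho D_B(\Auc)$ for every $\rho<1$, hence $D_B(\Ac)=D_B(\Auc)$. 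That the other dimensions of $\muc$ and of $\Ac$ coincide with these common values is then automatic from exact dimensionality, as in \cite{Young1982}.

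There is no real obstacle here — the statement is a soft consequence of the regularity of $\hc$ — but the one spot requiring a little care is the case $\alpha=\beta$: a single Hölder exponent $\rho<1$ yields only a two-sided estimate with the gap $[\rho,1/\rho]$, and one must observe that it holds simultaneously for all $\rho<1$ and let $\rho\uparrow 1$. A minor technical wrinkle in that limit is that one factor in the products above converges to a positive constant while the other only has a $\liminf$ (resp.\ $\limsup$), which is harmless since the convergent factor can be pulled out of the $\liminf$/$\limsup$.
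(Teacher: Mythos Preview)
Your argument is correct and is exactly the approach the paper has in mind: the corollary is stated without proof beyond the remark that it ``follows directly for $\alpha>\beta$ from the bi-Lipschitz continuity of the conjugacy and for $\alpha=\beta$ from the H\"older continuity of the conjugacy and its inverse with arbitrary H\"older exponent $\rho<1$.'' You have simply written out the standard dimension-transport computations that make this precise, including the $\rho\uparrow 1$ squeeze in the borderline case.
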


Now, we calculate the Lyapunov exponents of the coupled skinny baker's map.

\begin{lemma}
	\label{lemma_lyapunov_exponents}
	For $g\in\Cone$ the Lyapunov exponents of the coupled skinny baker's map
	$\Bc$ are $\log 2$, $\log 2$, $\log\alpha$ and $\log\beta$ almost everywhere
	with respect to the Lebesgue measure and the physical measure.
\end{lemma}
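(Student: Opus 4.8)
The plan is to read the Lyapunov exponents off the block lower‑triangular derivative \eqref{derivative}, the only subtlety being the single off‑diagonal block. First I would observe that the $4\times4$ matrices with the sparsity pattern of \eqref{derivative} (free diagonal, free entries in positions $(4,1)$ and $(4,2)$, zeros elsewhere) form a semigroup under multiplication: diagonal entries multiply and the $(4,3)$ entry stays $0$. Hence, along any orbit that avoids $M^2_0$ (so that all derivatives exist), $D\Bc^n$ has this pattern with diagonal $(2^n,\alpha^n,2^n,\beta^n)$ and with off‑diagonal entries $p_n$ (position $(4,1)$) and $q_n$ (position $(4,2)$) obeying $p_{n+1}=2^n g_x(x_n,y_n)+\beta p_n$, $q_{n+1}=\alpha^n g_y(x_n,y_n)+\beta q_n$, $p_0=q_0=0$, where $(x_n,y_n,z_n,w_n):=\Bc^n(x,y,z,w)$. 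Solving these and using $g\in\Cone$ yields the \emph{uniform} bounds $\abs{p_n}\le\frac{\supnorm{g_x}}{2-\beta}\,2^n$ and $\abs{q_n}\le\supnorm{g_y}\sum_{k=0}^{n-1}\beta^{\,n-1-k}\alpha^k$, the last sum being $\abs{\alpha^n-\beta^n}/\abs{\alpha-\beta}$ if $\alpha\neq\beta$ and $n\alpha^{n-1}$ if $\alpha=\beta$; in every case it is at most a constant depending only on $\alpha,\beta$ times $\max(n,1)\max(\alpha,\beta)^n$. In particular $\limsup_{n}\frac1n\log\abs{p_n}\le\log2$ and $\limsup_{n}\frac1n\log\abs{q_n}\le\max(\log\alpha,\log\beta)<\log2$.

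Next I would use that, where they exist, the ordered Lyapunov exponents satisfy $\chi_1+\dots+\chi_k=\lim_n\frac1n\log\Abs{\Lambda^k D\Bc^n}$, and that $\Abs{\Lambda^k A}$ is comparable, up to the factor $\binom{4}{k}$, to the largest absolute value of a $k\times k$ minor of $A$. A short inspection of the minors of $D\Bc^n$ gives: the largest $1\times1$ minor is of order $2^n$; the largest $2\times2$ minor is $4^n$ (the minor on rows and columns $\{1,3\}$); the largest $3\times3$ minor, in absolute value, equals $4^n\max(\alpha,\beta)^n$ up to a polynomial‑in‑$n$ factor (the competitors of top exponential order are $4^n\alpha^n$, $4^n\beta^n$ and $4^n\abs{q_n}$, and $\abs{q_n}$ is, up to a polynomial factor, at most $\max(\alpha,\beta)^n$); and the $4\times4$ minor is exactly $\det D\Bc^n=4^n\alpha^n\beta^n$. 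Since the subexponential corrections vanish under $\frac1n\log(\cdot)$, this gives $\chi_1=\log2$, $\chi_1+\chi_2=2\log2$, $\chi_1+\chi_2+\chi_3=2\log2+\max(\log\alpha,\log\beta)$, $\chi_1+\chi_2+\chi_3+\chi_4=2\log2+\log\alpha+\log\beta$, hence $\chi_2=\log2$, $\chi_3=\max(\log\alpha,\log\beta)$, $\chi_4=\min(\log\alpha,\log\beta)$; as a multiset the exponents are $\{\log2,\log2,\log\alpha,\log\beta\}$.

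For the two ``almost everywhere'' clauses I would note that the estimates above hold along \emph{every} orbit that stays out of $M^2_0$ and, moreover, show that each $\frac1n\log\Abs{\Lambda^k D\Bc^n}$ genuinely converges there; so the Lyapunov exponents exist and take the stated values at every such point (in accordance with Oseledets' theorem applied to the ergodic measure $\muc$). This set of points has full $\lambda^4$‑measure since $\lambda^4(M^2_0)=0$, and full $\muc$‑measure because $\hc$ fixes the $x$‑ and $z$‑coordinates, so the $(x,z)$‑marginal of $\muc=\hc(\muuc)$ is Lebesgue measure on $[0,1)^2$, which gives zero mass to $M^2_0$.

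I expect the only genuinely delicate point to be the off‑diagonal entry $q_n$: when $\beta>\alpha$ it grows like $\beta^n$ and dominates the diagonal entry $\alpha^n$ in its own column, so the exponents cannot simply be read off the diagonal. The content of the argument is precisely the verification in the second step that this off‑diagonal growth neither pushes $\chi_1+\chi_2+\chi_3$ above $2\log2+\max(\log\alpha,\log\beta)$ nor alters the multiset of exponents --- which it cannot, since $\abs{q_n}$ is bounded, up to a polynomial factor, by $\max(\alpha,\beta)^n$, while the clean minors $4^n\alpha^n$ and $4^n\beta^n$ already realise the exponential order $4^n\max(\alpha,\beta)^n$.
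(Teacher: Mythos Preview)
Your argument is correct and reaches the same conclusion, but it proceeds along a genuinely different route from the paper. The paper computes $(\Bc^n)'$ in the same form you do (with your $p_n,q_n$ being its $a_n,b_n$), but then identifies the exponents by exhibiting explicit test vectors: $v=(c,0,d,0)$ for the double exponent $\log 2$, $v=(0,0,0,1)$ for $\log\beta$, and for the remaining exponent $\log\alpha$ it uses $v=(0,1,0,0)$ when $\alpha\geq\beta$ and, crucially, $v=(0,1,0,-b_\infty(x,y))$ with $b_\infty=\lim_n b_n/\beta^n$ when $\alpha<\beta$. Your approach instead extracts the partial sums $\chi_1+\cdots+\chi_k$ from $\frac1n\log\Abs{\Lambda^k D\Bc^n}$ via the largest $k\times k$ minors, which neatly sidesteps the case distinction in the choice of test vector and handles the ``delicate'' off-diagonal entry $q_n$ uniformly. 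What the paper's approach buys in return is the explicit Oseledets directions themselves: the vector $(0,1,0,-b_\infty)$ for $\alpha<\beta$ is precisely the weakly stable direction sketched later in Figure~\ref{stable_directions} and discussed in Section~4, information that your minor computation does not directly provide.
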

\begin{proof}
	We consider points $(x,y,z,w)\in M^2\backslash M^2_{0}$.
	We have $\lambda^4(M^2_0)=0$ and $\muc(M^2_0)=0$, too.
	To compute directly the Lyapunov exponents at a point we have to find lower
	and upper bounds for $\Abs{(\Bc^n)'(x,y,z,w) v}$ with $v\in\set R^4$.
	The derivative $(\Bc^n)'$ for all $n\in\set N$ is given by
	\[
		(\Bc^n)'(x,y,z,w)=\prod\limits_{k=1}^n\Bc'({\Bc^{n-k}(x,y,z,w)}).
	\]
	Using \eqref{derivative}, we get
	\[
		(\Bc^n)'(x,y,z,w)=
		\begin{pmatrix}
			2^n			&	0				&	0	&	0\\
			0			&	\alpha^n		&	0	&	0\\
			0			&	0				&	2^n	&	0\\
			a_n(x,y)	&	b_n(x,y)		& 	0 	&	\beta^n
		\end{pmatrix},
	\]
	with
	\[
		a_n(x,y)=\sum\limits_{i=0}^{n-1}2^i\beta^{n-1-i}g_x(\Buc_\alpha^i(x,y))
		\quad\textnormal{ and }\quad
		b_n(x,y)=\sum\limits_{i=0}^{n-1}\alpha^i\beta^{n-1-i}g_y(\Buc_\alpha^i(x,y)).
	\]
	Observe that $\log 2$ is a Lyapunov exponent with at least multiplicity 2.
	This is true because
	\[
		\Abs{(\Bc^n)'(x,y,z,w)v}^2=2^{2n}+(a_n(x,y)\cdot c)^2\leq
		(1+\Abs{g_x}_{\infty}^2 c^2)2^{2n}
	\]
	for all $v=(c,0,d,0)$, $\Abs{v}=1$.
	We get the Lyapunov exponent $\log\beta$ for $v=(0,0,0,1)$.
	For $\alpha\geq\beta$ and $v=(0,1,0,0)$ we get the last Lyapunov exponent
	$\log\alpha$ because 
	\[
		\abs{b_n(x,y)}\leq\Abs{g_y}_{\infty}\alpha^{n-1}
		\begin{cases}
			\frac{\alpha}{\alpha-\beta}&\textnormal{ if }\alpha>\beta\\
			n&\textnormal{ if }\alpha=\beta
		\end{cases}
	\]
	(for $\alpha=\beta$ we have for all $v=(0,c,0,d)$, $\Abs{v}=1$, $c\neq 0$
	that $\Abs{(\Bc^n)'(x,y,z,w)v}>\alpha^n\abs{c}$ and therefore $\log\alpha$
	has multiplicity 2).
	For $\alpha<\beta$ set $b_{\infty}(x,y):=\lim\limits_{n\to\infty}b_n(x,y)/\beta^n$
	and	use $v=(0,1,0,-b_{\infty}(x,y))$.
	Again, we get $\log\alpha$ because
	\[
		\abs{b_n(x,y)-\beta^n b_{\infty}(x,y)}
		\leq\frac{\Abs{g_y}_{\infty}}{\beta-\alpha}\alpha^n.
	\]
	All this together proves the assertion.
\end{proof}

Note that the $w$-direction corresponds to the smallest Lyapunov exponent for
$\alpha>\beta$, but not for $\alpha<\beta$.

\begin{corollary}\label{corollary_D_L}
	The Lyapunov dimension of the coupled system coincides with the Lyapunov
	dimension of the uncoupled system for every $g\in\Cone$, i.e.\
	\[
		D_L(\muc)=D_L(\muuc)=:D_L.
	\]
	This implies for $\alpha\geq\beta$ that the relation between
	the	information	dimension and Lyapunov dimension of the coupled	system is the
	same as for the uncoupled system, i.e $D_1(\muc)<D_L$ for $\alpha>\beta$ and 
	$D_1(\muc)=D_L$ for $\alpha=\beta$.
\end{corollary}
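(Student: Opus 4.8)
The plan is to read $D_L$ off the Lyapunov spectrum directly, using that it is a function of the ordered exponents alone. By Lemma \ref{lemma_lyapunov_exponents} the Lyapunov exponents of $\Bc$ with respect to $\muc$ are $\log 2,\log 2,\log\alpha,\log\beta$, and the same list (after reordering) was recorded above for $\Buc$ with respect to $\muuc$. Both $\muc$ and $\muuc$ are ergodic and give zero mass to $M^2_0$, so the definition of $D_L$ applies to each; since that definition depends on the measure only through the decreasing rearrangement $\chi_1(\mu)\ge\dots\ge\chi_4(\mu)$ — via the index $j$ and the ratio $(\chi_1(\mu)+\dots+\chi_j(\mu))/\abs{\chi_{j+1}(\mu)}$ — and the two spectra coincide as multisets, I conclude $D_L(\muc)=D_L(\muuc)=:D_L$. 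No computation beyond Lemma \ref{lemma_lyapunov_exponents} is needed; the only bookkeeping point is that the cutoff index $j$ is the same for both, which is automatic from equality of the spectra.

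For the second assertion I would combine two facts already available. First, the Corollary following Proposition \ref{proposition_conjugation} shows that for $\alpha\ge\beta$ the conjugacy $\hc$ (bi-Lipschitz when $\alpha>\beta$, bi-H\"older with every exponent $\rho<1$ when $\alpha=\beta$) transports the relevant dimensions, so $\muc$ is exact dimensional with $d(\muc)=D_1(\muc)=d(\muuc)=D_1(\muuc)$. Second, the explicit identities recorded before Figure \ref{relation_of_information_dimension_Lyapunov_dimension} give $D_1(\muuc)=d(\muuc)<D_L(\muuc)$ when $\alpha\neq\beta$ and $D_1(\muuc)=D_L(\muuc)$ when $\alpha=\beta$. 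Substituting $D_1(\muc)=D_1(\muuc)$ and $D_L(\muuc)=D_L$ yields $D_1(\muc)<D_L$ for $\alpha>\beta$ and $D_1(\muc)=D_L$ for $\alpha=\beta$, which is exactly the claimed implication.

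There is essentially no obstacle: all the analytic work sits in Lemma \ref{lemma_lyapunov_exponents} (the coupling leaves the exponents unchanged) and in the earlier dimension Corollary (for $\alpha\ge\beta$ the coupling leaves the pointwise and information dimensions unchanged). The mild subtlety is that for $\alpha<\beta$ this Corollary gives only the equality $D_L(\muc)=D_L$ and says nothing about comparing $D_1$ with $D_L$ in that regime; that comparison is the content of the prevalence result (Theorem \ref{prevelant_result}) and is deliberately left to the next step.
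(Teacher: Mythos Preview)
Your proposal is correct and matches the paper's approach: the corollary is stated without proof there because it follows immediately from Lemma \ref{lemma_lyapunov_exponents} (equal Lyapunov spectra, hence equal $D_L$) together with the preceding dimension corollary (for $\alpha\ge\beta$ the conjugacy preserves $D_1$), exactly as you argue.
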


Now, what is left is the case $\alpha<\beta$.
With the next 4 assertions we will prove that $d(\muc)=D_1(\muc)=D_L$ in a
prevalent sense, where we will mainly use the potential theoretic method for the
pointwise dimension, see Theorem \ref{pointwise_dimension_equal_sup_s_potential}.
The main step will be to establish a lower bound for the pointwise dimension,
and in order to this we will show that for a prevalent set of $g$'s the
following is true: for all $0\leq s<D_L$, the $s$-potential $\varphi_s(\muc,v)$
is finite for \mucae{$v$} in the attractor $\Ac$ (defined in Proposition 
\ref{proposition_conjugation}).
The corresponding set of exceptions is
\begin{multline}
	\label{shy_set}
	E:=\{g\in\Cone:\textnormal{ there are }0\leq s<D_L \textnormal{ and }\\
		W\subset\Ac\textnormal{ with }\muc(W)>0
		\textnormal{ such that }\varphi_s(\muc,v)=\infty
		\textnormal{ for all }v\in W\}.
\end{multline}
In Theorem \ref{prevelant_result} we will embed $E$ into a bigger Borel set and
we will show that Lebesgue measure supported on the finite dimensional subspace
\begin{align}
	\label{definition_probe}
	P:=\{(x,y)\mapsto \lambda\cdot p(x,y):\lambda\in\set R\}\subset\Cone
\end{align}
is transverse to this bigger set, where we require $p(x,y)=y$ for all 
$(x,y)\in[0,1)\times[0,1]$.

Before we proceed, we want to motivate why we are choosing this prevalent
setting.
To use Theorem \ref{pointwise_dimension_equal_sup_s_potential}, we have to 
estimate
\begin{align*}
	\varphi_s(\muc,v)
	=\int\limits_{\Ac}\frac{1}{\abs{v-\tilde v}^s}d\muc(\tilde v)
	&=\int\limits_{\Auc}\frac{1}{\abs{\hc(x,y,z,w)-
		\hc(\tilde x,\tilde y,\tilde z,\tilde w)}^s}
		d\muuc(\tilde x,\tilde y,\tilde z,\tilde w)\\
	&=\int\limits_{\Auc}\frac{1}{(\Iuc^2+\Ic^2)^{s/2}}
		d\muuc(\tilde x,\tilde y,\tilde z,\tilde w),
\end{align*}
for $v\in\Ac$, $(x,y,z,w)=\hc[-1](v)$ and with
\begin{align*}
	\Iuc^2 &:=(x-\tilde x)^2+(y-\tilde y)^2+(z-\tilde z)^2,\\
	\Ic &:= w-\tilde w+ 
	\sum_{i=0}^{\infty}\beta^i\left[ g\left(B_\alpha^{-i-1}(x,y)\right)
	-g\left(B_\alpha^{-i-1}(\tilde x,\tilde y)\right)\right].
\end{align*}
It is very difficult to estimate this integral for an arbitrary $g\in\Cone$.
The notion of prevalence comes here into play, namely, by adding a linear
perturbation term to $g$,
\begin{align*}
	\gp(x,y):=g(x,y)+\lambda\cdot p(x,y),
\end{align*}
for $\lambda\in\set R$ and estimating the following integral
\begin{align}
	\label{main_integral}
	\int\limits_{-\lambda_0}^{\lambda_0}
		\varphi_s(\mup,\hcp(v))d\lambda
	&=\int\limits_{-\lambda_0}^{\lambda_0}\;\int\limits_{\Auc}\frac{1}{(\Iuc^2+
		(\Ic+\lambda\Ip)^2)^{s/2}}d\muuc(\tilde x,\tilde y,\tilde z,\tilde w)
		d\lambda,
\end{align}
for $v=(x,y,z,w)\in\Auc$ and (using the notation
\eqref{definition_xy_n_past_history} from the proof of Proposition
\ref{proposition_conjugation})
\[
	\Ip:=\sum_{i=0}^{\infty}\beta^i\left[y_{-i-1}-\tilde y_{-i-1}\right].
\]
Now, the main ingredient is to use Tonelli's theorem. 
As a result, the order of integration in \eqref{main_integral} can be interchanged
and the estimation of the inner integral over $\lambda$ is feasible.

The next proposition will be the major technical step in the proof of Theorem
\ref{prevelant_result}. 
But first, we need the following lemma.

\begin{lemma}
	\label{lemma_I_p_estimations}
	Assume $\alpha<\beta$.
	For sufficient small $\abs{y-\tilde y}$ with
	$y,\tilde y\in A_\alpha$ we have that $\Ip$ is bounded from above and below,
	\[
		K_1\abs{y-\tilde y}^\rho\geq\abs{\Ip}\geq K_2\abs{y-\tilde y}^\rho,
	\]
	with $\rho=\frac{\log\beta}{\log\alpha}$ and $K_1,K_2>0$.
\end{lemma}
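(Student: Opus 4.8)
The plan is to analyze $\Ip = \sum_{i=0}^\infty \beta^i\big(y_{-i-1} - \tilde y_{-i-1}\big)$ directly from the backward-orbit structure of $B_\alpha$, exactly as in the Hölder estimate for $\hc$ in Proposition~\ref{proposition_conjugation}. Recall that $y_{-i-1}$ is obtained from $y$ by applying $B_\alpha^{-1}$ $i+1$ times, and each application either divides by $\alpha$ (possibly after subtracting $1-\alpha$) or does nothing in the $y$-coordinate — more precisely, if $y,\tilde y$ lie in the same atom of the $k$-th level partition of $A_\alpha$ (both in $[0,\alpha]$ or both in $[1-\alpha,1]$ at each of the first $k$ backward steps), then $y_{-j-1}-\tilde y_{-j-1} = \alpha^{-j-1}(y-\tilde y)$ for $j=0,\dots,k-1$, while $\lvert y_{-j-1}-\tilde y_{-j-1}\rvert \le 1$ for $j \ge k$. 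For sufficiently small $\lvert y-\tilde y\rvert$ there is a well-defined integer $k = k(y,\tilde y)$ determined by the dyadic-type slicing $(1-2\alpha)\alpha^k > \lvert y-\tilde y\rvert \ge (1-2\alpha)\alpha^{k+1}$, as in~\eqref{y_slices}; here I would invoke that $y,\tilde y\in A_\alpha$ so that ``staying in the same atom'' is exactly what this slicing controls.

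\textbf{Upper bound.} Split the sum at index $k$:
\begin{align*}
	\lvert\Ip\rvert
	&\le \sum_{i=0}^{k-1}\beta^i \cdot \frac{\lvert y-\tilde y\rvert}{\alpha^{i+1}}
		+ \sum_{i=k}^{\infty}\beta^i \cdot 1
	\le \frac{\lvert y-\tilde y\rvert}{\alpha}\sum_{i=0}^{\infty}\Big(\frac{\beta}{\alpha}\Big)^{i}\frac{1}{1}\Big|_{\text{truncated}}
		+ \frac{\beta^k}{1-\beta},
\end{align*}
and then use the geometric-series identity together with $\big(\frac{\beta}{\alpha}\big)^{k}\lvert y-\tilde y\rvert \le \frac{(1-2\alpha)^{1-\rho}}{1}\lvert y-\tilde y\rvert^{\rho}$ and $\beta^{k} \le (1-2\alpha)^{-\rho}\lvert y-\tilde y\rvert^{\rho}$, both of which follow from the equivalent slicing~\eqref{y_slices_equivalent}. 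This is precisely the computation already carried out for $\Delta_g$ in the $\alpha<\beta$ case of Proposition~\ref{proposition_conjugation}, with $\Lip g$ replaced by $1$, so $\lvert\Ip\rvert \le K_1\lvert y-\tilde y\rvert^{\rho}$ with an explicit $K_1$ depending only on $\alpha,\beta$.

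\textbf{Lower bound.} This is the genuinely new part and I expect it to be the main obstacle. The upper estimate discards sign information, but for a lower bound one must show the sum cannot cancel. The key observation is that the first $k$ terms all have the \emph{same sign} as $y-\tilde y$ (since $y_{-i-1}-\tilde y_{-i-1} = \alpha^{-i-1}(y-\tilde y)$ for $i<k$) and grow geometrically with ratio $\beta/\alpha > 1$, so the $i=k-1$ term dominates the entire head: $\big|\sum_{i=0}^{k-1}\beta^i(y_{-i-1}-\tilde y_{-i-1})\big| \ge \beta^{k-1}\alpha^{-k}\lvert y-\tilde y\rvert \cdot \frac{1}{1-\alpha/\beta}$ after bounding the geometric tail of smaller terms downward, i.e.\ it is comparable to $\big(\frac{\beta}{\alpha}\big)^{k}\lvert y-\tilde y\rvert$, which by~\eqref{y_slices_equivalent} is comparable to $\lvert y-\tilde y\rvert^{\rho}$ from below. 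The remaining tail $\sum_{i\ge k}\beta^i(y_{-i-1}-\tilde y_{-i-1})$ is bounded in absolute value by $\beta^k/(1-\beta)$, which is $O(\lvert y-\tilde y\rvert^{\rho})$ but with a \emph{smaller} constant than the head provided $\lvert y-\tilde y\rvert$ is small enough and provided one chooses the slicing constant so that $\beta^{k}/(1-\beta)$ is, say, less than half the head bound; if the constants don't separate automatically one shrinks the threshold on $\lvert y-\tilde y\rvert$ and absorbs the discrepancy, or notes that the head's lower constant $\big(\frac{\beta}{\alpha}\big)$-growth beats any fixed geometric tail by taking $k$ large, i.e.\ $\lvert y - \tilde y\rvert$ small. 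Reverse triangle inequality then gives $\lvert\Ip\rvert \ge K_2\lvert y-\tilde y\rvert^{\rho}$. The one subtlety to be careful about: at the transition step $i=k$ the points $y_{-k-1},\tilde y_{-k-1}$ may land in opposite atoms, so one should define $k$ as the largest index for which they stay together and verify the slicing inequality~\eqref{y_slices} indeed characterizes this $k$ for $y,\tilde y\in A_\alpha$; this is where the hypothesis $y,\tilde y\in A_\alpha$ (rather than arbitrary reals) is essential, since only on the Cantor set is the backward itinerary rigid enough for the head-domination argument.
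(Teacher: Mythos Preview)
Your strategy is exactly the paper's: split $\Ip$ at the index $k$ determined by~\eqref{y_slices}, use the exact formula $y_{-i-1}-\tilde y_{-i-1}=(y-\tilde y)/\alpha^{i+1}$ on the head, bound the tail crudely by $1$ termwise, and apply the reverse triangle inequality. The upper bound is fine and is indeed just the $\Delta_g$ estimate from Proposition~\ref{proposition_conjugation} with the Lipschitz constant replaced by~$1$.

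The lower bound, however, has a real gap. Both the head and the tail are of order $\beta^k\asymp\lvert y-\tilde y\rvert^\rho$: the head is $\frac{\lvert y-\tilde y\rvert}{\beta-\alpha}\big[(\beta/\alpha)^{k}-1\big]$, and since $\lvert y-\tilde y\rvert\asymp\alpha^k$ this is $\asymp\beta^k$, while your tail bound is $\beta^k/(1-\beta)$. So the ratio head/tail does \emph{not} improve as $k\to\infty$, and neither of your fallback arguments works: shrinking the threshold on $\lvert y-\tilde y\rvert$ just increases $k$ without changing the ratio, and there is no ``$(\beta/\alpha)$-growth'' of the head relative to the tail. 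You must actually compare the two constants, and with your splitting (head $i=0,\dots,k-1$, tail $i\ge k$) they do \emph{not} separate in general: for instance with $\alpha=0.1$, $\beta=0.4$ the head constant $\frac{(1-2\alpha)\alpha}{\beta-\alpha}\approx 0.27$ is far smaller than the tail constant $\frac{1}{1-\beta}\approx 1.67$.

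The fix is an off-by-one correction: the exact formula $y_{-i-1}-\tilde y_{-i-1}=(y-\tilde y)/\alpha^{i+1}$ actually holds for $i=0,\dots,k$ (not just $k-1$) under~\eqref{y_slices}, so you should put the $i=k$ term---the largest one---into the head. Then the head becomes $\frac{\lvert y-\tilde y\rvert}{\beta-\alpha}\big[(\beta/\alpha)^{k+1}-1\big]\ge\frac{(1-2\alpha)}{\beta-\alpha}\beta^{k+1}-\frac{\lvert y-\tilde y\rvert}{\beta-\alpha}$ and the tail is $\le\beta^{k+1}/(1-\beta)$. Now the comparison $\frac{1-2\alpha}{\beta-\alpha}>\frac{1}{1-\beta}$ is equivalent to $(1-\alpha)(1-2\beta)>0$, which holds precisely because $\beta<\tfrac12$. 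This is where the standing hypothesis $\beta<\tfrac12$ enters, and it is why the paper's explicit constant $K_2$ carries the factor $(1-2\beta)$. The remaining term $\lvert y-\tilde y\rvert/(\beta-\alpha)$ is $O(\lvert y-\tilde y\rvert)$, genuinely lower order than $\lvert y-\tilde y\rvert^\rho$, and \emph{this} is what is absorbed by taking $\lvert y-\tilde y\rvert$ small.
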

\begin{proof}
	We can use the estimate of $\Delta_g$ from the proof of Proposition
	\ref{proposition_conjugation}, with $g$ replaced by $y$, for the upper bound.
	Choosing $k$ so that \eqref{y_slices} holds, we have for the lower bound
	\begin{align}\label{I_p_lower_bound}
		\abs{\Ip}&\geq\abs{
			\abs{\sum_{i=0}^{k}\beta^i\left[y_{-i-1}-
			\tilde y_{-i-1}\right]}
			-\abs{\sum_{i=k+1}^{\infty}\beta^i\left[y_{-i-1}-
			\tilde y_{-i-1}\right]}}\notag\\
		&=\abs{\frac{\abs{y-\tilde y}}{\beta-\alpha}\left(
			\left(\frac{\beta}{\alpha}\right)^{k+1}-1\right)-\abs{R}},
	\end{align}
	with
	\[
		R:=\sum_{i=k+1}^{\infty}\beta^i\left[y_{-i-1}-\tilde y_{-i-1}\right].
	\]
	Also, from \eqref{y_slices} and \eqref{y_slices_equivalent} follows
	\[
		\left(\frac{\beta}{\alpha}\right)^{k+1}\abs{y-\tilde y}
		\geq\beta(1-2\alpha)^{1-\rho}\abs{y-\tilde y}^{\rho}.
	\]
	Furthermore, using \eqref{y_slices} again,
	\[
		\frac{\abs{y-\tilde y}}{\beta-\alpha}\left(
			\left(\frac{\beta}{\alpha}\right)^{k+1}-1\right)-\abs{R}
		\geq\frac{\abs{y-\tilde y}}{\beta-\alpha}\left(
			(1-\alpha)(1-2\beta)\left(\frac{\beta}{\alpha}\right)^{k+1}-1\right).
	\]
	By choosing $k$ large enough,
	\[
		\frac{(1-\alpha)(1-2\beta)}{2}\left(\frac{\beta}{\alpha}\right)^{k+1}>1,
	\]
	i.e.\ $\abs{y-\tilde y}$ sufficient small, the term in the absolute value in 
	\eqref{I_p_lower_bound} is always positive and we have
	$\abs{\Ip}\geq K_2\abs{y-\tilde y}^{\rho}$ where
	\[
		K_2:=\frac{\beta(1-2\alpha)^{1-\rho}(1-\alpha)(1-2\beta)}{2(\beta-\alpha)}.
		\qedhere
	\]
\end{proof}

\begin{proposition}
	\label{main_proposition}
	Suppose $\alpha<\beta$ and $g\in\Cone$.
	For $1<s<D_L$, $\lambda_0>0$ and $v\in\Auc$ we have
	\begin{align*}
		\int\limits_{-\lambda_0}^{\lambda_0}\varphi_s(\mup,\hcp(v))d\lambda
		\leq& C_{I_0}+\sum\limits_{k=k_0}^{\infty}
			\Bigg( C_{L_1}\alpha^{k(2-2\frac{\log 2}{\log\beta}-s)}+\\
		&k\,C_{L_2}\max\left\{\alpha^{k(2-2\frac{\log 2}{\log\beta}-s)},
			\alpha^{k\frac{s}{s-1+\rho}(3-2\frac{\log 2}{\log\alpha}-
			\rho-s)}\right\}\Bigg)<\infty,
	\end{align*}
	where $\rho=\frac{\log\beta}{\log\alpha}$ and
	$C_{I_0},C_{L_1},C_{L_2}$ are positive constants.
\end{proposition}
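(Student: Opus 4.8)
The plan is to apply Tonelli's theorem to \eqref{main_integral} in order to bring the $\lambda$-integral inside, estimate the innermost integral in two complementary ways, and then run a multiscale decomposition of the remaining integral over $\Auc$, which at the end reduces to a geometric-type series converging precisely for $s<D_L$. For fixed $(\tilde x,\tilde y,\tilde z,\tilde w)$ the substitution $\psi=\Ic+\lambda\Ip$ gives, since $s>1$,
\[
	\int_{-\lambda_0}^{\lambda_0}\frac{d\lambda}{(\Iuc^2+(\Ic+\lambda\Ip)^2)^{s/2}}
	\ \le\ \min\Big\{\frac{C_s}{\abs{\Ip}\,\Iuc^{\,s-1}},\ \frac{2\lambda_0}{\Iuc^{\,s}}\Big\},
\]
the first bound by extending the $\psi$-range to all of $\set R$, the second by bounding the integrand pointwise by $\Iuc^{-s}$. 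The decisive refinement replaces the second bound: as $(\tilde w,\lambda)$ ranges over $A_\beta\times[-\lambda_0,\lambda_0]$ the quantity $\Ic+\lambda\Ip$ ranges over a translate of the Minkowski sum $A_\beta+[-\lambda_0\Ip,\lambda_0\Ip]$, and since $\lambda_0\abs{\Ip}$ is of order $\abs{y-\tilde y}^\rho$ by Lemma \ref{lemma_I_p_estimations}, this sum has had all gaps of $A_\beta$ below that scale filled in; hence the joint push-forward of $\nu_\beta\times\lambda^1$ is comparable to Lebesgue measure on a union of $\beta$-cylinders of size $\asymp\abs{y-\tilde y}^\rho$, and integrating $(\Iuc^2+\psi^2)^{-s/2}$ against it gains the factor ``$\nu_\beta$-mass of one such cylinder'' whenever $\Iuc$ is below the cylinder scale. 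This smearing, the effect of averaging over the perturbation $\lambda p$, is exactly what makes the estimate valid up to $D_L$; without it one would only recover the uncoupled box-counting exponent.

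Next, use Lemma \ref{lemma_I_p_estimations} to replace $\abs{\Ip}$ by $\abs{y-\tilde y}^\rho$ up to constants whenever $\abs{y-\tilde y}<\delta_0$. On $\{\abs{y-\tilde y}\ge\delta_0\}$ one has $\Iuc\ge\delta_0$, so the whole integrand is at most $\delta_0^{-s}$ and the contribution is the constant $C_{I_0}$. On $\{\abs{y-\tilde y}<\delta_0\}$ decompose dyadically according to the effective image scale
\[
	\max\big\{\abs{x-\tilde x},\ \abs{z-\tilde z},\ \abs{y-\tilde y}^\rho\big\}\ \asymp\ \alpha^{k},\qquad k\ge k_0,
\]
the $w$-direction being Hölder-$\rho$ in $y$ (cf.\ Proposition \ref{proposition_conjugation}(iii)). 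Each such shell is a union of $\asymp k$ pieces indexed by the scale $\Iuc\asymp\alpha^{j}$, with $j$ running between $k$ and $\asymp k/\rho$, and the relevant measure inputs on a piece are: Lebesgue measure of the $(\tilde x,\tilde z)$-region $\asymp\alpha^{2j}$, $\nu_\alpha$-measure in $\tilde y$ $\asymp 2^{-k/\rho}$ (the critical $y$-scale being $\alpha^{k/\rho}$), and $\nu_\beta$-mass of the smearing cylinder $\asymp 2^{-k/\rho}$; here one uses Ahlfors regularity of the two Cantor measures, i.e.\ $\alpha^{-\log 2/\log\alpha}=\beta^{-\log 2/\log\beta}=\tfrac12$, together with $\beta^{1/\rho}=\alpha$.

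Feeding the bounds of the first paragraph into each piece — the first (refined) bound where $\Iuc$ is small, the plain bound where $\Iuc$ is large, with the crossover occurring where the two agree, an equation whose solution produces the exponent factor $\tfrac{s}{s-1+\rho}$ — and bounding the sum of the $\asymp k$ pieces by $k$ times the largest, yields exactly the asserted shell term
\[
	C_{L_1}\alpha^{k\left(2-2\frac{\log 2}{\log\beta}-s\right)}
	+k\,C_{L_2}\max\Big\{\alpha^{k\left(2-2\frac{\log 2}{\log\beta}-s\right)},\ \alpha^{k\frac{s}{s-1+\rho}\left(3-2\frac{\log 2}{\log\alpha}-\rho-s\right)}\Big\}.
\]
Summing over $k\ge k_0$ converges iff both exponents are positive, i.e.\ iff $s<2-2\frac{\log 2}{\log\beta}$ and $s<3-2\frac{\log 2}{\log\alpha}-\rho$; using $\log 2/\log\beta<\log 2/\log\alpha<0$ and $0<\rho<1$ one checks (exactly as was already done for the uncoupled system) that both are consequences of $s<D_L$, the binding one being the first when $\beta<\tfrac14$ and the second, which then equals $D_L$, when $\beta\ge\tfrac14$. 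Adding the constant $C_{I_0}$ gives the stated finite bound.

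The main obstacle is the multiscale bookkeeping just sketched: organizing the $(\tilde x,\tilde z)$-, $\tilde y$- and $\tilde w$-integrations so that the $\lambda$-averaging is exploited at exactly the right scale — this is what produces the second exponent and beats the box-counting value — and making the Minkowski-sum/smearing estimate quantitative with constants uniform in the base point. By comparison, the two bounds on the $\lambda$-integral in the first paragraph and the final summation are routine.
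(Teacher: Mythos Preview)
Your overall strategy---Tonelli, two elementary bounds on the $\lambda$-integral, Ahlfors regularity of $\nu_\alpha$ and $\nu_\beta$, and a geometric sum---matches the paper's, but the execution differs in two substantive ways. First, the paper does \emph{not} integrate $(\tilde w,\lambda)$ jointly via a Minkowski/convolution argument; instead it introduces a \emph{third} pointwise bound on the $\lambda$-integral,
\[
\int_{-\lambda_0}^{\lambda_0}\frac{d\lambda}{(\Iuc^2+(\Ic+\lambda\Ip)^2)^{s/2}}\le\frac{2\lambda_0}{(\Iuc^2+(\abs{\Ic}-\lambda_0\abs{\Ip})^2)^{s/2}}\quad\text{when }\abs{\Ic}\ge(1+\eta)\lambda_0\abs{\Ip},
\]
which, combined with the two bounds you wrote, yields a direct pointwise constraint $\abs{w-\tilde w}\lesssim\alpha^k+\abs{y-\tilde y}^\rho$; the $\tilde w$-integration is then an ordinary $\nu_\beta$-ball estimate. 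Second, the paper's decomposition is not a spatial one but a \emph{level-set} decomposition: the sets $S_k$ are defined by the value of the $\lambda$-integral lying in $[(1-2\alpha)^{-s}\alpha^{-sk},(1-2\alpha)^{-s}\alpha^{-s(k+1)}]$, and the three bounds then translate membership in $S_k$ into the coordinate constraints \eqref{improvement_y}--\eqref{improvement_w}. The factor $s/(s-1+\rho)$ arises there transparently, from solving $\abs{y-\tilde y}^\rho\,\Iuc^{\,s-1}\lesssim\alpha^{sk}$ together with $\Iuc\ge\abs{y-\tilde y}$. Your convolution idea is a legitimate alternative to the third bound and should give the same convergence threshold, but as written it is only a heuristic: the ``crossover'' you invoke does not by itself produce the exponent $\tfrac{s}{s-1+\rho}$ from the shell decomposition you set up, and the claim that the $\nu_\alpha$- and $\nu_\beta$-inputs on every piece are both $\asymp 2^{-k/\rho}$ holds only at the extremal $y$-scale $\alpha^{k/\rho}$, not throughout the shell. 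To actually recover the stated bound you would need to carry out the $(\tilde x,\tilde y,\tilde z)$-integration against $\min\{|\Ip|^{d_\beta-1}\Iuc^{-(s-1)},\,\Iuc^{d_\beta-s}\}$ explicitly, splitting at $\Iuc\asymp|\Ip|$ and then at the scale where the $\tilde y$-integral changes regime; this is doable but is precisely the ``multiscale bookkeeping'' you flag as the main obstacle and leave undone.
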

\begin{proof}
	First, note that if $s>1$ and $0<\rho<1$, then
	\[
		\frac{1}{\rho}>\frac{s}{s-1+\rho}>1>\frac{s\rho}{s-1+\rho}>\rho.
	\]
	As stated above, we can change the order of integration in 
	\eqref{main_integral} to get
	\begin{align}
		\label{main_integral_first_estimate}
		\int\limits_{-\lambda_0}^{\lambda_0}
		\varphi_s(\mup,\hcp(v))d\lambda
		&=\int\limits_A d\muuc(\tilde x,\tilde y,\tilde z,\tilde w)
			\int\limits_{-\lambda_0}^{\lambda_0}
			\frac{d\lambda}{(\Iuc^2+(\Ic+\lambda \Ip)^2)^{s/2}}\notag\\
		&\leq C_{I_0}+
			\sum\limits_{k=k_0}^{\infty}\frac{2\lambda_0}{(1-2\alpha)^s
			\alpha^{s(k+1)}}\muuc(S_k),
	\end{align}
	with
	\[
		S_k:=\left\{(\tilde x,\tilde y,\tilde z,\tilde w)\in A: 
			\frac{2\lambda_0}{(1-2\alpha)^s \alpha^{sk}}
		<\int\limits_{-\lambda_0}^{\lambda_0}
			\frac{d\lambda}{(\Iuc^2+(\Ic+\lambda \Ip)^2)^{s/2}}
		\leq\frac{2\lambda_0}{(1-2\alpha)^s \alpha^{s(k+1)}}\right\}
	\]
	and $k_0\in\set N$ big enough to use later on the lower bound for $\Ip$.
	We have the following upper bounds for the Lebesgue integral
	\begin{equation}
		\label{Lebesgue_integral_upper_bounds}
		\int\limits_{-\lambda_0}^{\lambda_0}
			\frac{d\lambda}{(\Iuc^2+(\Ic+\lambda \Ip)^2)^{s/2}}
		\leq\begin{cases}
			\frac{2\lambda_0}{\Iuc^s} & \text{ general}\\\\
			\frac{2 s}{(s-1)\Iuc^{s-1}\abs{\Ip}} & \text{ general}\\\\
			\frac{2\lambda_0}{(\Iuc^2+(\abs{\Ic}-
				\lambda_0\abs{\Ip})^2)^{s/2}}&\text{ if } 
			\abs{\Ic}\geq(1+\eta)\lambda_0\abs{\Ip}
		\end{cases},
	\end{equation}
	where $\eta>0$.
	
	For this paragraph, we assume that $(\tilde x,\tilde y,\tilde z,\tilde w)\in S_k$.
	Using this and the first general upper bound of
	\eqref{Lebesgue_integral_upper_bounds}, we get
	\[
		\abs{y-\tilde y}\leq\Iuc<(1-2\alpha)\alpha^{k}
	\]
	and the same for $\abs{x-\tilde x}$ and $\abs{z-\tilde z}$.
	Using the second general upper bound of \eqref{Lebesgue_integral_upper_bounds} 
	and the lower bound from Lemma \ref{lemma_I_p_estimations} for $\Ip$, we get
	\[
		\abs{y-\tilde y}^{\rho}
		\leq\frac{\abs{\Ip}}{K_2}\leq C_1\frac{\alpha^{sk}}{\Iuc^{s-1}},
	\]
	with
	\[
		C_1:=\frac{s(1-2\alpha)^s}{(s-1)\lambda_0 K_2}.
	\]
	That means
	\begin{align}\label{improvement_y}
		\abs{y-\tilde y}\leq\left(C_1\alpha^{sk}\right)^{\frac{1}{s-1+\rho}}
	\end{align}
	and
	\[
		\abs{y-\tilde y}^{\frac{\rho}{s-1}}\cdot\abs{x-\tilde x}
		\leq\left(C_1\alpha^{sk}\right)^{\frac{1}{s-1}}.
	\]
	We can conclude that
	\begin{align}\label{improvement_xz}
		\abs{x-\tilde x}\leq
		\begin{cases}
			(1-2\alpha)\alpha^k &\textrm{ for } 
				0\leq\abs{y-\tilde y}\leq C_2\alpha^{\frac{k}{\rho}}\\\\
			\left(\frac{C_1\alpha^{sk}}{\abs{y-\tilde y}^\rho}\right)^{\frac{1}{s-1}}
				&\textrm{ for } C_2\alpha^{\frac{k}{\rho}}
			\leq\abs{y-\tilde y}
			\leq\left(C_1\alpha^{sk}\right)^{\frac{1}{s-1+\rho}}
		\end{cases},
	\end{align}
	with 
	\[
		C_2:=\left(\frac{C_1}{(1-2\alpha)^{s-1}}\right)^\frac{1}{\rho}.
	\]
	The same result is true for $\abs{z-\tilde z}$.
	Now, what is the upper bound for $\abs{w-\tilde w}$?
	We have
	\[
		\abs{w-\tilde w}\leq\abs{\Ic}+\abs{\Delta_g}
	\]
	and
	\[
		\abs{\Ic}\leq
		\begin{cases}
		(1+\eta)\lambda_0 K_1\abs{y-\tilde y}^{\rho}&\textrm{ for }
				\abs{\Ic}<(1+\eta)\lambda_0\abs{\Ip}\\\\
			\frac{1+\eta}{\eta}(1-2\alpha)\alpha^{k}&\textrm{ for }
				\abs{\Ic}\geq(1+\eta)\lambda_0\abs{\Ip}
		\end{cases},
	\]
	applying the third upper bound of \eqref{Lebesgue_integral_upper_bounds} and
	\[
		\abs{\Ic}-\lambda_0\abs{\Ip}\geq\abs{\Ic}-\frac{\abs{\Ic}}{1+\eta}=
			\abs{\Ic}\frac{\eta}{1+\eta}
	\]
	for the second inequality.
	Using the estimate of $\Delta_g$ from the proof of Proposition 
	\ref{proposition_conjugation}, we obtain
	\[
		\abs{w-\tilde w}\leq C_3\alpha^k + C_4 \abs{y-\tilde y}^\rho,
	\]
	with
	\[
		C_3:=\left(\frac{1+\eta}{\eta}+\frac{\Lip g}{1-\beta}\right)(1-2\alpha)
	\]
	and
	\[
		C_4:=(1+\eta)\lambda_0 K_1+\frac{\Lip g}{(1-2\alpha)^\rho}
			\left(\frac{1}{\alpha(\beta-\alpha)}+\frac{2}{(1-\beta)(1-2\alpha)}\right).
	\]
	That means
	\begin{align}\label{improvement_w}
		\abs{w-\tilde w}\leq
		\begin{cases}
			(C_3+C_4C_2^\rho)\alpha^k &\textrm{ for } 
				0\leq\abs{y-\tilde y}\leq C_2\alpha^{\frac{k}{\rho}}\\\\
			\left(\frac{C_3}{C_2^\rho}+C_4\right)\abs{y-\tilde y}^{\rho}
				&\textrm{ for } C_2\alpha^{\frac{k}{\rho}}\leq\abs{y-\tilde y}
			\leq\left(C_1\alpha^{sk}\right)^{\frac{1}{s-1+\rho}}
		\end{cases}.
	\end{align}
	
	With these upper bounds we can further estimate \eqref{main_integral_first_estimate},
	\[
		C_{I_0}+\sum\limits_{k=k_0}^{\infty}
			\frac{2\lambda_0}{(1-2\alpha)^s \alpha^{s(k+1)}}\muuc(S_k)
		\leq C_{I_0}+\sum\limits_{k=k_0}^{\infty}
			\frac{2\lambda_0}{(1-2\alpha)^s\alpha^{s(k+1)}}\muuc(B_k)
	\]
	where
	\begin{multline*}
		S_k\subset B_k:=\big\{(\tilde x,\tilde y,\tilde z,\tilde w)\in A:		
			\eqref{improvement_y}\textrm{ for }\abs{y-\tilde y},
			\eqref{improvement_xz}\textrm{ for }\abs{x-\tilde x},\abs{z-\tilde z}\\
			\textrm{ and }\eqref{improvement_w}\textrm{ for }\abs{w-\tilde w}\big\}.
	\end{multline*}
	Set $\underline\alpha_y^k:=C_2\alpha^{\frac{k}{\rho}}$,
	$\overline\alpha_y^k:=\left(C_1\alpha^{sk}\right)^{\frac{1}{s-1+\rho}}$ and
	let $\underline\alpha_x^k$, $\underline\alpha_z^k$, $\underline\alpha_w^k$ be
	the	upper bounds for $\abs{x-\tilde x}$, $\abs{z-\tilde z}$, $\abs{w-\tilde w}$
	applicable in the rage $0\leq\abs{y-\tilde y}\leq\underline\alpha_y^k$ and let
	$\overline\alpha_x^k$, $\overline\alpha_z^k$, $\overline\alpha_w^k$ be the
	upper bounds applicable in the rage
	$\underline\alpha_y^k\leq\abs{y-\tilde y}\leq\overline\alpha_y^k$.
	We estimate
	\begin{align*}
		\muuc(B_k)=\int\limits_{B_k}d\muuc(\tilde x,\tilde y,\tilde z,\tilde w)
		&\leq\int\limits_{\mathclap{B(y,\underline\alpha_y^k)}}d\nu_\alpha(\tilde y)\;
			\int\limits_{\mathclap{B(w,\underline\alpha_w^k)}}
			d\nu_\beta(\tilde w)\;
			\int\limits_{\mathclap{x-\underline\alpha_x^k}}^
			{\mathclap{x+\underline\alpha_x^k}}d\lambda(\tilde x)\;
			\int\limits_{\mathclap{z-\underline\alpha_z^k}}^
			{\mathclap{z+\underline\alpha_z^k}}d\lambda(\tilde z)\\\\
		&+\int\limits_{\mathclap{\underline\alpha_y^k\leq\abs{y-\tilde y}
			\leq\overline\alpha_y^k}}d\nu_\alpha(\tilde y)\;
			\int\limits_{\mathclap{B(w,\overline\alpha_w^k)}}d\nu_\beta(\tilde w)\;
			\int\limits_{\mathclap{x-\overline\alpha_x^k}}^
			{\mathclap{x+\overline\alpha_x^k}}d\lambda(\tilde x)\;
			\int\limits_{\mathclap{z-\overline\alpha_z^k}}^
			{\mathclap{z+\overline\alpha_z^k}}d\lambda(\tilde z).
	\end{align*}
	Hence,
	\begin{align*}
		\muuc(B_k)\leq 4\underline\alpha_x^k\underline\alpha_z^k
			\int\limits_{\mathclap{B(y,\underline\alpha_y^k)}}d\nu_\alpha(\tilde y)\;
			\int\limits_{\mathclap{B(w,\underline\alpha_w^k)}}d\nu_\beta(\tilde w)\;
		+4\int\limits_{\mathclap{\underline\alpha_y^k\leq\abs{y-\tilde y}
			\leq\overline\alpha_y^k}}\overline\alpha_x^k\overline\alpha_z^k
			d\nu_\alpha(\tilde y)\;
			\int\limits_{\mathclap{B(w,\overline\alpha_w^k)}}d\nu_\beta(\tilde w).
	\end{align*}
	Now, we have for all $y\in A_\alpha$
	\[
		\nu_\alpha(B(y,r))\leq C_\alpha r^{d_\alpha},
	\]
	where $d_\alpha:=-\frac{\log 2}{\log\alpha}$ (this is equivalently true for
	$w\in A_\beta$), see e.g.\ \cite[Theorem 3.1.1]{Barreira2008}.
	Hence,
	\begin{align*}
		\muuc(B_k)&\leq 4(1-2\alpha)^2 C_\alpha C_2^{d_\alpha}C_\beta
			(C_3+C_4C_2^\rho)^{d_\beta}\alpha^{k(2+2d_\beta)}\\
		&+4C_1^{\frac{2}{s-1}}C_\beta
			\left(\frac{C_3}{C_2^\rho}+C_4\right)^{d_\beta}\alpha^{k\frac{2s}{s-1}}
			\int\limits_{\mathclap{\underline\alpha_y^k\leq\abs{y-\tilde y}
			\leq\overline\alpha_y^k}}
			\abs{y-\tilde y}^{\rho\left(d_\beta-\frac{2}{s-1}\right)}
			d\nu_\alpha(\tilde y).
	\end{align*}
	Note that in the following
	\[
		\max\{a,b\}^e:=\max\{a^e,b^e\}.
	\]
	The last integral of the previous inequality can be estimated as follows:
	\begin{align*}
		\int\limits_{\mathclap{\underline\alpha_y^k\leq\abs{y-\tilde y}
			\leq\overline\alpha_y^k}}\abs{y-\tilde y}^
			{\rho\left(d_\beta-\frac{2}{s-1}\right)}d\nu_\alpha(\tilde y)
		&\leq 2C_\alpha\sum\limits_{i=0}^{M_k-1}
			\max\left\{\overline\alpha_y^k\alpha^{i+1},
			\overline\alpha_y^k\alpha^i\right\}^{\rho\left(d_\beta-\frac{2}{s-1}\right)}
			(\overline\alpha_y^k\alpha^i)^{d_\alpha}\\
		&\leq2C_\alpha\alpha^{-\rho\abs{d_\beta-\frac{2}{s-1}}}
			\sum\limits_{i=0}^{M_k-1}\left(\overline\alpha_y^k\alpha^i\right)^
			{\rho\left(d_\beta-\frac{2}{s-1}\right)+d_\alpha}
	\end{align*}
	and $M_k$ is determined by
	\[
		\overline\alpha_y^k\alpha^{M_k}\leq\underline\alpha_y^k
		<\overline\alpha_y^k\alpha^{M_k-1}.
	\]
	Thus,
	\[
		M_k<\left(\frac{1}{\log\alpha}
			\log\frac{C_2}{C_5}+\frac{1}{\rho}+1\right)k
	\]
	and
	\begin{align*}
		\sum\limits_{i=0}^{M_k-1}\left(\overline\alpha_y^k\alpha^i\right)^
		{\rho\left(d_\beta-\frac{2}{s-1}\right)+d_\alpha}
		&\leq\max\left\{\overline\alpha_y^k\alpha^{M_k-1},
			\overline\alpha_y^k\right\}^
			{\rho\left(d_\beta-\frac{2}{s-1}\right)+d_\alpha}M_k\\
		&\leq C_5^{\rho\left(d_\beta-\frac{2}{s-1}\right)+d_\alpha}
			\max\left\{\alpha^{\frac{k}{\rho}},\alpha^{\frac{sk}{s-1+\rho}}\right\}^
			{\rho\left(d_\beta-\frac{2}{s-1}\right)+d_\alpha}M_k,
	\end{align*}
	with
	\[
		C_5^e:=\max\left\{C_2,C_1^{\frac{1}{s-1+\rho}}\right\}^e.
	\]
	Finally, we get
	\begin{multline*}
		\frac{2\lambda_0}{(1-2\alpha)^s\alpha^{s(k+1)}}\muuc(B_k)
		\leq C_{L_1}\alpha^{k(2+2d_\beta-s)}\\
		+k\,C_{L_2}\max\left\{\alpha^{k(2+2d_\beta-s)},
			\alpha^{k\frac{s}{s-1+\rho}(3+2d_\alpha-\rho-s)}\right\},
	\end{multline*}
	with
	\[
		C_{L_1}:=\frac{8\lambda_0(1-2\alpha)^2 C_\alpha C_2^{d_\alpha}C_\beta
			(C_3+C_4C_2^\rho)^{d_\beta}}{(1-2\alpha)^s\alpha^s}
	\]
	and
	\[
		C_{L_2}:=\frac{16\lambda_0C_1^{\frac{2}{s-1}}
			C_\beta\left(\frac{C_3}{C_2^\rho}+
			C_4\right)^{d_\beta}C_\alpha\alpha^{-\rho\abs{d_\beta-\frac{2}{s-1}}}
			C_5^{\rho\left(d_\beta-\frac{2}{s-1}\right)+d_\alpha}
			\left(\frac{1}{\log\alpha}\log\frac{C_2}{C_5}+
			\frac{1}{\rho}+1\right)}{(1-2\alpha)^s\alpha^s}.
	\]
	This proves the desired inequality.
	Note that $C_{I_0}$, $C_{L_1}$ and $C_{L_2}$ are independent of $v$.
\end{proof}

For the definition of $D_L$, see Corollary \ref{corollary_D_L}.

\begin{proposition}
	\label{proposition_upper_bound_box_counting_dimension}
	For $\alpha<\beta$ and for all $g\in\Cone$ we have $\overline D_B(\Ac)\leq D_L$.
\end{proposition}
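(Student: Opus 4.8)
The plan is to cover $\Ac$ directly at scale $r\to0$, using the description of $\Ac$ from Proposition \ref{proposition_conjugation}. Writing $\Phi(x,y):=\sum_{i=0}^{\infty}\beta^i g\bigl(B_\alpha^{-i-1}(x,y)\bigr)$, that proposition identifies
\[
	\Ac=\bigl\{(x,y,z,w+\Phi(x,y)):(x,y,z,w)\in\Auc\bigr\},
\]
i.e.\ $\Ac$ is the product $\Auc=[0,1)\times A_\alpha\times[0,1)\times A_\beta$ sheared in the $w$-direction by an amount depending only on $(x,y)$. The single analytic input I would need is the oscillation bound for $\Phi$ already contained in the proof of Proposition \ref{proposition_conjugation} (the estimate of $\Delta_g$ in the case $\alpha<\beta$): there is $C_g>0$ with
\[
	\bigl|\Phi(x,y)-\Phi(\tilde x,\tilde y)\bigr|\le C_g\bigl(|x-\tilde x|+|y-\tilde y|^{\rho}\bigr),\qquad \rho=\tfrac{\log\beta}{\log\alpha},
\]
for all $(x,y),(\tilde x,\tilde y)\in[0,1)\times A_\alpha$; this follows by combining the three cases treated there, absorbing $|y-\tilde y|$ into $|y-\tilde y|^{\rho}$ (valid since $|y-\tilde y|\le1$) in the case where $|y-\tilde y|$ is not small.

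First I would fix a small scale $r>0$ and two auxiliary scales $\delta_y,\delta_w\in(0,1)$, to be specified later as powers of $r$. Cover $[0,1)$ in each of the $x$- and $z$-coordinates by $\lceil1/r\rceil$ intervals of length $r$, cover $A_\alpha$ by $N(A_\alpha,\delta_y)$ intervals of length $\delta_y$ and $A_\beta$ by $N(A_\beta,\delta_w)$ intervals of length $\delta_w$, where by the self-similar construction of these Cantor sets $N(A_\alpha,\delta)\le C\delta^{-d_\alpha}$ and $N(A_\beta,\delta)\le C\delta^{-d_\beta}$ with $d_\alpha=-\log2/\log\alpha$ and $d_\beta=-\log2/\log\beta$. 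Products of these intervals cover $\Auc$ by boxes of shape $r\times\delta_y\times r\times\delta_w$, and the shear $(x,y,z,w)\mapsto(x,y,z,w+\Phi(x,y))$ carries each such box into a box of shape $r\times\delta_y\times r\times W$ with $W\le\delta_w+C_g(r+\delta_y^{\rho})$, by the oscillation bound. Re-covering each image box by sets of diameter $\le r$ needs $\lesssim\max\{\delta_y/r,1\}\cdot\max\{W/r,1\}$ of them, so
\[
	N(\Ac,r)\ \lesssim\ r^{-2}\,\delta_y^{-d_\alpha}\,\delta_w^{-d_\beta}\,\max\{\delta_y/r,1\}\,\max\bigl\{(\delta_w+r+\delta_y^{\rho})/r,\,1\bigr\}.
\]

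Then I would make two choices of $(\delta_y,\delta_w)$ according to the size of $\beta$. Taking $\delta_w=r$ and $\delta_y=r^{1/\rho}$ (note $1/\rho>1$, so $\delta_y<r$ and $\delta_y^{\rho}=r$, hence both maxima above are $1$), and using $d_\alpha/\rho=d_\beta$, gives $N(\Ac,r)\lesssim r^{-(2+2d_\beta)}$. Taking instead $\delta_y=r$ together with the \emph{coarser} scale $\delta_w=r^{\rho}$ (so $N(A_\beta,\delta_w)\le Cr^{-\rho d_\beta}=Cr^{-d_\alpha}$, the $\delta_y/r$-maximum is $1$ and the $W/r$-maximum is $\lesssim r^{\rho-1}$) gives $N(\Ac,r)\lesssim r^{-(3+2d_\alpha-\rho)}$. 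Letting $r\to0$ — it suffices to take $r=\alpha^k$, since $N(\Ac,\cdot)$ is monotone and one interpolates in the standard way — yields $\overline D_B(\Ac)\le\min\{2+2d_\beta,\ 3+2d_\alpha-\rho\}$, and by the explicit formula for $D_L=D_L(\muc)$ recalled in Corollary \ref{corollary_D_L} this minimum is exactly $D_L$: it equals $2+2d_\beta$ for $\beta\le\frac14$ and $3+2d_\alpha-\rho$ for $\beta\ge\frac14$, the competing inequality in each regime reducing via $d_\alpha=\rho d_\beta$ to the elementary fact that $d_\beta\lessgtr\frac12$ according as $\beta\lessgtr\frac14$.

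I expect the crux to be this choice of scales rather than any single estimate. The naive cover $\delta_y=\delta_w=r$ — viewing $\Ac$ as a bundle over $[0,1)^2\times A_\alpha$ with Cantor fibres — overcounts the $y$-direction, because resolving $y$ at scale $r$ and then separately paying a factor $r^{\rho-1}$ for the $y$-oscillation of the shear $\Phi$ is wasteful; the remedy is to balance the shear term $\delta_y^{\rho}$ against a deliberately coarsened $\delta_w$. This is the box-counting analogue of the two-regime estimate for $\muuc(B_k)$ in the proof of Proposition \ref{main_proposition}, and it is why the two exponents $2+2d_\beta$ and $3+2d_\alpha-\rho$ reappear here.
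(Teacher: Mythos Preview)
Your argument is correct. The first choice of scales, $\delta_y=r^{1/\rho}$ and $\delta_w=r$, is exactly the paper's approach for the bound $2+2d_\beta$: the paper covers $\Auc$ by rectangles $B(x,\varepsilon)\times B(y,\varepsilon^{1/\rho})\times B(z,\varepsilon)\times B(w,\varepsilon)$ and uses the anisotropic H\"older estimate for $\hc$ to push them forward into $C\varepsilon$-balls in $\Ac$.

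For the second bound $3+2d_\alpha-\rho$, however, you take a genuinely different route. The paper abandons the static description via $\hc$ and instead argues dynamically: it partitions a neighbourhood of $\Ac$ into $2^{2n}(\beta^{-n}+1)$ thin boxes $R_n^{lm}$, pushes each forward by $\Bc^n$, bounds the resulting images by boxes of size $\beta^n\times\alpha^n\times 1\times C\beta^n$ directly from the explicit iterates, and re-covers at scale $\alpha^n$. Your approach stays entirely within the static covering framework and obtains the same exponent simply by \emph{coarsening} the $w$-scale to $\delta_w=r^{\rho}$ while keeping $\delta_y=r$, so that the shear oscillation $\delta_y^{\rho}=r^{\rho}$ is balanced against $\delta_w$ and the count $N(A_\beta,\delta_w)\le Cr^{-\rho d_\beta}=Cr^{-d_\alpha}$ matches the $y$-count. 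This is more unified --- both regimes fall out of one covering scheme by tuning $(\delta_y,\delta_w)$ --- and it uses nothing beyond the H\"older regularity of $\Phi$ already extracted in Proposition~\ref{proposition_conjugation}. The paper's dynamical argument, on the other hand, does not rely on the conjugacy at all and would transfer more readily to settings where an explicit semiconjugacy with controlled regularity is unavailable.
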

\begin{proof}
	We define for
	$(x,y,z,w)\in\Auc$ and $\varepsilon>0$
	\[
		R_{x,y,z,w,\varepsilon}:=B(x,\varepsilon)\times B(y,\varepsilon^{1/\rho})
			\times B(z,\varepsilon)\times B(w,\varepsilon).
	\]
	Using the estimate at the end of the proof of Proposition
	\ref{proposition_conjugation}, we have 
	\[
		\hc(R_{x,y,z,w,\varepsilon}\cap A)\subset B(\hc(x,y,z,w),C\varepsilon),
	\]
	with $C>0$.
	Hence,
	\begin{align*}
		\overline D_B(\Ac)&=\limsup\limits_{\varepsilon\to 0}
			\frac{\log N(\Ac,C\varepsilon)}{-\log(C\varepsilon)}\\
		&\leq\limsup\limits_{\varepsilon\to 0}\frac{\log (N([0,1),\varepsilon)
			N(A_\alpha,\varepsilon^{1/\rho})N([0,1),\varepsilon)
			N(A_\beta,\varepsilon))}{-\log(C\varepsilon)}
	\end{align*}
	because	$\Ac=\hc(\Auc)$ and $\Auc$ can be covered by finitely many 
	$R_{x,y,z,w,\varepsilon}$.
	Therefore,
	\[
		\overline D_B(\Ac)\leq 2+\frac{1}{\rho}D_B(A_\alpha)+D_B(A_\beta)
		=2-2\frac{\log 2}{\log\beta},
	\]
	where $D_B(A_\alpha)=-\frac{\log 2}{\log\alpha}$.
	This gives the desired upper bound for $\beta\leq 1/4$.

	To get the upper bound for $\beta\geq 1/4$ we define
	for $n\in\set N$ and $(l,m)\in\{0,\dots,2^{n}-1\}^2$
	\begin{align*}
		S_n^{lm}:=\left[\frac{l}{2^n},\frac{l+1}{2^n}\right)\times[0,1]\times
			\left[\frac{m}{2^n},\frac{m+1}{2^n}\right)\times
			\left[-\frac{\supnorm g}{1-\beta},1+\frac{\supnorm g}{1-\beta}\right]
	\end{align*}
	and with $R_n^{lm}\subset S_n^{lm}$ we denote a subbox of $S_n^{lm}$ 
	with the length $(\beta/2)^n$ in the $x$-direction and the same length in the
	$y$-,$z$- and $w$-direction.
	Note that each $S_n^{lm}$ can be covered by $1/\beta^n+1$ boxes of the form
	$R_n^{lm}$ and we need $2^{2n}$ boxes of the form $S_n^{lm}$ to cover the
	whole attractor $\Ac$.
	That means we can cover the attractor by $2^{2n}(1/\beta^n+1)$ boxes of the
	form $R_n^{lm}$.
	Now, consider the image of all these boxes under $\Bc^n$ and observe that
	the attractor is contained in the union of these images.
	
	Recall the notation introduced at the beginning of the proof of	Proposition
	\ref{proposition_conjugation}.
	If we consider two arbitrary points
	$(x,y,z,w),(\tilde x,\tilde y,\tilde z,\tilde w)\in R_n^{lm}$, then
	$(x)_i=(\tilde x)_i, (z)_i=(\tilde z)_i$ for $i \in\{0,\dots,n-1\}$ and
	this means
	\[
		\abs{x_n-\tilde x_n}\leq 2^n\abs{x-\tilde x}\leq\beta^n,\quad
		\abs{y_n-\tilde y_n}\leq\alpha^n,\quad
		\abs{z_n-\tilde z_n}\leq 1
	\]
	and
	\[
		\abs{w_n-\tilde w_n}\leq\beta^n\abs{w-\tilde w}+
		\Lip{g}\sum\limits_{i=0}^{n-1}\beta^{n-1-i}\left[2^i\abs{x-\tilde x}+
			\alpha^i\abs{y-\tilde y}\right]
		\leq C\beta^n\
	\]
	with
	\[
		C:=1+2\frac{\supnorm g}{1-\beta}
		+\Lip{g}\left(\frac{1}{2(1-\beta)}+\frac{1}{\beta-\alpha}\right).
	\]
	Therefore, the image of $R_n^{lm}$ under $\Bc^n$ is contained in a box
	with the lengths $\beta^n$, $\alpha^n$, $1$ and $C\beta^n$.
	Now, we cover the attractor $\Ac$ by cubes of the length $\alpha^n$ and this
	number is bounded above by
	\[
		2^{2n}\cdot\left(\frac{1}{\beta^n}+1\right)\cdot
			\left(\frac{\beta^n}{\alpha^n}+1\right)\cdot 1\cdot
			\left(\frac{1}{\alpha^n}+1\right)\cdot
			\left(\frac{C\beta^n}{\alpha^n}+1\right)
		\leq \tilde C\left(\frac{2^2\beta}{\alpha^3}\right)^n.
	\]
	Now, for $\varepsilon>0$ choose $n(\varepsilon)\in\set N$ such that
	$\alpha^{n(\varepsilon)}\leq\varepsilon/2<\alpha^{n(\varepsilon)-1}$.
	Then
	\[
		\frac{N(\Ac,\varepsilon)}{-\log\frac{\varepsilon}{2}}
		\leq\frac{\log\tilde C\left(\frac{2^2\beta}{\alpha^3}\right)^{n(\varepsilon)}}
			{\log\alpha^{n(\varepsilon)-1}}
	\]
	and hence for $\varepsilon\to 0$
	\[
		\overline D_B(\Ac)\leq\frac{\log\frac{2^2\beta}{\alpha^3}}{\log\alpha}
		=3-2\frac{\log 2}{\log\alpha}-\frac{\log\beta}{\log\alpha}.
	\]
	This gives the desired upper bound for $\beta\geq 1/4$.
	Summing-up, we get $\overline D_B(\Ac)\leq D_L$.
\end{proof}

Note that the second upper bound for the box-counting dimension in the last
proposition could be also derived by more general methods using the proofs in
\cite{Chen1993} and \cite{ChepyzhovIlyin2001}, but with the stronger requirement
that the derivative of $g$ is uniformly continuous.

Now, we have all what we need to prove the main theorem.

\begin{theorem}
	\label{prevelant_result}
	For $\alpha<\beta$ and for a prevalent set of functions $g\in\Cone$ we have
	that $\mu_g$ is exact dimensional with $d(\muc)=D_1(\muc)=D_L$.
\end{theorem}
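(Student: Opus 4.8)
The plan is to prove that the exceptional set $E$ from \eqref{shy_set} is shy; its prevalent complement will then consist of functions $g$ for which $\muc$ is exact dimensional with $d(\muc)=D_L$, and hence $D_1(\muc)=D_L$. The two tools that drive this are the box-counting bound of Proposition \ref{proposition_upper_bound_box_counting_dimension}, which controls the pointwise dimension from above, and the $\lambda$-averaged potential estimate of Proposition \ref{main_proposition}, which, after integrating in the space variable, controls it from below for a prevalent set of $g$.

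First I would verify that $g\notin E$ already forces the conclusion. If $g\notin E$, then for every $0\le s<D_L$ the set $\{v\in\Ac:\varphi_s(\muc,v)=\infty\}$ is $\muc$-null. Fixing a sequence $s_n\uparrow D_L$ with $s_n>1$ (recall $D_L>D_1(\muuc)>2$ in the regime $\alpha<\beta<\frac12$) and intersecting the corresponding conull sets, Theorem \ref{pointwise_dimension_equal_sup_s_potential} gives $\underline d(\muc,v)\ge\sup_n s_n=D_L$ for \mucae{$v$}. On the other hand, Proposition \ref{proposition_relation_pointwise_boxcounting_dimension} together with Proposition \ref{proposition_upper_bound_box_counting_dimension} gives $\overline d(\muc,v)\le\overline D_B(\Ac)\le D_L$ for \mucae{$v$}. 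Since $\underline d(\muc,v)\le\overline d(\muc,v)$ always, we conclude $\underline d(\muc,v)=\overline d(\muc,v)=D_L$ almost everywhere, that is, $\muc$ is exact dimensional with $d(\muc)=D_L$; Theorem \ref{theorem_relation_pointwise_information_dimension} then yields $D_1(\muc)=D_L$.

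It remains to show $E$ is shy, for which I would embed it into the Borel set
\[
  E':=\bigcup_{n\in\set N}\Bigl\{g\in\Cone:\int_{\Ac}\varphi_{s_n}(\muc,v)\,d\muc(v)=\infty\Bigr\},
\]
with $(s_n)$ as above. Using Proposition \ref{proposition_conjugation} to rewrite $\int_{\Ac}\varphi_s(\muc,v)\,d\muc(v)=\iint\abs{\hc(u)-\hc(\tilde u)}^{-s}\,d\muuc(u)\,d\muuc(\tilde u)$ and observing that $g\mapsto\hc(u)$ is (Lipschitz, hence) continuous on $\Cone$ for each fixed $u$, Fatou's lemma shows that this double integral is lower semicontinuous in $g$, so each set in the union is a $G_\delta$ and $E'$ is Borel. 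Moreover $E\subseteq E'$: a witness $(s,W)$ for $g\in E$ yields $\int_{\Ac}\varphi_{s_n}(\muc,v)\,d\muc(v)=\infty$ for any $s_n\in[s,D_L)$, since $\Ac$ is bounded and hence $\varphi_s(\muc,v)=\infty$ implies $\varphi_{s'}(\muc,v)=\infty$ for every $s'\ge s$. Now I would take Lebesgue measure on the probe $P$ from \eqref{definition_probe}, restricted to the compact set $\{\lambda p:\abs\lambda\le 1\}$, which has positive finite measure. Transversality means that for every $g\in\Cone$ the set $\{\lambda\in\set R:g+\lambda p\in E'\}$ is Lebesgue-null. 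Fixing $n$, writing $s=s_n$, and choosing $\lambda_0>0$, Proposition \ref{main_proposition} bounds $\int_{-\lambda_0}^{\lambda_0}\varphi_s(\mup,\hcp(v))\,d\lambda$ by a finite constant independent of $v\in\Auc$; integrating this against $\muuc(v)$ and applying Tonelli's theorem (the integrand is nonnegative and jointly measurable) gives
\[
  \int_{-\lambda_0}^{\lambda_0}\Bigl(\int_{\Auc}\varphi_s(\mup,\hcp(v))\,d\muuc(v)\Bigr)\,d\lambda<\infty .
\]
Since $\mup=\hcp(\muuc)$ by Proposition \ref{proposition_conjugation}, the inner integral equals $\int_{\Acp}\varphi_s(\mup,\tilde v)\,d\mup(\tilde v)$, which is therefore finite for a.e.\ $\lambda\in[-\lambda_0,\lambda_0]$; letting $\lambda_0\to\infty$ and taking the union over $n$ shows $\{\lambda:g+\lambda p\in E'\}$ is Lebesgue-null. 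Hence $E'$, and with it $E$, is shy, so its complement is prevalent and the theorem follows from the second paragraph.

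The main obstacle I anticipate is not the dimension-theoretic squeeze but the measurability bookkeeping required by the prevalence framework: the enlargement $E'$ must be large enough to be genuinely Borel (this is exactly what the lower-semicontinuity argument buys) yet small enough to contain $E$, and one must take care that Tonelli genuinely applies and that the quantifier ``for all $s<D_L$'' is resolved through a countable sequence together with the monotonicity of $\varphi_s$ on the bounded attractor. The analytic core, namely the estimate in Proposition \ref{main_proposition} together with the box-counting bound of Proposition \ref{proposition_upper_bound_box_counting_dimension}, is already in hand, so everything else is assembly.
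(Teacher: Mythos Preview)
Your proposal is correct and follows essentially the same route as the paper: the upper bound on the pointwise dimension comes from Propositions \ref{proposition_relation_pointwise_boxcounting_dimension} and \ref{proposition_upper_bound_box_counting_dimension}, the lower bound from embedding $E$ into the Borel set $E'$ defined via the integrated $s$-potential, proving $E'$ is Borel through lower semicontinuity and Fatou, and establishing transversality of Lebesgue measure on $P$ by integrating the uniform-in-$v$ bound of Proposition \ref{main_proposition} against $\muuc$ and applying Tonelli. The only cosmetic difference is the order of presentation (you first verify that $g\notin E$ gives the conclusion and then prove shyness, whereas the paper interleaves these), and you state the uniformity in $v$ of the bound in Proposition \ref{main_proposition} explicitly, which the paper records at the end of that proposition's proof.
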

\begin{proof}
	By using Proposition \ref{proposition_relation_pointwise_boxcounting_dimension}
	and Proposition	\ref{proposition_upper_bound_box_counting_dimension}, we get
	for all $g\in\Cone$ that $\overline d(\muc,v)\leq\overline D_B(\Ac)\leq D_L$
	for \mucae{$v$}.
	Hence, it remains to show that for a prevalent set of functions $g\in\Cone$
	we have $\underline d(\muc,v)\geq D_L$ for \mucae{$v$}.
	Recall that this is equivalent to showing that the exceptional set
	$E$ defined in \eqref{shy_set} is shy.
	Note that $E$ is a subset of
	\[
		E':=\Bigg\{g\in\Cone:\textnormal{ there is a }0\leq s<D_L
			\textnormal{ such that }\int\limits_{\Ac}
			\varphi_s(\muc,v)d\muc(v)=\infty\Bigg\}.
	\]
	We claim that Lebesgue measure supported on $P$, see
	\eqref{definition_probe}, is transverse to $E'$, i.e.\ $E'$ and therefore
	$E$ are shy.

	To show that $E'$ is a Borel set, we define the sets
	\[
		E_s':=\Bigg\{g\in\Cone:\int\limits_{\Ac}\varphi_s(\muc,v)d\muc(v)=\infty\Bigg\}.
	\]
	Note that $E_s'\subseteq E_r'$ for $0\leq s\leq r<\infty$.
	For a fixed $s\in[0,\infty)$ we will show that the map
	\begin{align}
		\label{exceptional_function}
		g\mapsto\int\limits_{\Ac}\varphi_s(\muc,v)d\muc(v)
		=\int\limits_{\Auc}\varphi_s(\muc,\hc(v))d\muuc(v):\Cone\to [0,\infty]
	\end{align}
	is lower semi-continuous, to prove that each $E_s'$ is a Borel set.
	Recall that
	\[
		\varphi_s(\mu_g,v)
		=\int\limits_{\Auc}\frac{1}{\abs{v-\hc(\tilde v)}^s}d\muuc(\tilde v)
	\]
	and observe that for a fixed $s\in[0,\infty)$ the map
	\[
		(g,v)\mapsto\varphi_s(\muc,v):\Cone\times M^2\to[0,\infty]
	\]
	is lower semi-continuous.
	This is true because for an arbitrary sequence $((g_k,v_k))_{k\in\set N}$ with
	$g_k\to g$ and $v_k\to v$ we have that $h_{g_k}\to h_g$ for $k\to\infty$, since 
	$\Abs{h_{g_k}-h_g}_{\infty}\leq C\Abs{g_k-g}_{\infty}$, $C>0$ and the $g_k$'s 
	convergence uniformly to $g$.
	Thus,
	\[
		\frac{1}{\abs{v-\hc(\tilde v)}^s}
		=\liminf\limits_{k\to\infty}\frac{1}{\abs{v_k-h_{g_k}(\tilde v)}^s},
	\]
	with $\tilde v\in\Auc$ and by using Fatou's lemma, we get
	$\varphi_s(\mu_g,v)\leq\liminf\limits_{k\to\infty}\varphi_s(\mu_{g_k},v_k)$.
	Now, by setting $v_k:=h_{g_k}(v)$ for $v\in\Auc$ and using $v_k\to\hc(v)$ for
	$k\to\infty$ and again Fatou's lemma, we get that \eqref{exceptional_function}
	is lower semi-continuous.
	Therefore, $E_s'$ is a Borel set for fixed $s\in[0,\infty)$.
	Now, consider an arbitrary sequence $(s_k)_{k\in\set N}$ with
	$s_1>1, s_k\nnearrow D_L$.
	We have
	\begin{align}
		\label{exceptional_set_countable_union}
		E'=\bigcup\limits_{0\leq s<D_L}E_s'=
		\bigcup\limits_{k\in\set N}E_{s_k}',
	\end{align}
	i.e.\ $E'$ is a Borel set, too.
	
	Next, using Tonelli's theorem and Proposition \ref{main_proposition},
	\begin{align}
		\label{s_potential_method_Hausdorff}
		\int\limits_{-\lambda_0}^{\lambda_0}\;\int\limits_{\Ac}
			\varphi_s(\mup,v)d\mup(v)d\lambda
		=\int\limits_{\Auc}\;\int\limits_{-\lambda_0}^{\lambda_0}
			\varphi_s(\mup,\hcp(v))d\lambda d\muuc(v)
		<\infty,
	\end{align}
	for all $1<s<D_L$.
	This tells us that the intersection of $E_s'-g$ with a line segment in $P$
	of length $2\lambda_0$ has measure $0$ with respect to Lebesgue measure on $P$.
	Since $\lambda_0 > 0$ is arbitrary, by taking a countable union we conclude
	that the intersection of $E_s'-g$ with $P$ has measure $0$.
	Then since $g$ is arbitrary, Lebesgue measure on $P$ is transverse to $E_s'$.
	Furthermore, from \eqref{exceptional_set_countable_union} it follows that
	the intersection of $E'-g$ with $P$ has measure $0$, and Lebesgue measure
	on $P$ is transverse to $E'$ as	claimed.	
	This means $E$ is shy and this establishes for a prevalent set of $g$'s the
	lower bound for the pointwise dimension.
	Hence, for a prevalent set of functions $g\in\Cone$ we have $d(\muc)=D_L$.
\end{proof}

\section{Discussion}

We demonstrated that the potential theoretic method for the pointwise dimension,
together with the notion of prevalence, could provide a useful approach to tackle
families of systems with physical measures $\mu$ for which the Kaplan-Yorke
equality $D_1(\mu)=D_L(\mu)$ is violated for at least one member of the family.
Our proofs are for system {\eqref{coupled_skinny_bakers_maps} with $f=0$, i.e.,
uni-directional coupling.
Note that the differentiability of the coupling function $g$ in system 
\eqref{coupled_skinny_bakers_maps} is actually only needed for the definition of
the Lyapunov exponents (in all the proofs, excluding Lemma
\ref{lemma_lyapunov_exponents}, we only need that $g$ is bounded and Lipschitz
continuous).
Also note that the uncoupled ($g=0$) and coupled system ($g\neq 0$) have the
same topological and measure-theoretic entropy, namely, $2\log 2$.
Further,  note that in the case of the prevalent result ($\alpha<\beta$) the
exceptional set of coupling functions, i.e.\ the subset of $\Cone$ where the
equality of the information and Lyapunov dimension is violated, is nontrivial.
For example, consider any Lipschitz continuous function $\tilde g:M\to\set R$
such that $g:=\tilde g\circ B_{\alpha}-\beta\cdot\tilde g\in\Cone$, e.g.\
$\tilde g(x,y)=\sin^2(2\pi x)\tanh(y)$.
Then the conjugacy $\hc$ (see Proposition \ref{proposition_conjugation}) from
the uncoupled to the coupled system has in the $w$-component the form
$w+\tilde g(x,y)$, and we get for the information dimension of the coupled
system $D_1(\muc)=D_1(\muuc)< D_L$, where $D_L=D_L(\muuc)=D_L(\muc)$ (see
Corollary \ref{corollary_D_L}).
We do not know whether there exist coupling functions $g$ such that 
$D_1(\muuc)<D_1(\muc)< D_L$, nor are we able to completely classify the 
exceptional cases (in \cite{KaplanMallet-ParetYorke1984} they were able to do
this for coupling functions on the torus with sufficient smoothness).

As already mentioned, by showing that the physical measure $\muc$ is exact
dimensional with $d(\muc)=D_L$ (for $\alpha<\beta$ and a prevalent set of $g$'s)
in Theorem \ref{prevelant_result}, we get that several other dimensions of
$\muc$ coincide with the Lyapunov dimension, too.
This supports the formulation of the Kaplan-Yorke conjecture that can be found in 
\cite[Conjecture 2]{FarmerOttYorke1983}.
Furthermore, using the potential theoretic method for the Hausdorff dimension
\cite[Theorem 4.13]{Falconer2003} and \eqref{s_potential_method_Hausdorff}
together with Proposition \ref{proposition_upper_bound_box_counting_dimension},
we get that the Hausdorff dimension and the box-counting dimension of the
attractor coincide with the Lyapunov dimension in a prevalent sense.
This equality is supposed to be a rather rare phenomenon, according to Conjecture
3 in \cite{FarmerOttYorke1983}, occurring only when every point on the attractor
yields the same Lyapunov exponents.
More commonly, the support  of a physical measure contains points (for example,
unstable periodic orbits) whose Lyapunov exponents are different from that of the
physical measure.
In the system we study, every point for which the Lyapunov exponents exist has
the same exponents as the physical measure, though there is a null set where the
Lyapunov exponents are not defined because of the discontinuity. 
From \eqref{s_potential_method_Hausdorff} we can also conclude the following for
the dimension spectrum of the physical measure, where we use the integral based
definition which is given for a measure $\mu$ by
\[
	D_q(\mu):=\lim\limits_{\varepsilon\to 0}\frac{\log\int
		\mu(B(v,\varepsilon))^{q-1}d\mu(v)}{(q-1)\log\varepsilon},
\]
for $q\in\set R$ and $q\neq 1$, see \cite{HentschelProcaccia1983} (if the limit
does not exist, consider the $\limsup$ and $\liminf$, respectively).
$D_2(\mu)$ is called the correlation dimension of $\mu$.
There is also a potential theoretic method for the correlation dimension (see
\cite[Proposition 2.3]{SauerYorke1997}) and by using 
\eqref{s_potential_method_Hausdorff} again, we get $D_2(\muc)\geq D_L$ in a 
prevalent sense.
Since $D_q(\mu)$ is a non-increasing function of $q$, we get $D_q(\muc)=D_L$ for
$0\leq q\leq 2$ in a prevalent sense.
It is an interesting question whether the whole spectrum equals $D_L$, i.e.,
whether or not the coupling function affects the spectrum and if so would it be
possible to draw conclusions about the coupling function using the spectrum (for
$\alpha\geq\beta$ we have in the uncoupled and coupled case a monofractal).

Our results are related to literature on filtering of chaotic signals, starting
with the observation \cite{BadiiPoliti1986}, \cite{Badiietal1988},
\cite{MitschkeMoellerLange1988} that applying a linear filter to a chaotic signal
could increase the Lyapunov dimension (and presumably other dimensions, by the
Kaplan-Yorke conjecture) of the attractor reconstructed from the signal.
The underlying scenario is that of uni-directional coupling from a chaotic
subsystem (which produces a signal, and which we call the drive system below) to
a contracting subsystem (the filter).
If the drive subsystem is invertible, an attractor of the coupled system can be
thought of as a graph over an attractor of the drive subsystem.
Dimension increase (of the coupled system versus the drive subsystem) can arise
if the graph is nonsmooth, but under appropriate hypotheses the graph is
Lipschitz if the Lyapunov exponents of the filter are all smaller than the
Lyapunov exponents of the drive subsystem \cite{BroomheadHukeMuldoon1992},
\cite{StarkDavies1994}, \cite{PecoraCarroll1996}, \cite{DaviesCampbell1996},
\cite{Stark1997}, \cite{Stark1999}.
In the latter case, the Lyapunov dimension of an attractor of the coupled system
is the same as the Lyapunov dimension of the corresponding attractor of the drive
subsystem, and the information dimensions of the two attractors are the same
too, so the filter does not affect the Kaplan-Yorke conjecture.
On the other hand, the results of \cite{KaplanMallet-ParetYorke1984} show that
the conjecture also holds in a particular scenario where the graph is
non-Lipschitz.

A fundamental difference between our scenario and the filtering scenario is
that because we couple two chaotic subsystems, the Lyapunov and information
dimensions are already different in our uncoupled system.
This inequality persists when the contraction of the drive subsystem is weaker
than the contraction of the subsystem it drives (which is analagous to the case
where the graph is smooth in the filter scenario, except in that case 
\emph{equality} of dimensions persists).

Next, we discuss how the relative size of the contraction rates in our two
subsystems affects the geometry of the attractor of the coupled system.
Recall that we are considering the system \eqref{coupled_skinny_bakers_maps}
with $f=0$, making the $x$-$y$ baker's map the drive system and the
$z$-$w$ baker's map the driven (or ``response'') system.
Above, we compared the cases $\alpha>\beta$ and $\alpha<\beta$ to two cases for
a contracting response system (filter), namely, that the attractor of the
coupled system is a Lipschitz graph versus a non-Lipschitz graph, but the
geometry is fundamentally different in our scenario.
Algebraically, we have characterized the difference as follows.
For $\alpha>\beta$, by Proposition \ref{proposition_conjugation} 
there exists a bi-Lipschitz conjugacy between the attractor of the uncoupled
system and the attractor of the coupled system which implies that the physical
measures on the two attractors have the same information dimension.
For $\alpha<\beta$ and a prevalent set of coupling functions, by Theorem
\ref{prevelant_result} the information dimension of the physical measure is
strictly greater for the coupled system than for the uncoupled system, which
denies the existence of a bi-Lipschitz conjugacy.
Geometrically, we can interpret  the difference as follows, considering a
$w$-$y$ cross-section of the attractor as in Figure \ref{sample_attractor}.
With no coupling ($f=g=0$), the cross-section is the Cartesian product of
two Cantor sets; coupling with nonzero $g$ shears this product.
When $\alpha>\beta$, we argue below that the amount of shear is uniformly
bounded at all scales, whereas for $\alpha<\beta$, the shear can grow even
stronger at smaller scales.
This is related to the fact that the strongly and weakly stable directions at a
given point on the attractor are as shown in Figure \ref{stable_directions};
in particular, the $w$-direction is the strongly stable direction for
$\alpha>\beta$ and the weakly stable direction for $\alpha<\beta$ (see the proof
of Lemma \ref{lemma_lyapunov_exponents} and the comment that follows).
The local dynamics consists of the composition of a shear and a contraction,
both of which preserve the $w$-direction.
The amount of shear is position-dependent but is bounded in a single iteration.
When $\alpha>\beta$, the contraction reduces the shear (due to the stronger
contraction in the $w$-direction), and implies that the cumulative amount of
shear over multiple iterations remains bounded.
When $\alpha<\beta$, the contraction amplifies the shear and allows the
cumulative shear to become unbounded as the number of iterations increases.

Now, coming back to the question that we posed in the introduction: if the
uncoupled system violates the Kaplan-Yorke equality, does coupling typically 
restore equality?
In the case of uni-directional coupling, the answer depends on the direction of
the coupling (but not the size), according to our main result.
We also conjectured in the introduction that for system 
\eqref{coupled_skinny_bakers_maps} the Kaplan-Yorke equality holds in a
prevalent sense in the case of bi-directional coupling.
This would answer the question we just posed in the positive, i.e., in the case
of bi-directional coupling the Kaplan-Yorke equality would prevalently hold even
if the uncoupled system violates it.
We remark that two proximate physical systems are likely to
have at least a small bi-directional coupling, even if the primary interaction is
one-way.
On the other hand, if the coupling in a given direction is small enough compared
to the scales of interest, then at these scales the dynamics and their
dimensionality may be indistinguishable from the case of zero coupling.

\begin{figure}[t]
	\includegraphics[width=1.0\textwidth]{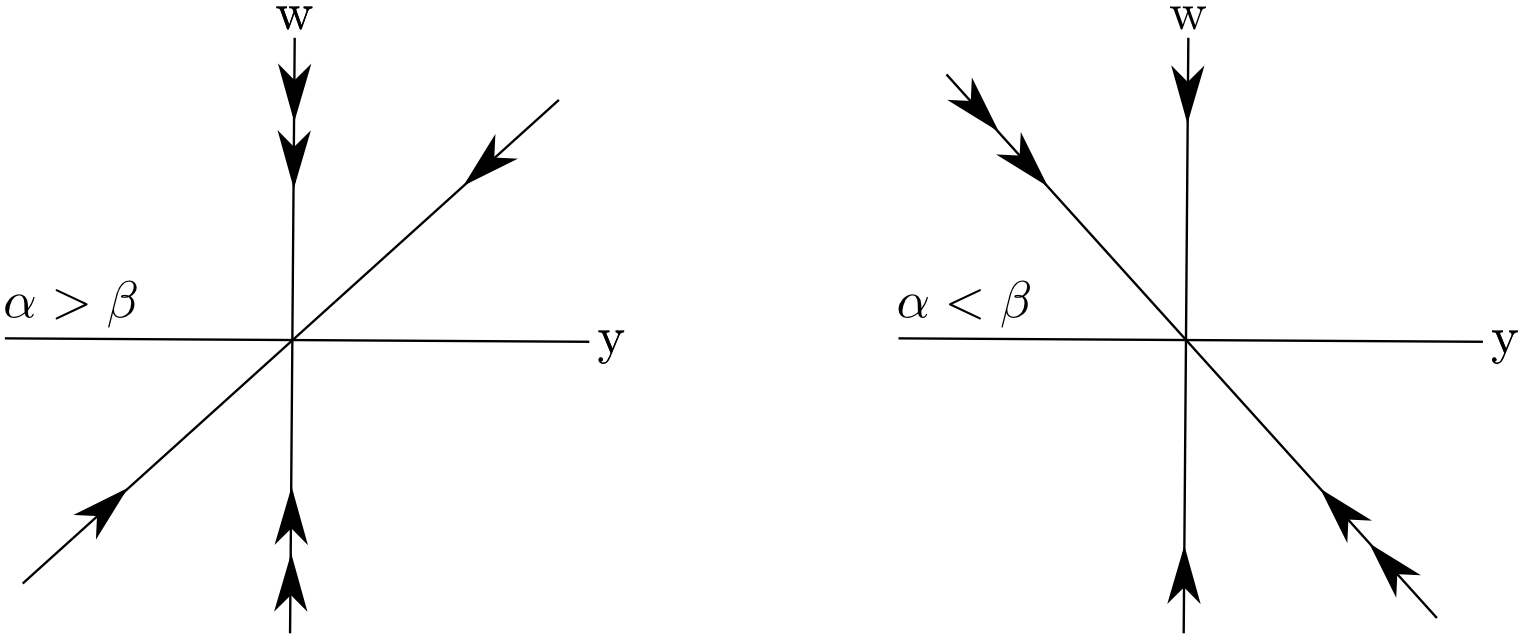}
	\caption{Sketch of the strongly and weakly stable directions at a given point
		for system \eqref{coupled_skinny_bakers_maps} with $f=0$ for the cases
		$\alpha>\beta$ and $\alpha<\beta$.}
	\label{stable_directions}
\end{figure}

The results of this article suggest that for uni-directionally coupled
(skew-product) systems where the Kaplan-Yorke equality is robustly violated there
should be still a meaningful relation between the information dimension of the
physical measure and the Lyapunov exponents.
However, the relation must distinguish between the Lyapunov exponents of the
drive system (the ``base'', in the language of skew-product systems) and those
associated with the response system (the ``fiber'').
To be precise, let $\mu$ be a physical measure for the combined drive-response
system.
The projection of $\mu$ onto the drive state space is invariant for the drive
system, and its Lyapunov exponents $\lambda_1 \geq \cdots \geq \lambda_k$ are
also Lyapunov exponents of the combined system. 
Let $\chi_1 \geq \cdots \geq \chi_\ell$ be the remaining Lyapunov exponents of
the combined system.
We conjecture that there is a function of the two vectors
$(\lambda_1,\dots,\lambda_k)$ and $(\chi_1,\dots,\chi_\ell)$ that typically
coincides with $D_1(\mu)$, and in this sense extends the Kaplan-Yorke conjecture
to uni-directionally coupled systems.
Our results indicate that depending on the relative ordering of the $\lambda_i$'s
and the $\chi_j$'s, this function may coincide with the Lyapunov dimension of the
combined set of exponents, or may coincide with the sum of the Lyapunov dimensions
of the two sets of exponents considered separately.
There may be other possibilities as well in higher dimensions.
More generally, one can also consider systems with more than two subsystems and
incomplete (not all-to-all) coupling.
Also in this context, a very interesting question would be whether certain
dimensions could be a measure for the connectivity of a network of coupled
systems.

It seems that prevalence could be a good notion to define what ``typical'' means
in the Kaplan-Yorke conjecture.
But in the general case of arbitrary maps on manifolds we have no natural linear
structure on the function space and nonlinear notions of prevalence are
still subject of ongoing investigations, see \cite{HuntKaloshin2010}.

\bibliographystyle{alpha}
\bibliography{lit}

\newcommand{\etalchar}[1]{$^{#1}$}
\begin{thebibliography}{KMPY84}

\bibitem[AY84]{AlexanderYorke1984}
J.C. Alexander and J.A. Yorke.
\newblock Fat baker's transformations.
\newblock {\em Ergodic Theory and Dynamical Systems}, 4(1):1--23, 1984.

\bibitem[Bar08]{Barreira2008}
L.~Barreira.
\newblock {\em Dimension and Recurrence in Hyperbolic Dynamics}, volume 272 of
  {\em Progress in Mathematics}.
\newblock Birkhaeuser, 2008.

\bibitem[Bau92]{Bauer1992}
H.~Bauer.
\newblock {\em Maß- und Integrationstheorie}.
\newblock Gruyter, 2nd edition, 1992.

\bibitem[BBD{\etalchar{+}}88]{Badiietal1988}
R.~Badii, G.~Broggi, B.~Derighetti, M.~Ravani, S.~Ciliberto, A.~Politi, and
  M.~A. Rubio.
\newblock Dimension increase in filtered chaotic signals.
\newblock {\em Physical Review Letters}, 60:979--982, 1988.

\bibitem[BGT01]{BarbarouxGerminetTcheremchantsev2001}
J.-M. Barbaroux, F.~Germinet, and S.~Tcheremchantsev.
\newblock Generalized fractal dimensions: equivalences and basic properties.
\newblock {\em Journal des Mathématiques Pures et Appliqués},
  80(10):977--1012, 2001.

\bibitem[BHM92]{BroomheadHukeMuldoon1992}
D.~S. Broomhead, J.~P. Huke, and M.~R. Muldoon.
\newblock Linear filters and non-linear systems.
\newblock {\em Journal of the Royal Statistical Society. Series B
  (Methodological)}, 54(2):373--382, 1992.

\bibitem[BP86]{BadiiPoliti1986}
R.~Badii and A.~Politi.
\newblock {On the Fractal Dimension of Filtered Chaotic Signals}.
\newblock In G.~Mayer-Kress, editor, {\em {Dimensions and Entropies in Chaotic
  Systems}}, pages 67--73. Springer, 1986.

\bibitem[BP02]{BarreiraPesin2002}
L.~Barreira and Ya.B. Pesin.
\newblock {\em Lyapunov Exponents and Smooth Ergodic Theory}, volume~23 of {\em
  University Lecture Series}.
\newblock American Mathematical Society, 2002.

\bibitem[Bro76]{Brown1976}
J.~R. Brown.
\newblock {\em Ergodic theory and topological dynamics}, volume~70 of {\em Pure
  and applied mathematics}.
\newblock Academic Press, 1976.

\bibitem[Che93]{Chen1993}
Z.-M. Chen.
\newblock {A note on Kaplan-Yorke-type estimates on the fractal dimension of
  chaotic attractors}.
\newblock {\em Chaos, Solitons \& Fractals}, 3(5):575--582, 1993.

\bibitem[CI01]{ChepyzhovIlyin2001}
V.V. Chepyzhov and A.A. Ilyin.
\newblock A note on the fractal dimension of attractors of dissipative
  dynamical systems.
\newblock {\em Nonlinear Analysis}, 44(6):811--819, 2001.

\bibitem[Cut91]{Cutler1991}
C.D. Cutler.
\newblock Some results on the behavior and estimation of the fractal dimensions
  of distributions on attractors.
\newblock {\em Journal of Statistical Physics}, 62(3-4):651--708, 1991.

\bibitem[DC96]{DaviesCampbell1996}
M.E. Davies and K.M. Campbell.
\newblock Linear recursive filters and nonlinear dynamics.
\newblock {\em Nonlinearity}, 9(2):487--499, 1996.

\bibitem[Dug89]{Dugundji1966}
J.~Dugundji.
\newblock {\em Topology}.
\newblock Brown, 1989.

\bibitem[Fal97]{Falconer1997}
K.~Falconer.
\newblock {\em Techniques in Fractal Geometry}.
\newblock John Wiley, 1997.

\bibitem[Fal03]{Falconer2003}
K.~Falconer.
\newblock {\em Fractal Geometry}.
\newblock 2nd. John Wiley, 2003.

\bibitem[FKYY83]{FredericksonKaplanYorkeYorke1983}
P.~Frederickson, J.L. Kaplan, E.D. Yorke, and J.A. Yorke.
\newblock The liapunov dimension of strange attractors.
\newblock {\em Journal of Differential Equations}, 49(2):185--207, 1983.

\bibitem[FOY83]{FarmerOttYorke1983}
J.D. Farmer, E.~Ott, and J.A. Yorke.
\newblock The dimension of chaotic attractors.
\newblock {\em Physica D Nonlinear Phenomena}, 7(1-3):153--180, 1983.

\bibitem[HK97]{HuntKaloshin1997}
B.R. Hunt and V.Yu. Kaloshin.
\newblock How projections affect the dimension spectrum of fractal measures.
\newblock {\em Nonlinearity}, 10(5):1031--1046, 1997.

\bibitem[HK10]{HuntKaloshin2010}
B.R. Hunt and V.Yu. Kaloshin.
\newblock Prevalence.
\newblock In H.~Broer, F.~Takens, and B.~Hasselblatt, editors, {\em Handbook of
  Dynamical Systems}, volume~3, pages 43--87. Elsevier Science, 2010.

\bibitem[HP83]{HentschelProcaccia1983}
H.G.E. Hentschel and I.~Procaccia.
\newblock The infinite number of generalized dimensions of fractals and strange
  attractors.
\newblock {\em Physica D: Nonlinear Phenomena}, 8(3):435--444, 1983.

\bibitem[HSY92]{HuntSauerYorke1992}
B.R. Hunt, T.~Sauer, and J.A. Yorke.
\newblock {Prevalence: a translation-invariant “almost every” on
  infinite-dimensional spaces}.
\newblock {\em Bulletin of the American Mathematical Society}, 27(2):217--238,
  1992.

\bibitem[KMPY84]{KaplanMallet-ParetYorke1984}
J.L. Kaplan, J.~Mallet-Paret, and J.A. Yorke.
\newblock {The Lyapunov dimension of a nowhere differentiable attracting
  torus}.
\newblock {\em Ergodic Theory and Dynamical Systems}, 4(2):261--281, 1984.

\bibitem[KP08]{KrantzParks2008}
S.G. Krantz and H.R. Parks.
\newblock {\em Geometric Integration Theory}.
\newblock Cornerstones. Birkhäuser, 2008.

\bibitem[KY79]{KaplanYorke1979}
J.L. Kaplan and J.A. Yorke.
\newblock Chaotic behavior of multidimensional difference equations.
\newblock In {\em Functional Differential Equations and Approximation of Fixed
  Points}, volume 730 of {\em Lecture Notes in Mathematics}, pages 204--227.
  Springer, 1979.

\bibitem[LY88]{LedrappierYoung1988}
F.~Ledrappier and L.-S. Young.
\newblock Dimension formula for random transformations.
\newblock {\em Communications in Mathematical Physics}, 117(4):529--548, 1988.

\bibitem[MML88]{MitschkeMoellerLange1988}
F.~Mitschke, M.~Möller, and W.~Lange.
\newblock Measuring filtered chaotic signals.
\newblock {\em Physical Review A}, 37:4518--4521, 1988.

\bibitem[MR07]{MyjakRudnicki2007}
J.~Myjak and R.~Rudnicki.
\newblock On the information dimensions.
\newblock {\em Bollettino dell unione matematica italiana. Sezione B},
  10(2):357--364, 2007.

\bibitem[OY05]{OttYorke2005}
W.~Ott and J.A. Yorke.
\newblock Prevalence.
\newblock {\em Bulletin of the American Mathematical Society}, 42(3):263--290,
  2005.

\bibitem[PC96]{PecoraCarroll1996}
L.M. Pecora and T.L. Carroll.
\newblock Discontinuous and nondifferentiable functions and dimension increase
  induced by filtering chaotic data.
\newblock {\em Chaos: An Interdisciplinary Journal of Nonlinear Science},
  6(3):432--439, 1996.

\bibitem[SD94]{StarkDavies1994}
J.~Stark and M.~Davies.
\newblock Recursive filters driven by chaotic signals.
\newblock In {\em Exploiting Chaos in Signal Processing, IEE Colloquium on},
  page 5/1–5/16, 1994.

\bibitem[Sta97]{Stark1997}
J.~Stark.
\newblock Invariant graphs for forced systems.
\newblock {\em Physica D: Nonlinear Phenomena}, 109(1-2):163--179, 1997.
\newblock Proceedings of the Workshop on Physics and Dynamics between Chaos,
  Order, and Noise.

\bibitem[Sta99]{Stark1999}
J.~Stark.
\newblock Regularity of invariant graphs for forced systems.
\newblock {\em Ergodic Theory and Dynamical Systems}, 19(1):155--199, 1999.

\bibitem[SY97]{SauerYorke1997}
T.D. Sauer and J.A. Yorke.
\newblock Are the dimensions of a set and its image equal under typical smooth
  functions?
\newblock {\em Ergodic Theory and Dynamical Systems}, 17(4):941--956, 1997.

\bibitem[Wal82]{Walters1982}
P.~Walters.
\newblock {\em An Introduction To Ergodic Theory}, volume~79 of {\em Graduate
  Texts in Mathematics}.
\newblock Spinger, 1982.

\bibitem[You82]{Young1982}
L.-S. Young.
\newblock {Dimension, entropy and Lyapunov exponents}.
\newblock {\em Ergodic Theory and Dynamical Systems}, 2(1):109--124, 1982.

\bibitem[You02]{Young2002}
L.-S. Young.
\newblock {What Are SRB Measures, and Which Dynamical Systems Have Them?}
\newblock {\em Journal of Statistical Physics}, 108(5):733--754, 2002.

\end{thebibliography}

\end{document}